\setlist[itemize]{topsep=0ex,itemsep=0.5ex,parsep=0.4ex}
\setlist[enumerate]{topsep=0ex,itemsep=0.5ex,parsep=0.4ex}
\crefname{lem}{Lemma}{Lemmas}
\crefname{thm}{Theorem}{Theorems}
\crefname{cor}{Corollary}{Corollaries}
\crefname{prop}{Proposition}{Propositions}
\crefname{conj}{Conjecture}{Conjectures}
\crefname{open}{Open Problem}{Open Problems}
\crefname{claim}{Claim}{Claims}
\newcommand{\defn}[1]{\textcolor{green!50!black}{\textit{#1}}}
\newcommand{\mathdefn}[1]{\textcolor{green!50!black}{#1}}
\newcommand{\indeg}{\Delta^-}
\DeclareMathOperator{\att}{att}
\def\NAT@spacechar{~}
\DeclarePairedDelimiter{\ceil}{\lceil}{\rceil}
\renewcommand{\geq}{\geqslant}
\renewcommand{\leq}{\leqslant}
\newcommand{\StrongProd}{\mathbin{\boxtimes}}
\newlength\squareheight
\newcommand\squareslash{\tikz{\draw (0,0) rectangle (\squareheight,\squareheight);\draw(0,0) -- (\squareheight,\squareheight)}}
\DeclareMathOperator{\stw}{stw}
\DeclareMathOperator{\tw}{tw}
\renewcommand{\thefootnote}{\fnsymbol{footnote}}
\theoremstyle{plain}
\newtheorem{thm}[equation]{Theorem}
\newtheorem{lem}[equation]{Lemma}
\newtheorem{cor}[equation]{Corollary}
\newtheorem{prop}[equation]{Proposition}
\newtheorem{obs}[equation]{Observation}
\newcounter{claimnumber}[equation]
\newtheorem{claim}{Claim}[claimnumber]
\theoremstyle{definition}
\newtheorem{conj}[thm]{Conjecture}
\newcommand{\NN}{\mathbb{N}}
\newcommand{\depth}{\text{depth}}
\newcommand{\score}{\text{score}}
\newcommand{\subsetsim}{\mathrel{\substack{\textstyle\subset\\[-0.3ex]\textstyle\sim}}}
\newcommand{\customlabel}[2]{%
    \protected@write \@auxout {}{\string \newlabel {#1}{{#2}{}}}}
\newcommand{\specialcell}[2][c]{%
    \begin{tabular}[#1]{@{}c@{}}#2\end{tabular}}
\begin{document}

\author{
    Kevin Hendrey\footnotemark[2] \qquad
    David~R.~Wood\footnotemark[2] \qquad
    Jung Hon Yip\footnotemark[2]}

\footnotetext[2]{School of Mathematics, Monash   University, Melbourne, Australia, \texttt{\{Kevin.Hendrey1,David.Wood,\linebreak Junghon.Yip\}@monash.edu}. Research of Hendrey is supported by the Australian Research Council. Research of Wood is supported by the Australian Research Council and NSERC. Yip is supported by a Monash Graduate Scholarship.}


\title{\bf\boldmath Embedding Graphs of Simple Treewidth \\into Sparse Products}

\maketitle

\begin{abstract}
    We study embeddings of graphs with bounded treewidth or bounded simple treewidth into the undirected graph underlying the directed product of two directed graphs. If the factors have bounded maximum indegrees, then the product graph has bounded maximum indegree and therefore is sparse. We prove that every graph of simple treewidth $k$ is contained in (ignoring edge directions) the directed product of directed graphs $\vec H_1$ and $\vec H_2$, with $\indeg(\vec H_1), \indeg(\vec H_2) \leq k-1 $ and $\tw(H_1), \tw(H_2) \leq k-1$.  Further, we show that this treewidth bound is best possible. Several corollaries follow from our results: every outerplanar graph is contained in the directed product of trees with maximum indegree $1$, and every planar graph with treewidth $3$ is contained in a directed product of graphs with treewidth $2$ and maximum indegree $2$. However, for graphs of treewidth $k$, we prove a negative result: for any integers $s, t, k \geq 1$, there is a graph $G$ with treewidth $k$ not contained in the directed product of $\vec H_1$ and $\vec H_2$ for any directed graphs $\vec H_1$ and $\vec H_2$ with $\indeg(\vec H_1) \leq s$, $\indeg(\vec H_2) \leq t$, and $\tw(H_1), \tw(H_2) \leq k-1$. This result stands in stark contrast to the strong product case, where Liu, Norin and Wood [arXiv:2410.20333] proved that the optimal lower bound on the factors is about half the treewidth.
\end{abstract}

\renewcommand{\thefootnote}
{\arabic{footnote}}


\section{\boldmath Introduction}
\label{s:intro}
Graph product structure theory describes graphs in complicated graph classes as subgraphs of products of graphs in simpler graph classes, typically with bounded treewidth or bounded pathwidth. As defined in \cref{ss:treewidth}, the treewidth of a graph $G$, denoted by \defn{$\tw(G)$}, is the standard measure of how similar $G$ is to a tree\footnote{Unless stated otherwise, all graphs are simple and finite.}.

As illustrated in \cref{fig:directed_grid}, the \defn{strong product $H_1 \StrongProd H_2$} of graphs $H_1$ and $H_2$ has vertex set $V(H_1) \times V(H_2)$,
where distinct vertices $(x,y), (x',y')$ are adjacent if $x = x'$ and $yy' \in E(H_2)$, or $y = y'$ and $xx' \in E(H_1)$, or $xx'\in E(H_1)$ and $yy' \in E(H_2)$. Edges of the last type are called \defn{diagonal} edges.

The following theorem of
\citet*{DJMMUW20} is the classical example of a graph product structure theorem. A graph $H$ is \defn{contained} in a graph $G$ if $H$ is isomorphic to a subgraph of $G$, denoted \defn{$H \subsetsim G$}. The complete graph on $n$ vertices is denoted \defn{$K_n$}.

\begin{thm}[Planar Graph Product Structure Theorem \cite{DJMMUW20}]
    \label{thm:PGPST}
    Every planar graph $G$ is contained in $H \boxtimes P \boxtimes K_3$ for some planar graph $H$ with $\tw(H) \leq 3$ and path $P$.
\end{thm}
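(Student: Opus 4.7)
The plan is to combine a BFS layering of $G$ with a structural \emph{tripod partition} of planar graphs. First, I would fix a planar embedding of $G$, pick a root $r$, and compute a BFS spanning tree $T$ of $G$ rooted at $r$. This yields a natural layering $V_0, V_1, \ldots, V_d$ with $V_i = \{v : \mathrm{dist}_G(r,v) = i\}$. The path $P$ will have one vertex per nonempty layer, and each $v \in V(G)$ carries the coordinate $\mathrm{dist}_G(r,v)$ in the $P$-factor; since any $G$-edge joins vertices in the same or consecutive layers (a standard BFS property), this coordinate behaves correctly with respect to the strong product.

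Next, I would establish the key structural lemma: $V(G)$ can be partitioned into parts $X_1, \ldots, X_m$ such that (i) each $X_i$ is the union of at most three vertical paths of $T$, where a \emph{vertical path} is a path $v_0 v_1 \cdots v_\ell$ in $T$ with each $v_{j+1}$ the $T$-parent of $v_j$, and (ii) the quotient graph $H$, with one vertex per part and edges between parts whose union contains a $G$-edge, is planar with $\tw(H) \leq 3$. I would prove this lemma by induction on $|V(G)|$: using the planar embedding, select three vertical paths in $T$ whose union separates $G$ into strictly smaller plane pieces bounded by new ``tripods'', declare that union to be a part $X_i$, and recurse on each piece. A tree decomposition of $H$ of width $3$ is assembled in parallel by attaching a bag of size at most $4$ for each tripod as it is produced, with the parent bag corresponding to the tripod on the separating boundary of the current piece.

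Finally, I would define the embedding $\phi : V(G) \to V(H) \times V(P) \times V(K_3)$ by $\phi(v) = (X_{i(v)}, \ell(v), t(v))$, where $X_{i(v)}$ is the part containing $v$, $\ell(v)$ is its layer index (a vertex of $P$), and $t(v) \in \{1,2,3\}$ indexes the vertical path of $X_{i(v)}$ containing $v$. Injectivity is immediate because the layer coordinate distinguishes vertices along any single vertical path. For any edge $uv$ of $G$: the parts $X_{i(u)}$ and $X_{i(v)}$ are equal or $H$-adjacent, the layers $\ell(u), \ell(v)$ differ by at most one, and the values $t(u), t(v)$ are automatically equal or adjacent in $K_3$; so $\phi(u)$ and $\phi(v)$ are adjacent in $H \StrongProd P \StrongProd K_3$, as required.

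The hard part is the structural lemma, specifically the bound $\tw(H) \leq 3$. Planarity of $H$ is easy, since $H$ is obtained from a plane graph by contracting connected subgraphs (the tripods). The real difficulty is producing the tripods so that the quotient has treewidth $3$ rather than merely being planar: the inductive construction must carefully track a bounded ``boundary tripod'' of each recursive piece and show that each newly added tripod interfaces with the existing boundary through at most four parts of $H$, so that the tree decomposition can be extended by one bag of size $4$ per recursive call.
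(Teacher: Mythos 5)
Your sketch follows essentially the same route as the proof of this theorem in the cited reference \citep{DJMMUW20} (the current paper states \cref{thm:PGPST} without proof): a BFS layering furnishing the path factor, a partition of a triangulation into tripods built from vertical paths of the BFS tree giving a planar quotient $H$ of treewidth at most $3$, and the vertex of $K_3$ indexing which of the at most three vertical paths contains a given vertex. You correctly identify the real work as the structural lemma — the induction on near-triangulations whose outer boundary consists of a bounded number of vertical paths, maintaining the invariant that each recursive piece is bounded by at most three tripods so that the tree-decomposition of $H$ can be extended by one bag of size four per tripod.
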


\cref{thm:PGPST} provides a powerful tool for studying questions about planar graphs, by reducing to graphs of bounded treewidth. Indeed, \cref{thm:PGPST} has been the key to resolving several open problems regarding queue layouts~\cite{DJMMUW20}, nonrepetitive colourings~\cite{DEJWW20}, centred colourings~\cite{DFMS21}, adjacency ordering schemes~\cite{GJ22,BGP22,EJM23,DEGJMM21}, twin-width~\cite{BKW,KPS24,JP22}, infinite graphs~\cite{HMSTW}, and comparable box dimension~\cite{DGLTU22}.

In \cref{thm:PGPST}, the treewidth $3$ bound is best possible (see Theorem 19 in \cite{DJMMUW20}). However, \citet{distel2024treewidth} proved that treewidth $2$ is achievable if one does not require the second factor to be a path.
\begin{thm}[\cite{distel2024treewidth}]
    \label{thm:Planar222}
    Every planar graph $G$ is contained in $H_1 \StrongProd H_2 \StrongProd K_2$ for some graphs $H_1$ and $H_2$ with $\tw(H_1)\leq 2$ and $\tw(H_2)\leq 2$.
\end{thm}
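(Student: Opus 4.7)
My plan is to refine the Planar Graph Product Structure Theorem (\cref{thm:PGPST}) by trading the $K_3$ factor together with one unit of treewidth in the main factor for a second treewidth-$2$ factor and a $K_2$. I would start from a decomposition $G \subsetsim H \boxtimes P \boxtimes K_3$ with $H$ planar and $\tw(H) \le 3$ given by \cref{thm:PGPST}, and aim to show that $H \boxtimes P \boxtimes K_3 \subsetsim H_1 \boxtimes H_2 \boxtimes K_2$ for graphs $H_1, H_2$ of treewidth at most $2$.

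The first step is to expose structure in $H$. I would assume $H$ is a planar $3$-tree, since any planar graph of treewidth at most $3$ is a spanning subgraph of one, and exploit its recursive stacking structure: $H$ is built from an initial $K_4$ by repeatedly adding a vertex inside a triangular face and joining it to the three face vertices. The key combinatorial move is a $2$-colouring of $V(H)$ designed so that each colour class induces a graph of treewidth at most $2$; this colouring is the role of the $K_2$ factor in the target product.

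The second step is to absorb the path $P$ into the two treewidth-$2$ factors. Let $H^{(0)}, H^{(1)}$ be the two colour-class subgraphs of $H$. I would set $H_i := H^{(i)} \boxtimes P_i$ where $P_i$ is a carefully chosen spanning subgraph of $P$ (or an appropriately layered refinement thereof) for which $\tw(H_i) \le 2$ still holds, taking advantage of the fact that the strong product of a forest with a path can remain treewidth $2$. The embedding $G \subsetsim H_1 \boxtimes H_2 \boxtimes K_2$ would then send each vertex of $G$ to a triple $(u, v, c) \in V(H_1) \times V(H_2) \times V(K_2)$ whose last coordinate records its colour: edges within a colour class are realised inside a single $H_i$ fibre, and edges across colour classes are realised by diagonal edges that use all three factors simultaneously.

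The crux, and the main obstacle, is constructing the $2$-colouring. A naive depth-parity colouring on the stacking tree of $H$ fails, since the initial $K_4$ already has all four vertices at ``depth $0$'' and treewidth $3$. I would instead use a rule that, at every stacking step, chooses the colour of the new vertex to avoid creating a monochromatic $K_4$-subgraph in either class, leveraging the fact that every stacked vertex has exactly three neighbours and so a forbidden configuration can always be broken locally. Verifying that such a rule produces treewidth-$2$ colour classes globally (not just avoiding local $K_4$s), and that it remains compatible with the path $P$ underlying the PGPST decomposition, is the delicate part. I suspect Distel's actual argument bypasses \cref{thm:PGPST} and instead builds $H_1 \boxtimes H_2 \boxtimes K_2$ directly from a BFS-tripod decomposition of $G$ with tripods grouped in pairs aligned with a natural $2$-colouring, but the underlying trade-off is the same: the $K_2$ factor absorbs a binary choice that effectively reduces the treewidth of the main factor in the classical product structure theorem by one.
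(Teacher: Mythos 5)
This statement is cited from \cite{distel2024treewidth}; the present paper states it without proof, so there is no in-paper argument to compare your attempt against. Evaluating your proposal on its own merits, it contains a fatal gap.

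The decisive error is in the construction $H_i := H^{(i)} \boxtimes P_i$ together with the claim that $\tw(H_i) \leq 2$, justified by the assertion that ``the strong product of a forest with a path can remain treewidth $2$.'' This is false, and not merely in pathological cases. If $H^{(i)}$ contains a single edge $uv$ and $P_i$ contains a single edge $ab$, then the four vertices $(u,a), (u,b), (v,a), (v,b)$ form a $K_4$ in $H^{(i)} \boxtimes P_i$: the two ``vertical'' edges, the two ``horizontal'' edges, and both diagonals are all present by the definition of the strong product. Hence $\tw(H^{(i)} \boxtimes P_i) \geq 3$ as soon as both factors contain an edge, which they certainly will in any non-degenerate instance. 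Worse, $P_n \boxtimes P_n$ has treewidth growing linearly in $n$, so absorbing even a moderate-length path into either factor blows the treewidth up unboundedly. The underlying intuition that treewidth behaves additively or stays small under strong products is exactly backwards; strong products multiply clique numbers (and roughly multiply treewidth plus one), which is precisely why products of bounded-treewidth graphs can be dense, as the paper itself emphasises when motivating directed products. Beyond this, the earlier claim that one can $2$-colour a planar $3$-tree so that each colour class has treewidth at most $2$ is plausible but is asserted rather than proved, and the claim that cross-colour edges of $G$ can always be realised by diagonals in $H_1 \boxtimes H_2 \boxtimes K_2$ given the constructed coordinates is never checked. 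A correct proof of this theorem cannot simply refine \cref{thm:PGPST} by a local recolouring; it requires constructing the two treewidth-$2$ factors from scratch, typically by interleaving two separate tree-decompositions of $G$ obtained from a layered or tripod-style decomposition, rather than by taking strong products with pieces of a path.
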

\cref{thm:Planar222} is best possible in the sense that for any $c \in \NN$, there is a planar graph $G$ such that for any tree $T$ and graph $H$ with $\tw(H) \leq 2$, $G$ is not contained in $H \boxtimes T \boxtimes K_c$ \citep{distel2024treewidth}.

Note that $H_1\StrongProd H_2$ may be dense even if both $H_1$ and $H_2$ have bounded treewidth. For example, the strong product $K_{1,n}\StrongProd K_{1,m}$ contains the complete bipartite graph $K_{n,m}$. Therefore, a weak point of \cref{thm:Planar222} is that it embeds planar graphs (which are sparse) in products of bounded treewidth graphs (which may be dense). In this paper we consider the directed product as a way to address this issue, building on previous work of \citet{LNW}.

\subsection{Directed Product}
\label{ss:directed_prod}
We use $G$ to represent an undirected graph, and $\vec G$ to represent a directed graph (or \defn{digraph} for short).
To avoid confusion, the term \defn{arc} refers to directed edges in digraphs, while the term \defn{edge} refers to undirected edges in undirected graphs.
We use the notation \defn{$\overrightarrow{uv}$} to denote an arc in $E(\vec G)$ directed from $u$ to $v$. Otherwise, $uv$ represents an undirected edge in $E(G)$.
A digraph $\vec G$ is \defn{oriented} if $\overrightarrow{uv} \in E(\vec G)$ implies $\overrightarrow{vu} \notin E(\vec G)$.
Given a digraph $\vec G$, we use $G$ to represent the undirected graph on the same vertex set as $\vec G$, with $uv \in E(G)$ if and only if $\overrightarrow{uv} \in E(\vec G)$ or $\overrightarrow{vu} \in E(\vec G)$. We call $G$ the \defn{underlying graph} of $\vec G$.

Let $\vec H_1$ and $\vec H_2$ be digraphs. As shown in \cref{fig:directed_grid}, the \defn{directed product $\overrightarrow{\vec H_1 \, \squareslash \, \vec H_2}$} of digraphs $\vec H_1$ and $\vec H_2$ is the directed graph with vertex set $V(H_1)\times V(H_2)$, where $\overrightarrow{(x,y)(x',y')} \in E(\overrightarrow{\vec H_1 \, \squareslash \, \vec H_2})$ is an arc if $x=x'$ and $\overrightarrow{yy'}\in E(\vec H_2)$, or $y=y'$ and $\overrightarrow{xx'} \in E(\vec H_1)$, or $\overrightarrow{xx'} \in E(\vec H_1)$ and $\overrightarrow{yy'} \in E(\vec H_2)$. Arcs of the last type are called \defn{diagonal} arcs.

\begin{figure}[t]
    \centering
    \includegraphics{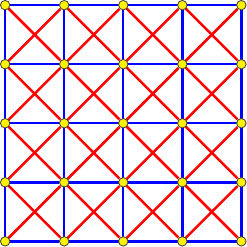}
    \qquad
    \includegraphics{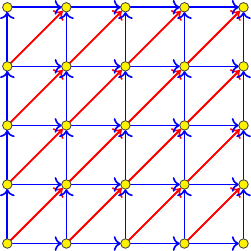}
    \qquad
    \includegraphics{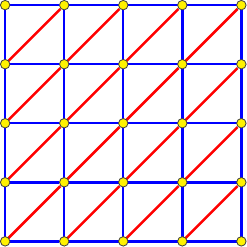}
    \caption{Left: The strong product of two paths. Center: The directed product of two directed paths. Right: The underlying graph of the directed product of two directed paths. The red edges are the diagonal edges.}
    \label{fig:directed_grid}
\end{figure}
The directed product is the natural generalisation of the strong product to directed graphs, and appears in several places in the literature \citep{LNW, bang2018classes}.

The digraphs $\vec H_1$ and $\vec H_2$ are called the \defn{factors} of $\overrightarrow{\vec H_1 \, \squareslash \, \vec H_2}$. An edge of $E(\vec H_1 \, \squareslash \, \vec H_2)$ is \defn{diagonal} if it arises from a diagonal arc in $\overrightarrow{\vec H_1 \, \squareslash \, \vec H_2}$. This paper is mainly concerned with the underlying undirected graph of $\overrightarrow{\vec H_1 \, \squareslash \, \vec H_2}$, denoted by \defn{$\vec H_1 \, \squareslash \, \vec H_2$}. See \cref{fig:directed_grid} for details.

For a digraph $\vec H$ and $v \in V(H)$, let \defn{$d_{\vec H}^-(v)$} be the \defn{indegree} of $v$ in $\vec H$. Let \defn{$\indeg(\vec H)$} be the maximum indegree of $\vec H$. For any orientation of the edges of the graphs $H_1$ and $H_2$, we have $\vec H_1 \, \squareslash \, \vec H_2 \subseteq H_1 \boxtimes H_2$.

The key benefit in considering the directed product over the strong product is that if the factors have bounded maximum indegree, then so does the product. More precisely,
consider digraphs $\vec H_1$ and $\vec H_2$ with  $\indeg(\vec H_1) \leq s$ and $\indeg(\vec H_2) \leq t$. Then $\indeg(\overrightarrow{\vec H_1 \, \squareslash \, \vec H_2}) \leq st + s + t$. Let $H'$ be a subgraph of $\vec H_1 \, \squareslash \, \vec H_2$, and let $\vec H$ be the subgraph of $\overrightarrow{\vec H_1 \, \squareslash \, \vec H_2}$ induced by $V(H')$. Then,
\begin{equation*}
    |E(H')| \leq  |E(\vec H)| = \sum_{v \in V(H)} d_{\vec H}^{-}(v) \leq |V(H')| \indeg(\vec H) \leq (st + s + t) |V(H')|,
\end{equation*}
and so $H'$ has bounded average degree. As an illustrative example, the directed product of two large stars $K_{1,n}$ and $K_{1,m}$ does not contain $K_{m,n}$ if the arcs of each factor are oriented away from the centre, unlike in the strong product case. Therefore, showing that a graph $G$ is contained in the underlying graph of a directed product of two digraphs with
bounded maximum indegree  is qualitatively stronger than showing that $G$ is contained in the strong product of the corresponding underlying undirected graphs.

Our first contributions are the following results, which are qualitative improvements to \cref{thm:Planar222} for two well-studied special classes of planar graphs.

\begin{restatable}{thm}{outerplanar}
    \label{thm:outerplanar}
    Every outerplanar graph is contained in $\vec T_1 \, \squareslash \, \vec T_2$ for some oriented trees $\vec T_1$ and $\vec T_2$ with $\indeg(\vec T_1), \indeg(\vec T_2) \leq 1$.
\end{restatable}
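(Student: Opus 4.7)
The plan is to reduce to $2$-connected maximal outerplanar graphs---triangulating to add chords and handling cut vertices via block decomposition---and then induct on $n = |V(G)|$. The base case $n = 3$ is $G = K_3$, which embeds via $\vec T_1 = r_1 \to c_1$, $\vec T_2 = r_2 \to c_2$ with the three vertices placed at $(r_1, r_2), (c_1, r_2), (c_1, c_2)$. For $n \ge 4$, I would use that any such $G$ has an \emph{ear} vertex $w$ of degree $2$ whose two neighbours $u, v$ are joined by a chord, apply the inductive hypothesis to $G - w$ to obtain an embedding $\phi'$ into some $\vec T_1' \squareslash \vec T_2'$, and extend $\phi'$ to $\phi$ by placing $\phi(w)$ at a common product-neighbour of $\phi'(u)$ and $\phi'(v)$, possibly appending one new vertex to one of the factor trees.

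The extension step splits into three cases based on the relative positions $\phi'(u) = (a, b)$ and $\phi'(v) = (a', b')$. In the \textbf{column case} ($a = a'$ and $\overrightarrow{bb'} \in \vec T_2'$), I would append a fresh child $c_a$ of $a$ to $\vec T_1'$ and set $\phi(w) = (c_a, b')$, which is connected to $\phi'(v)$ by a row edge and to $\phi'(u)$ by a diagonal arc. The \textbf{row case} is symmetric. In the \textbf{diagonal case} ($\overrightarrow{aa'} \in \vec T_1'$ and $\overrightarrow{bb'} \in \vec T_2'$), the only admissible choices are the two corner positions $(a, b')$ and $(a', b)$, one of which must be free.

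The main obstacle is ruling out \emph{phantom} product-edges between $\phi(w)$ and any $\phi'(z)$ for $z \notin \{u, v\}$. A decisive general fact enabling this is that max indegree $1$ in each factor forces distinct children of a common parent to be non-adjacent in the directed product---they are siblings, not parent-child---and the same-direction requirement on diagonal arcs kills the anti-diagonal of any $2 \times 2$ square; together these exclude most potential phantom adjacencies. To control what remains, especially in the diagonal case where both corners might a priori be occupied, I would strengthen the inductive hypothesis to track a designated outer edge $xy$ of $G$ in a canonical configuration (e.g.\ $\phi(x) = (r_1, r_2)$ and $\phi(y) = (r_1, c_2)$ with $r_1$ otherwise unused in the first column) and verify that each extension case restores this invariant.

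A much shorter derivation is available as a corollary of the paper's main theorem at $k = 2$, using the classical characterization that outerplanar graphs are exactly the graphs of simple treewidth at most $2$: the main theorem then yields an embedding into the directed product of two forests with max indegree $1$, and any such forest extends to a single oriented tree with max indegree $1$ by adding arcs from one chosen root to the roots of the other components (each raising one indegree from $0$ to $1$), which only enlarges the product and preserves containment of $G$.
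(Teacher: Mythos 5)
Your high-level structure---peel off an ear vertex $w$ of degree two, split into the column/row case where the chord $uv$ shares a coordinate and the diagonal case where it does not---is essentially the paper's triangle-pasting recursion run in reverse, and your column/row extension (append a fresh leaf to a factor tree and land on the sink side) is precisely the paper's Case~1. The ``phantom product-edge'' worry is a red herring, though: containment only requires that edges of $G$ map to edges of the product, so extra product-adjacencies at $\phi(w)$ are harmless. The genuine concern is injectivity, i.e.\ that the chosen corner is not already occupied.

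The real gap is in the diagonal case, and your proposed strengthening---carrying a single designated outer edge in a canonical non-diagonal configuration---does not supply what is needed. If you force the chord $uv$ to be the designated edge, you lose control over whichever outer edge of $G$ you actually want to designate (and in, say, a fan triangulation most outer edges are incident to no ear at all, so you cannot always peel an ear next to your target edge); if instead you designate something else, $uv$ may well be diagonal. And in the diagonal case the obstruction is not merely that both corners $(a,b')$ and $(a',b)$ could be occupied: even if they are occupied by vertices $z_1,z_2$ of $G-w$, a product-adjacency between $z_i$ and $u$ or $v$ is \emph{not} automatically an edge of $G$, so you cannot directly exhibit $K_{\overline{2},3}$ and contradict outerplanarity. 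The paper's invariants (I3) and (I4) exist precisely to bridge this: (I3) says every \emph{used} sibling of a diagonal edge is adjacent \emph{in $G$} to both of its endpoints, and (I4) prevents distinct diagonal edges from competing for the same unused sibling. A single designated edge carries no information about the siblings of the other diagonal edges in the embedding, so it cannot play this role. Your fallback derivation from \cref{thm:dp_ub_stw} at $k=2$ (with the forest-to-tree patching) is logically correct, but the paper deliberately proves \cref{thm:outerplanar} independently as a warm-up to exactly the sibling invariants that then drive the proof of \cref{thm:dp_ub_stw}.
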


\begin{thm}
    \label{thm:apollonian}
    Every planar graph with treewidth $3$ is contained in $\vec H_1 \, \squareslash \, \vec H_2$ for some oriented digraphs $\vec H_1$ and $\vec H_2$ with $\tw(H_1), \tw(H_2) \leq 2$, and $\indeg(\vec H_1), \indeg(\vec H_2) \leq 2$.
\end{thm}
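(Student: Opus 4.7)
My plan is to deduce Theorem~\ref{thm:apollonian} from the main simple-treewidth embedding theorem of this paper (stated in the abstract: every graph of simple treewidth $k$ is contained in $\vec H_1 \, \squareslash \, \vec H_2$ with $\tw(H_i), \indeg(\vec H_i) \leq k-1$), applied with $k = 3$. It thus suffices to show that every planar graph $G$ with $\tw(G) \leq 3$ satisfies $\stw(G) \leq 3$.

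For this, I would extend $G$ to a planar $3$-tree $G'$, appealing to the classical fact that the planar graphs of treewidth at most $3$ are (subgraphs of) planar $3$-trees, i.e.\ Apollonian networks built by starting from a triangle (or $K_4$) and repeatedly stacking a new vertex inside a triangular face and joining it to the three corners of that face. Since simple treewidth is monotone under subgraphs, it suffices to bound $\stw(G')$. I would then consider the canonical tree decomposition of $G'$ whose bags are the $4$-cliques (``tetrahedra'') created by the stacking steps, with two bags adjacent in the decomposition tree exactly when the corresponding tetrahedra share a triangle; the tree structure of this decomposition comes from the recursive stacking history. Every bag has size $4 = k+1$, so the width is $3$. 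The key observation is that each triangle of $G'$ lies in at most two tetrahedra of the decomposition: by planarity a triangular face has at most two sides in the embedding, and any stacking vertex joined to all three corners must sit on one of those sides. This is precisely the extra condition needed to upgrade treewidth to simple treewidth, yielding $\stw(G') \leq 3$.

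The main obstacle is the structural reduction to planar $3$-trees: if this classical fact does not apply exactly as stated for every planar graph of treewidth $\leq 3$, one instead has to build a simple-treewidth-$3$ decomposition of $G$ directly from a nested sequence of (size $\leq 3$) separators extracted from a planar embedding, which is more involved but still routine. Once $\stw(G) \leq 3$ is in hand, invoking the main theorem immediately produces $\vec H_1, \vec H_2$ with $\tw(H_i) \leq 2$ and $\indeg(\vec H_i) \leq 2$; orientedness of the factors should follow from the fact that the main theorem's construction orients edges using a root-to-leaf direction of the underlying decomposition tree, which is automatically acyclic.
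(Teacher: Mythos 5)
Your proposal is correct and takes the same route as the paper: deduce the theorem from \cref{thm:dp_ub_stw} with $k=3$ together with the fact that a planar graph of treewidth at most $3$ has simple treewidth at most $3$. The only difference is that the paper simply cites this characterisation (from \cite{KV12}, quoted in \cref{ss:treewidth}: ``a graph has simple treewidth at most $3$ if and only if it has treewidth $3$ and is planar'') rather than re-deriving it; your Apollonian-network sketch is a reasonable in-line justification of that cited fact, and your final worry about orientedness is unnecessary since \cref{thm:dp_ub_stw} already asserts the factors are oriented.
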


We show that the treewidth bound on the factors in \cref{thm:apollonian} is best possible. In particular, \cref{thm:sp_lb_stw} implies that there is a planar graph with treewidth $3$ not contained in the strong product of two trees. In \cref{thm:dp_ub_stw}, we actually prove a substantial generalisation of \cref{thm:outerplanar,thm:apollonian} in terms of `simple treewidth', which we introduce below.

\subsection{Treewidth and Simple Treewidth}
\label{ss:treewidth}
For a non-null tree $T$, a \defn{$T$-decomposition} of a graph $G$ is a collection $\mathcal{B}= (B_x:x \in V(T))$ of subsets of $V(G)$ such that:

\begin{itemize}
    \item for each edge ${vw \in E(G)}$, there exists a node ${x \in V(T)}$ with ${v,w \in B_x}$, and
    \item for each vertex ${v \in V(G)}$, the set $\{ x \in V(T) : v \in B_x \}$ induces a non-empty (connected) subtree of $T$.
\end{itemize}
The \defn{width} of such a $T$-decomposition is ${\max\{ |B_x| : x \in V(T) \}-1}$. A \defn{tree-decomposition} is an ordered pair $(T,\mathcal{B})$ consisting of a tree $T$ and a $T$-decomposition $\mathcal{B}$. The \defn{treewidth} of a graph $G$, denoted \defn{$\tw(G)$}, is the minimum width of a tree-decomposition of $G$. Treewidth is the standard measure of how similar a graph is to a tree. For example, a connected graph has treewidth at most 1 if and only if it is a tree. It is an important parameter in structural graph theory, especially in Robertson and Seymour's graph minor theory, and also in algorithmic graph theory, since many NP-complete problems are solvable in linear time on graphs with bounded treewidth. See \citep{HW17,Bodlaender98,Reed97} for surveys on treewidth.

A tree-decomposition $(T, \mathcal{B})$ of a graph $G$ is \defn{$k$-simple}, for some $k\in\NN$,  if it has  width  at most $k$, and for every set $S$ of $k$ vertices in $G$, we have $|\{x\in V(T): S\subseteq B_x\}|\leq 2$. The \defn{simple treewidth} of a graph $G$, denoted by \defn{$\stw(G)$}, is the minimum $k\in\NN$ such that $G$ has a $k$-simple tree-decomposition. Simple treewidth appears in several places in the literature under various guises \citep{KU12,KV12,MJP06,Wulf16}. The following facts are well known: A connected graph has simple treewidth 1 if and only if it is a path. A graph has simple treewidth at most 2 if and only if it is outerplanar. A graph has simple treewidth at most 3 if and only if it has treewidth 3 and is planar~\citep{KV12}. The edge-maximal  graphs with simple treewidth 3 are ubiquitous objects, called  \defn{planar 3-trees} in structural graph theory and graph drawing~\citep{AP-SJADM96,KV12}, \defn{stacked polytopes} in polytope theory~\citep{Chen16}, and \defn{Apollonian networks} in enumerative and random graph theory~\citep{FT14}. It is also known and easily proved that $\tw(G) - 1 \leq \stw(G)\leq \tw(G)$ for every graph $G$ (see \citep{KU12,Wulf16}).

We remark that there is a notion of `directed treewidth' defined by \citet{JRST-JCTB01}. However, this paper is only concerned with the treewidth of undirected graphs.

\textbf{Goals of the paper.} This paper studies non-trivial embeddings of graphs with treewidth $k$ and graphs with simple treewidth $k$ in three settings: strong products, directed products with unbounded indegree, and directed products with bounded indegree. By `non-trivial', we want the treewidth of the factors to be strictly less than $k$, and the goal is to minimise the treewidth of the factors. The strong product setting was studied by \citet*{LNW}, who obtained tight bounds on the treewidth of the factors when embedding treewidth $k$ graphs. We extend their results to simple treewidth $k$ graphs, and give tight upper and lower bounds on the treewidth of the factors in the other two settings. All of these bounds are summarised in \cref{tab:summary}. Our main contributions are \cref{thm:dp_lb,thm:dp_ub_stw,thm:sp_lb_stw}.
\begin{table}[H]
    \centering
    \caption[]{The upper bound in each setting means that every graph in $\mathcal{G}$ is contained in $H_1 \boxtimes H_2$ (or $\vec H_1 \, \squareslash \, \vec H_2$) for some graphs $H_1$ and $H_2$ with $p:=\tw(H_1)$ and $q:=\tw(H_2)$ satisfying the upper bound. The lower bound in each setting means that there is a graph $G \in \mathcal{G}$ such that if $G \subsetsim H_1 \boxtimes H_2$ (or $G \subsetsim \vec H_1 \, \squareslash \, \vec H_2$), then $p:=\tw(H_1)$ and $q:=\tw(H_2)$ satisfy the lower bound.}
    \begin{tabular}{|p{12mm}|p{28mm}|p{28mm}|p{28mm}|}
        \hline
        \multicolumn{1}{|c|}{\specialcell[t]{\textbf{graph class} $\mathcal{G}$}} & \multicolumn{1}{c|}{\specialcell[t]{\textbf{strong product}}}             & \multicolumn{1}{c|}{\specialcell[t]{\textbf{directed product} \\ \textbf{$\indeg(H_i)$ unbounded}}} & \multicolumn{1}{c|}{\specialcell[t]{\textbf{directed product} \\ \textbf{$\indeg(H_i)$ bounded}}} \\
        \hline
        \multicolumn{1}{|c|}{\specialcell[t]{$\tw(G) \leq k$}}                    & \multicolumn{2}{c|}{\specialcell[t]{$p + q \leq k +1
        $ (\ref{prop:sp_ub}, \ref{lem:dpui_ub})                                                                                                                                                                               \\$p + q \geq k +1$ (\ref{lem:sp_lb}, \ref{cor:dpui_lb}) } } & \multicolumn{1}{c|}{\specialcell[t]{\\$p \geq k$ or $q \geq k$ (\ref{thm:dp_lb})}}  \\
        \hline
        \multicolumn{1}{|c|}{\specialcell[t]{$\stw(G) \leq k$}}                   & \multicolumn{2}{c|}{\specialcell[t]{$p + q \leq k +1$  (\ref{prop:sp_ub})                                                                 \\$p + q \geq k +1$ for $p,q \leq k-2$ (\ref{thm:sp_lb_stw},
        \ref{cor:dpui_lb_stw})}}                                                  & \multicolumn{1}{c|}{\specialcell[t]{$p,q \leq k-1$ (\ref{thm:dp_ub_stw})                                                                  \\$p \geq k-1$ or $q \geq k-1$ (\ref{cor:dp_lb_stw})}}
        \\
        \hline
    \end{tabular}
    \label{tab:summary}
\end{table}

\subsection{Embedding into Strong Products}
\label{ss:strong_product}
\citet*[Proposition 3.6]{LNW} showed\footnote{\citet*{LNW} actually showed that every graph of treewidth $k$ is contained in $H_1 \boxtimes H_2$ for some graphs $H_1$ and $H_2$ with $\tw(H_i) \leq \ceil{(k+1)/2}$, but their proof can be modified to prove \cref{prop:sp_ub}.}:
\begin{prop}
    \label{prop:sp_ub}
    For any integers $p, q, k \geq 1$ with $p + q \geq k+1$, every graph $G$ of treewidth $k$ is contained in $H_1 \boxtimes H_2$ for some graphs $H_1$ and $H_2$ with $\tw(H_1) \leq p$ and $\tw(H_2) \leq q$.
\end{prop}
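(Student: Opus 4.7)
My plan is to split each bag of a tree decomposition of $G$ into two parts, one hosted by $H_1$ and one by $H_2$, and to use a single ``apex'' vertex in each factor to realise cross-side edges as diagonal edges of the strong product. I would first reduce to the tight case $p+q = k+1$: if $p + q > k+1$, set $q' := k+1-p \leq q$, since any embedding of $G$ into $H_1 \boxtimes H_2'$ with $\tw(H_2') \leq q'$ already witnesses the conclusion for $q$. Then I fix a tree decomposition $(T, (B_x)_{x \in V(T)})$ of $G$ of width $k$ and properly colour $V(G)$ with $\{1, \dots, k+1\}$ so that every bag has pairwise distinct colours; this is the standard greedy argument on a rooted $T$, processing nodes top-down and giving each newly-introduced vertex a colour distinct from the (at most $k$) other vertices of its bag.

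Given the colouring, I would partition the colour set into $C_1 := \{1, \dots, p\}$ and $C_2 := \{p+1, \dots, k+1\}$, let $V_i := \{v \in V(G) : c(v) \in C_i\}$ so that $|V_i \cap B_x| \leq |C_i|$ for every $x$, and introduce fresh apex vertices $*_1, *_2 \notin V(G)$. Define $H_i$ on vertex set $V_i \cup \{*_i\}$ with an edge between any two vertices sharing some bag $B^{(i)}_x := (V_i \cap B_x) \cup \{*_i\}$. Then $(T, (B^{(i)}_x))$ is a valid tree decomposition of $H_i$ (each $v \in V_i$ reuses its $G$-subtree $T_v$, and $*_i$ appears in every bag), of width at most $|C_i|$, namely $p$ or $q$. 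Crucially, $*_i$ is adjacent in $H_i$ to every non-apex vertex of $H_i$.

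Finally I define $\phi \colon V(G) \to V(H_1) \times V(H_2)$ by $\phi(v) := (v, *_2)$ for $v \in V_1$ and $\phi(v) := (*_1, v)$ for $v \in V_2$. Injectivity follows since $V_1, V_2$ are disjoint and neither apex lies in $V(G)$. For an edge $uv$ of $G$ contained in a bag $B_x$, there are two cases: if $u, v$ lie on the same side $V_i$, then $\{u, v\} \subseteq B^{(i)}_x$ yields an edge of $H_i$ in one coordinate of $\phi$ while the other coordinate is the common apex; if they lie on opposite sides, then $\phi(u)\phi(v)$ is a diagonal edge, using that each apex is adjacent to all non-apex vertices in its factor (in particular to the $u$ and $v$ sitting in the bag $B_x$). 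Either way $\phi(u)\phi(v) \in E(H_1 \boxtimes H_2)$, so $G$ embeds into $H_1 \boxtimes H_2$. The construction is elementary with no real obstacle; the key design point is to size $C_i$ as $p$ (resp.\ $q$) rather than $p+1$ (resp.\ $q+1$), leaving exactly one unit of width to absorb the apex---and this is precisely why the hypothesis $p+q \geq k+1$ suffices.
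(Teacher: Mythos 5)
Your argument is correct and, at heart, the same as the paper's: both proofs split the vertex set into two parts each living inside a bounded-treewidth host, add a single apex (dominant) vertex to each host, and realise cross-part edges as diagonal edges of the strong product using those apices. Where you differ is that the paper outsources the splitting step to the partition lemma of Ding, Oporowski, Sanders and Vertigan (\cref{lem:partitions}: every graph of treewidth $p'+q'+1$ admits a vertex partition into parts of treewidth at most $p'$ and $q'$) and then applies the apex trick of \cref{lem:lnw}, whereas you inline the standard proof of that lemma via a rainbow $(k+1)$-colouring of a width-$k$ tree-decomposition and the colour-class split $C_1,C_2$. The trade-off is that your version is self-contained and makes the ``one unit of width absorbed by the apex'' mechanism explicit, at the cost of reproving a known lemma; the paper's version is shorter because it quotes the partition result. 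One small wrinkle in your write-up: the reduction ``set $q':=k+1-p$'' implicitly assumes $p\leq k$, since otherwise $q'\leq 0$ and the case $p+q'=k+1$ is not an instance of the proposition with $q'\geq 1$. This is easy to patch --- either observe first that if $p\geq k$ one may take $H_1$ to be a $k$-tree containing $G$ and $H_2=K_1$, or simply take $C_1=\{1,\dots,\min(p,k+1)\}$ and $C_2$ the remaining colours, which makes the reduction unnecessary --- but as stated the reduction has this edge case.
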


\citet*[Lemma 4.6]{LNW} also proved a tight lower bound:
\begin{lem}
    \label{lem:sp_lb}
    For any integers $p, q, k \geq 1$ with $p + q \leq k$, there is a graph $G$ with treewidth $k$ that is not contained in $H_1 \boxtimes H_2$ for any graphs $H_1$ and $H_2$ with $\tw(H_1) \leq p$ and $\tw(H_2) \leq q$.
\end{lem}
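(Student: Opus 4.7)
The plan is to construct, for given $p, q, k \geq 1$ with $p + q \leq k$, a $k$-tree $G$ whose ``clique-tree'' has enough branching (as a function of $p, q, k$) to frustrate any attempted embedding into a strong product of low-treewidth factors. Since $G$ is a $k$-tree, automatically $\tw(G) = k$. A first candidate is the $k$-th power of a path, but this is easily embeddable into $P_n \StrongProd K_k$, so a richer structure is needed: the construction would be a $k$-tree where the auxiliary clique-tree branches sufficiently (say, a complete $r$-ary rooted tree of depth $d$, with $r,d$ chosen large depending on $p,q,k$), with each new vertex attached to a $k$-face of the previous $(k+1)$-clique to form a new $(k+1)$-clique.

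Suppose for contradiction that $G \subsetsim H_1 \StrongProd H_2$ via an embedding $\phi = \phi_1 \times \phi_2$, with $\tw(H_1) \leq p$, $\tw(H_2) \leq q$, $p+q \leq k$. For each $(k+1)$-clique $C$ of $G$, the image $\phi(C)$ is a clique in $H_1 \StrongProd H_2$, so it decomposes as $\phi_1(C) \times \phi_2(C)$, where $\phi_1(C)$ is a clique in $H_1$ of size at most $p+1$, $\phi_2(C)$ is a clique in $H_2$ of size at most $q+1$, and $(p+1)(q+1) \geq k+1$. Assign to each $(k+1)$-clique $C$ its \emph{type} $(|\phi_1(C)|, |\phi_2(C)|)$, of which there are at most $(p+1)(q+1)$. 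Also assign a ``combinatorial gluing pattern'' to each pair of adjacent cliques in the clique-tree, recording which positions in the $a \times b$ grid of $\phi(C)$ correspond to the shared $k$ vertices of the neighbouring clique.

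By a Ramsey/pigeonhole argument on the clique-tree, choosing the branching and depth large enough, extract a long pure sub-chain (or sub-tree) of $(k+1)$-cliques $C_1, C_2, \ldots$ all of the same type $(a,b)$ and all glued in the same combinatorial pattern. In this pure chain, one analyzes how $\phi_1(C_i)$ and $\phi_2(C_i)$ evolve. Either (i) one of $\phi_1(C_i), \phi_2(C_i)$ remains fixed across the chain, in which case the shared $a$-clique (or $b$-clique) in that factor combines with the varying cliques in the other factor to force a clique minor in the other factor that requires treewidth at least $\max(p,q)+1$; or (ii) both coordinates shift, producing a long ``ladder'' structure that, combined with the clique-in-each-bag condition, forces a large grid minor in one of $H_1, H_2$, again violating the treewidth bound. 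In either case the threshold $p + q \leq k$ ensures there is not enough room to absorb the accumulated clique material.

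The main obstacle is executing this last step cleanly: the case analysis on how adjacent grids can overlap along a shared $k$-face is delicate because the overlap can happen in several combinatorially inequivalent ways (different rows or columns can be deleted when moving from $C_i$ to $C_{i+1}$), and one must verify that every case produces a treewidth-violating minor in $H_1$ or $H_2$. The sharpness of the bound $p+q = k+1$ arises exactly here: if $p + q = k+1$ there is just enough slack to reassign coordinates between adjacent cliques, while at $p+q \leq k$ the overlap is one ``slot'' too tight, so repeated alignments cascade into unbounded treewidth in one factor.
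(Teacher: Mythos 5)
The paper does not actually prove \cref{lem:sp_lb}; it cites Lemma~4.6 of \citet{LNW}, so there is no in-paper proof to compare against line by line. Judged on its own merits, your proposal is a sketch rather than a proof, and the gap is exactly where you flag it: the step that is supposed to turn a ``pure sub-chain/sub-tree of same-type cliques'' into a treewidth-violating minor in $H_1$ or $H_2$ is left entirely open. This step is not a clean-up detail; it is the entire content of the lemma. In fact, the extraction by itself cannot work as stated: a long pure chain of $(k+1)$-cliques all of one type $(a,b)$ does not force anything by itself, as your own example $P_n^k \subsetsim P_n \StrongProd K_k$ demonstrates (that chain is perfectly uniform and embeds with factors of treewidth $1$ and $k-1$). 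What actually forces a contradiction in this line of work is not uniformity along a chain but the \emph{branching}: one attaches so many vertices to a single $k$-clique $C$ that, by counting, some attached vertex must land outside $P_1(C)\times P_2(C)$, strictly enlarging one projection, and this is iterated along a carefully chosen sequence of $k$-cliques so that one projection is driven past the clique bound. You do mention branching, but after the Ramsey extraction you reduce to a chain or a pure subtree, which loses precisely the counting that drives the argument.

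There are also two smaller issues. First, ``the image $\phi(C)$ \dots decomposes as $\phi_1(C)\times\phi_2(C)$'' is false; the image is only \emph{contained} in that product (e.g.\ a diagonal $2$-clique in $K_2\StrongProd K_2$). You use only the weaker, correct fact ($|\phi_1(C)|\le p+1$, $|\phi_2(C)|\le q+1$, $|\phi_1(C)|\cdot|\phi_2(C)|\ge k+1$), but the misstatement matters because it obscures that the embedding of $C$ inside the $a\times b$ grid can be essentially arbitrary, which is exactly why the overlap case analysis you mention is hard. Second, the proposed invariant (the pair $(|\phi_1(C)|,|\phi_2(C)|)$ plus a ``gluing pattern'') is not enough to control the evolution: the proofs in this area (compare the paper's own \cref{thm:main_thm_opt} and \cref{prop:stw_projections}) additionally track a maximality parameter ($\ell = \max_C(|P_1(C)|+|P_2(C)|)$ taken at its minimum over all admissible embeddings), a notion of ``full'' clique, and a classification of attached vertices by where they land relative to the two strips (inside the grid, in one strip, or diagonal). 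Without an invariant of this kind there is no way to rule out the projections simply cycling without growing. To repair the proposal you would need (i) the iterated attachment construction $G_0\subseteq G_1\subseteq\cdots\subseteq G_k$, where each step attaches more than $k^2$ pendant vertices to every $k$-clique, so pigeonhole always yields an attached vertex off the grid; (ii) a minimality argument producing a full clique; and (iii) a monotone quantity that the off-grid vertex strictly increases, until one projection exceeds $p+1$ or $q+1$. As written, the proposal names the destination but not the road.
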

In the case $p, q \leq k-2$, we prove a strengthening of \cref{lem:sp_lb}:

\begin{restatable}{thm}{restatablesplbstw}
    \label{thm:sp_lb_stw}
    For any integers $p, q \geq 1$ with $p, q \leq k - 2$ and $p + q \leq k$, there is a graph with simple treewidth $k$ that is not contained in $H_1 \boxtimes H_2$ for any graphs $H_1$ and $H_2$ with $\tw(H_1) \leq p$ and $\tw(H_2) \leq q$.
\end{restatable}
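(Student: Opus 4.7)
The plan is to strengthen the lower-bound construction used in the proof of \cref{lem:sp_lb} so that the witness graph has simple treewidth exactly $k$, rather than only treewidth $k$. My first step would be to inspect the construction behind \cref{lem:sp_lb}: that graph is most likely built as a $k$-tree by iterated stacking of new vertices onto existing $k$-cliques starting from $K_{k+1}$. If no $k$-clique is ever stacked onto more than twice, then the LNW witness is already a simple $k$-tree and we are done. Otherwise, I would rebuild a simple $k$-tree $G$ using the same stacking skeleton but subject to the simple-$k$-tree rule: wherever the original construction extended a $k$-clique $K$ by $m \geq 3$ new vertices, the new construction replaces this with a depth-$\lceil \log_2 m \rceil$ ``binary stacking tree'' rooted at $K$, each internal $k$-clique of which is extended by at most two children. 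The resulting $G$ is a simple $k$-tree, so $\stw(G) \leq k$, and it contains the same dense stacking configurations as the LNW graph at the cost of a polynomial blow-up in the number of vertices.

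The second step is to re-establish non-embeddability of $G$. Suppose for contradiction that $G \subsetsim H_1 \boxtimes H_2$ with $\tw(H_i) \leq p_i$, $p + q \leq k$, and $p, q \leq k - 2$. Under these hypotheses $\omega(H_i) \leq p_i + 1 \leq k - 1$, so every $k$-clique of $G$ must have at least two of its vertices identified by one of the projections $\pi_i : V(G) \to V(H_i)$. Iterating this ``collapsing'' down the binary stacking tree of $G$ and combining it with the pigeonhole or bramble argument of \cref{lem:sp_lb} should produce a contradiction, provided the stacking tree is deep enough --- which is precisely what the polynomial blow-up from the first step delivers.

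\textbf{Main obstacle.} The crux is showing that the simple-$k$-tree constraint does not destroy the obstruction driving LNW's proof. Because each $k$-clique of $G$ is extended by at most two vertices, the structure is genuinely sparser than an arbitrary $k$-tree, and one must verify that whatever bramble or projection invariant is used in \cref{lem:sp_lb} still survives inside the unfolded $G$. The hypothesis $p, q \leq k - 2$ provides exactly the slack needed: each factor has two units of ``codimension'' in treewidth, which accommodates the binary branching. The excluded boundary cases $(p, q) = (k-1, 1)$ and $(1, k-1)$ are precisely where a single wide factor could absorb an entire branch of the stacking tree into one bag, trivializing the counting and allowing the embedding; it is reassuring that the statement omits exactly these cases.
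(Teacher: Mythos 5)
Your proposal takes a genuinely different route from the paper, and the route has a substantive gap.

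The paper does not unfold the LNW treewidth-$k$ witness. Instead, it applies \cref{lem:lnw2} to obtain a graph $G_0$ of treewidth $k-1$ (one less than the target), then builds the witness $G$ in two explicit attachment stages: first attach exactly two new vertices to every $k$-clique of $G_0$ (yielding $G_1$), then attach $(k-1)^2+1$ new vertices to every $(k-1)$-clique of $G_1$. Starting one level down in treewidth is what makes the $k$-simple tree-decomposition possible, since each $k$-clique of $G_0$ sits in a unique size-$k$ bag and can safely absorb two new leaves. The non-embeddability proof then works with notions of ``full'' cliques, redundant vertices, and a careful ``jump-index'' analysis of a single full $(k+1)$-clique, not a bramble.

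The key flaw in your plan is the binary unfolding. In a simple $k$-tree each $k$-clique may be used as the neighbourhood of a new vertex at most \emph{once} (not twice, as you write), so when you replace ``attach $m$ vertices to a single $k$-clique $K$'' by a stacking tree rooted at $K$, the $m$ leaves become attached to $m$ \emph{distinct} $k$-cliques, not to $K$. The obstruction in \cref{lem:sp_lb} (like the one in the paper's own \cref{lem:bad_vertices}) relies on a pigeonhole over many vertices sharing a common $k$-clique neighbourhood: once the attachments are scattered across a deep stacking tree, that pigeonhole no longer applies, and ``deep enough'' does not recover it. You would need a replacement invariant that propagates projection information down the stacking tree, and you have not supplied one. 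Finally, your reading of the role of $p,q\leq k-2$ is off: the paper needs it precisely to guarantee $\omega(H_i)\leq k-1$, which is the hypothesis of \cref{lem:lnw2} and is also used in \cref{claim:disjoint_subclaim} to rule out a $k$-vertex projection; it is not about one factor ``absorbing a branch into a bag.''
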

Note that the $p, q \leq k -2$ assumption in \cref{thm:sp_lb_stw} implies $k \geq 3$. Indeed, the conclusion of \cref{thm:sp_lb_stw} is false with $k = 2$, and $p,q=1$. In particular, outerplanar graphs have simple treewidth $2$, and are contained in the strong product of two trees, as stated in \cref{thm:outerplanar}.

\subsection{Embedding into Directed Products with Unbounded Indegree}
\label{ss:dpui}
As explained in \cref{ss:directed_prod}, if the indegrees of the factors are unbounded, then the directed product is not sparse, and behaves more like the strong product. The method used to prove \cref{prop:sp_ub} can be used to show the following strengthening:
\begin{restatable}{lem}{restatabledpuiub}
    \label{lem:dpui_ub}
    For any integers $p, q, k \geq 1$ with $p + q \geq k + 1$, every graph of treewidth $k$ is contained in $\vec H_1 \, \squareslash \, \vec H_2$ for some oriented digraphs $\vec H_1$ and $\vec H_2$ with $\tw(H_1) \leq p$ and $\tw(H_2) \leq q$ respectively.
\end{restatable}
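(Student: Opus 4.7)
My plan is to adapt the proof of \cref{prop:sp_ub} by adding an orientation step that uses a rooted tree-decomposition of $G$. First I would take a width-$k$ tree-decomposition $(T, B)$ of $G$, root $T$ at an arbitrary node $r$, and define $\mathrm{top}(v)$ for each $v \in V(G)$ to be the node of $T$ closest to $r$ whose bag contains $v$. I would then fix a total order $\prec$ on $V(G)$ by ordering first by the depth of $\mathrm{top}(v)$ in $T$, breaking ties within each $\mathrm{top}^{-1}(x)$ by an arbitrary fixed ordering of $B_x$.

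Following the construction used to prove \cref{prop:sp_ub}, I would build graphs $H_1,H_2$ with $\tw(H_1)\leq p$ and $\tw(H_2)\leq q$, together with an embedding $\phi\colon V(G)\to V(H_1)\times V(H_2)$ realising $G\subseteq H_1\boxtimes H_2$. To orient the factors, for each $uv\in E(G)$ with $u\prec v$, writing $\phi(u)=(x,y)$ and $\phi(v)=(x',y')$, I would add the arc $\overrightarrow{xx'}$ to $\vec H_1$ (when $x\neq x'$) and the arc $\overrightarrow{yy'}$ to $\vec H_2$ (when $y\neq y'$). Unpacking the directed product definition, this places the arc $\overrightarrow{\phi(u)\phi(v)}$ into $\overrightarrow{\vec H_1\,\squareslash\,\vec H_2}$, so $uv$ becomes an edge of the underlying graph $\vec H_1\,\squareslash\,\vec H_2$.

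The main obstacle is \emph{consistency}: verifying that no edge of $H_1$ (or $H_2$) is assigned conflicting orientations by two different edges of $G$. This reduces to showing that, for each edge $xx'\in E(H_1)$, all edges $uv\in E(G)$ whose $H_1$-projection is $xx'$ agree on which of $u,v$ is the $\prec$-smaller one. In the construction behind \cref{prop:sp_ub}, the edges of $H_1$ split naturally into within-bag edges (oriented by the fixed within-bag ordering) and between-bag edges corresponding to parent-child moves in the rooted tree $T$ (oriented from parent to child), both of which are compatible with $\prec$; if the standard construction does not immediately give this property, I would adjust the position assignments so that the $H_1$-coordinate of $\phi(v)$ only depends on $v$ and the depth of $\mathrm{top}(v)$. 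Once consistency is established, any edges of $H_1,H_2$ not required by $\phi$ can be oriented arbitrarily. Because every arc we added is derived from a single total order $\prec$, no $2$-cycles are created, so $\vec H_1$ and $\vec H_2$ are oriented digraphs, and $G\subseteq \vec H_1\,\squareslash\,\vec H_2$ as required.
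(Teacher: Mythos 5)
Your plan leaves the central step---consistency of the arc orientations---unverified, and for the embedding that actually underlies \cref{prop:sp_ub} it would fail. The paper's construction (via \cref{lem:partitions,lem:lnw}) partitions $V(G)$ into sets $V_1,V_2$ with $\tw(G[V_i])\leq p-1$ and $q-1$, sets $H_i:=G[V_i]^+$ with dominating vertex $r_i$, and embeds $V_1$ on the line $V_1\times\{r_2\}$ and $V_2$ on the line $\{r_1\}\times V_2$. With that embedding, a vertex $u\in V_1$ adjacent to two vertices $v,v'\in V_2$ has both edges $uv$ and $uv'$ projecting to the \emph{same} $H_1$-edge $\{u,r_1\}$. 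If your order $\prec$ happened to put $u\prec v$ but $v'\prec u$ (which a depth-of-$\mathrm{top}$ order has no reason to forbid, since the DOSV98 partition is not aligned with depths of top nodes), you would demand both $\overrightarrow{u\,r_1}$ and $\overrightarrow{r_1\,u}$---exactly the conflict you flag as ``the main obstacle,'' and the proposal never actually rules it out. The fallback (``adjust the position assignments so the $H_1$-coordinate depends only on $v$ and the depth of $\mathrm{top}(v)$'') is not a concrete construction and doesn't obviously restore the treewidth bounds or the embedding property.

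The paper sidesteps the global-consistency question entirely by choosing orientations tied to the dominating vertices: all edges of $H_1$ at $r_1$ are directed away from $r_1$, all edges of $H_2$ at $r_2$ are directed towards $r_2$, and everything else arbitrarily. Then edges of $G$ inside $V_1$ or inside $V_2$ are realised on a single line of the product (one coordinate constant), so any orientation of the corresponding factor edge works; and every cross edge $uv$ with $u\in V_1$, $v\in V_2$ is realised by the arc $\overrightarrow{(r_1,v)(u,r_2)}$, using $\overrightarrow{r_1 u}$ and $\overrightarrow{v r_2}$. No total order is needed, and no two $G$-edges compete for the same factor-edge orientation. To repair your approach you would either need to prove that some global order $\prec$ is compatible with the specific embedding (in effect, one that places all of $V_1$ before $V_2$ or vice versa, at which point you have reconstructed the paper's orientation rule), or switch to the dominating-vertex orientation as in \cref{lem:lnw}.
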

For completeness, we provide the proof of \cref{lem:dpui_ub} in \ref{appendix:A}. Since $\vec H_1 \, \squareslash \, \vec H_2 \subseteq H_1 \boxtimes H_2$ for any digraphs $\vec H_1$ and $\vec H_2$, \cref{prop:sp_ub} follows from \cref{lem:dpui_ub}, and the following corollaries are immediate from \cref{lem:sp_lb,thm:sp_lb_stw}.

\begin{cor}
    \label{cor:dpui_lb}
    For any integers $p, q, k \geq 1$ with $p + q \leq k$, there is a graph with treewidth $k$ that is not contained in $\vec H_1 \, \squareslash \, \vec H_2$ for any graphs $\vec H_1$ and $\vec H_2$ with $\tw(H_1) \leq p$ and $\tw(H_2) \leq q$.
\end{cor}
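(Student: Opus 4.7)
The plan is essentially a one-line reduction to \cref{lem:sp_lb}. The key observation is that for any orientation of the edges of $H_1$ and $H_2$, we have $\vec H_1 \, \squareslash \, \vec H_2 \subseteq H_1 \boxtimes H_2$ (this is stated explicitly in \cref{ss:directed_prod}). Hence any containment in a directed product implies a containment in the corresponding strong product of the underlying graphs.

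Concretely, I would proceed as follows. Fix integers $p, q, k \geq 1$ with $p + q \leq k$. Apply \cref{lem:sp_lb} to obtain a graph $G$ with $\tw(G) = k$ such that $G \not\subsetsim H_1 \boxtimes H_2$ for every pair of graphs $H_1, H_2$ with $\tw(H_1) \leq p$ and $\tw(H_2) \leq q$. I claim this same $G$ witnesses the corollary. Indeed, suppose for contradiction that $G \subsetsim \vec H_1 \, \squareslash \, \vec H_2$ for some digraphs $\vec H_1, \vec H_2$ whose underlying graphs $H_1, H_2$ satisfy $\tw(H_1) \leq p$ and $\tw(H_2) \leq q$. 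Since $\vec H_1 \, \squareslash \, \vec H_2 \subseteq H_1 \boxtimes H_2$, we obtain $G \subsetsim H_1 \boxtimes H_2$, contradicting the choice of $G$ from \cref{lem:sp_lb}.

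There is no real obstacle here — the corollary is stated as being immediate, and indeed it is purely a consequence of the containment $\vec H_1 \, \squareslash \, \vec H_2 \subseteq H_1 \boxtimes H_2$ combined with the strong product lower bound. The only point worth noting is that \cref{lem:sp_lb} is stated purely in terms of the treewidths of the factors, with no hypothesis on orientations or indegrees, so it transfers without loss to the directed setting as long as we ignore indegree bounds (which is exactly the unbounded indegree regime considered here).
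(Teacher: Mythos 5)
Your proof is correct and matches the paper's intended argument exactly: the corollary is stated as immediate from Lemma~\ref{lem:sp_lb} via the containment $\vec H_1 \, \squareslash \, \vec H_2 \subseteq H_1 \boxtimes H_2$, which is precisely the reduction you give.
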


\begin{cor}
    \label{cor:dpui_lb_stw}
    For any integers $p,q,k \geq 1$ with $p, q \leq k-2$ and $p + q \leq k$, there is a graph with simple treewidth $k$ that is not contained in $\vec H_1 \, \squareslash \, \vec H_2$ for any digraphs $\vec H_1$ and $\vec H_2$ with $\tw(H_1) \leq p$ and $\tw(H_2) \leq q$.
\end{cor}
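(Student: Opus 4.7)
The plan is to derive this as a one-line consequence of \cref{thm:sp_lb_stw}, exploiting the containment $\vec H_1 \, \squareslash \, \vec H_2 \subseteq H_1 \boxtimes H_2$ noted in \cref{ss:directed_prod} for any choice of orientations. Concretely, given integers $p,q,k \geq 1$ with $p,q \leq k-2$ and $p+q \leq k$, I would first invoke \cref{thm:sp_lb_stw} with these same parameters to produce a graph $G$ with $\stw(G)=k$ that is not contained in any strong product $H_1 \boxtimes H_2$ with $\tw(H_1) \leq p$ and $\tw(H_2) \leq q$. The hypotheses on $p,q,k$ in \cref{cor:dpui_lb_stw} coincide exactly with those required by \cref{thm:sp_lb_stw}, so no adjustment of parameters is needed.

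I would then argue by contradiction. Suppose $G \subsetsim \vec H_1 \, \squareslash \, \vec H_2$ for some digraphs $\vec H_1, \vec H_2$ with $\tw(H_1) \leq p$ and $\tw(H_2) \leq q$. Since the underlying graph of the directed product is a subgraph of the strong product of the underlying graphs, this would force $G \subsetsim H_1 \boxtimes H_2$, contradicting the choice of $G$. Because \cref{thm:sp_lb_stw} already packages both the construction of $G$ and the obstruction, there is no genuine obstacle at this stage; all of the real work lies in proving that stronger theorem, which is exactly why the surrounding text flags \cref{cor:dpui_lb,cor:dpui_lb_stw} as \emph{immediate} consequences.
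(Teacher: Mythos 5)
Your proposal is correct and matches the paper's argument exactly: the paper derives \cref{cor:dpui_lb_stw} as an immediate consequence of \cref{thm:sp_lb_stw} via the containment $\vec H_1 \, \squareslash \, \vec H_2 \subseteq H_1 \boxtimes H_2$. No gaps.
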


\subsection{Embedding into Directed Products with Bounded Indegree}
\label{ss:dp}
The main contributions of this paper are the following two theorems. The first theorem says that it is impossible to embed every graph of treewidth $k$ in a directed product of digraphs with bounded maximum indegree whose underlying graphs have strictly smaller treewidth.

\begin{thm}
    \label{thm:dp_lb}
    For any integers $s, t, k \geq 1$, there is a graph of treewidth $k$ that is not contained in $\vec H_1 \, \squareslash \, \vec H_2$ for any digraphs $\vec H_1$ and $\vec H_2$ with $\indeg(\vec H_1) \leq s, \indeg(\vec H_2) \leq t$ and $\tw(H_1), \tw(H_2) \leq k -1$.
\end{thm}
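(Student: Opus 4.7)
The plan is to exhibit, for each triple $s,t,k\ge 1$, an explicit graph $G = G_{s,t,k}$ of treewidth $k$ and show that no embedding exists. A natural candidate is a large \emph{complete split graph}: a $K_k$-clique $A=\{a_1,\dots,a_k\}$ together with an independent set $B=\{b_1,\dots,b_N\}$ in which every $b_i$ is adjacent to every $a_j$. This graph is a $k$-tree, so $\tw(G)=k$, and it contains $N$ copies of $K_{k+1}$ sharing the common $k$-face $A$. I would take $N$ sufficiently large in terms of $s,t,k$.

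First, I would suppose for contradiction that $G \subsetsim \vec H_1 \, \squareslash \, \vec H_2$ with the stated constraints, and write $\pi_1,\pi_2$ for the coordinate projections. The key structural fact is that for each $v=(x,y)$ the in-neighbours of $v$ in the product have $\pi_1$-coordinate in $\{x\}\cup N^-_{H_1}(x)$ (at most $s+1$ values) and $\pi_2$-coordinate in $\{y\}\cup N^-_{H_2}(y)$ (at most $t+1$ values); in particular $d^-(v) \le st+s+t$. Since $\tw(H_i)\le k-1$ forces $\omega(H_i)\le k$, the set $X := \pi_1(A)$ is a clique in $H_1$ of size $\alpha \le k$ and $Y := \pi_2(A)$ a clique in $H_2$ of size $\beta \le k$, with $\alpha\beta \ge k$; for each $b_i$ and each $a_j$, the edge $a_j b_i$ forces $\pi_1(b_i) \in \{\pi_1(a_j)\}\cup N_{H_1}(\pi_1(a_j))$ and similarly for $\pi_2$.

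Next, I would case-split on $(\alpha,\beta)$. In the principal case $\alpha = \beta = k$, both $X$ and $Y$ form $K_k$'s and any common neighbour of $X$ outside $X$ would create a $K_{k+1}$ in $H_1$, violating $\omega(H_1)\le k$; hence $\pi_1(b_i)\in X$ and $\pi_2(b_i)\in Y$, and avoiding $b_i = a_j$ leaves at most $k(k-1)$ valid positions, giving $N \le k(k-1)$. When $\alpha < k$, the vertex $b_i$ may project to a common neighbour $w$ of $X$ outside $X$, and here the bounded indegree of $\vec H_1$ is decisive: via double counting, each such $w$ with $\alpha > s$ contributes at least $\alpha - s$ out-arcs into $X$, while $\sum_{x\in X}\indeg(x) \le \alpha s$, bounding the number of such $w$'s. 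The main obstacle is the subcase $\alpha \le s$ (symmetrically $\beta \le t$), where this accounting degenerates; the likely resolution is either to strengthen the construction — for instance by gluing together many complete split graphs along different $K_k$-cores so that in every projection pattern at least one core forces $\alpha > s$ — or by applying a Ramsey-type argument on the $b_i$'s classified by their in-neighbourhood type in $A$ (of which there are only $2^k$). Choosing $N$ sufficiently large in terms of $s,t,k$ then yields the desired contradiction across all cases.
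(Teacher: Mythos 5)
Your proposed graph does not work: the complete split graph $K_k$ joined to an independent set of any size $N$ is, in fact, embeddable with the stated constraints, precisely in the subcase you flagged as problematic. For a concrete counterexample at $k=2$, $s=t=1$, take $\vec H_1$ to be an out-directed star with centre $x$ and leaves $z_1,\dots,z_N$ (arcs $\overrightarrow{xz_i}$), and $\vec H_2$ a single arc $\overrightarrow{y_1y_2}$; map $a_1\mapsto(x,y_1)$, $a_2\mapsto(x,y_2)$, $b_i\mapsto(z_i,y_2)$. Every required edge is present, $\tw(H_1)=\tw(H_2)=1=k-1$, and $\indeg(\vec H_1)=\indeg(\vec H_2)=1$, yet $N$ is arbitrary. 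The reason is exactly the degeneracy you noticed: when $A$ projects to a single vertex in $\vec H_1$ (so $\alpha\le s$), the attached vertices can all be pushed into the $P_2(A)$-strip along out-arcs, and only out-degree (which is unbounded) is consumed, not in-degree. So the proposal is not merely incomplete in a fixable subcase---its single building block already fails, and the fix is where all the work lies.

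Your two suggested repairs point in the right direction but are far from carried out. The paper's construction is indeed an iterated attachment, but with two essential extra ingredients that your sketch does not supply. First, there is a minimisation argument: assuming for contradiction that every treewidth-$k$ graph embeds with all cliques $(p,q)$-projected with $p,q\le k$, one takes $\ell$ minimal so that every clique of every such graph can be made to satisfy $|P_1(C)|+|P_2(C)|\le\ell$, which forces the existence of a base graph $G_0$ in which, in \emph{every} valid embedding, some $(k+1)$-clique is ``full'' ($|P_1|+|P_2|=\ell$). Second, one then builds $G_0\subseteq G_1\subseteq\dots\subseteq G_k$ by attaching $f(s,t,k)=2k^2\max\{s,t\}+k^2+1$ new vertices to \emph{every} $k$-clique at each stage, and the contradiction comes from a delicate induction (Claim~4.8 in the paper): at each step one produces a new clique $C_{i+1}$, one projection larger than $C_i$, by carefully choosing an attached vertex that is ``magnetic'' rather than ``diagonal'' or the wrong kind of magnetic, which relies on bookkeeping ``attractive'' and ``redundant'' vertices carried through from previous steps. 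After at most $k$ steps the $H_1$-projection exceeds $k$, a contradiction. Your double-counting idea corresponds roughly to the paper's Lemma~4.4 (at least one attached vertex must be diagonal or magnetic when there are many), but that lemma alone gives no contradiction; it is the recursion over a chain of cliques, enabled by the minimality of $\ell$ and the $k$-fold iterated construction, that closes the argument. Neither the Ramsey-type classification by in-neighbourhood type nor a one-shot gluing obviously replaces that machinery.
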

The main obstacle for embedding treewidth $k$ graphs in the directed product of factors with strictly smaller treewidth is the fact that treewidth $k$ graphs can have an arbitrary number of vertices adjacent to all vertices of a $k$-clique. Simple treewidth $k$ graphs avoid this problem, and we get the following positive result:
\begin{restatable}{thm}{restatedpubstw}
    \label{thm:dp_ub_stw}
    For each integer $k\geq 1$, every graph of simple treewidth $k$ is contained in $\vec H_1 \, \squareslash \, \vec H_2$ for some oriented digraphs $\vec H_1$ and $\vec H_2$ with $\tw(H_1), \tw(H_2) \leq k-1$, and $\indeg(\vec H_1), \indeg(\vec H_2) \leq k -1$.
\end{restatable}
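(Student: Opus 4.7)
The plan is to proceed by induction on $|V(G)|$, where we may assume without loss of generality that $G$ is an edge-maximal graph of simple treewidth $k$---that is, a \emph{simple $k$-tree}, built from $K_{k+1}$ by iterated stacking: at each step we add a new vertex adjacent to an existing $k$-clique $C$, subject to each $k$-clique serving as the base of at most one stacking after its first appearance in a $(k+1)$-bag. The base case $G=K_{k+1}$ (assuming $k\geq 2$; $k=1$ is trivial) is handled by taking $\vec H_1$ to be the tournament on $\{a_0,\ldots,a_{k-1}\}$ with arcs $\overrightarrow{a_i a_j}$ for $i>j$ (so $\tw(H_1)=\indeg(\vec H_1)=k-1$) and $\vec H_2$ to be the single arc $\overrightarrow{b \to b'}$; the embedding $u_i\mapsto(a_i,b)$ for $i<k$ and $u_k\mapsto(a_0,b')$ puts $k$ vertices of $K_{k+1}$ in a \emph{row} at $y=b$ and one \emph{stem} at the ``corner'', and a direct check verifies every edge is realized in the underlying graph of $\overrightarrow{\vec H_1 \, \squareslash \, \vec H_2}$: row--row edges as same-row $\vec H_1$-arcs, the stem--$u_0$ edge as the same-column $\vec H_2$-arc, and the stem--$u_j$ edges ($j\geq 1$) as diagonal arcs $(a_j,b)\to(a_0,b')$ arising from $\overrightarrow{a_j a_0}$ and $\overrightarrow{b\to b'}$.

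For the inductive step, pick $v$ corresponding to a leaf of the clique tree of $G$, so that $G-v$ is a simple $k$-tree and $N_G(v)=C$ is a $k$-clique contained in a unique parent bag $B=C\cup\{u\}$ of $G-v$. I would strengthen the induction hypothesis to maintain the invariant that every $(k+1)$-bag of $G-v$ is embedded in an L-pattern $\{(a_0,b_B),\ldots,(a_{k-1},b_B),(a_0,b_B')\}$, with $V(\vec H_1)$ fixed at $\{a_0,\ldots,a_{k-1}\}$ and $\vec H_1$ the reverse-transitive tournament throughout. Then $v$ is placed according to where $u$ sits in $B$: if $u=(a_0,b_B')$ is the stem, put $v$ at $(a_0,b_{new})$ for a fresh $y$-coordinate and add the arc $\overrightarrow{b_B \to b_{new}}$ to $\vec H_2$; if $u=(a_j,b_B)$ with $j\geq 1$, put $v$ at $(a_j,b_B')$ without modifying either factor; if $u=(a_0,b_B)$ is the corner, put $v$ at $(a_l,b_B')$ for some $l\in\{1,\ldots,k-1\}$ whose slot is unoccupied by a sibling.

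The main obstacle is verifying that these placements preserve the invariant (possibly after designating a different vertex as the stem of the new bag $B':=C\cup\{v\}$), that coordinate collisions never occur despite up to $k+1$ siblings competing for slots in column $(\ast,b_B')$, and that all new $\vec H_2$-arcs are oriented consistently with the fixed reverse-transitive orientation of $\vec H_1$ so that every diagonal edge of $G$ is realized in the product. The simple treewidth hypothesis is exactly what makes this work: each $k$-face of $B$ is extended by at most one child, capping the sibling count at $k+1$, which matches the $k-1$ free slots in column $(\ast,b_B')$ plus the ``overflow'' slot in a fresh $y$-coordinate. To control indegrees, orient each new $\vec H_2$-arc from the older to the newer $y$-coordinate, so that the incoming arcs at any $y$-coordinate arise only from stackings whose parent bag uses it as the row $y$-coordinate---the simplicity of $G$ then caps this count by $k-1$, matching the fixed indegree $k-1$ of $\vec H_1$. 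Without the simplicity hypothesis, \cref{thm:dp_lb} shows that such a bounded-indegree, bounded-treewidth embedding is impossible, so simplicity is precisely the extra structure that carries the construction through.
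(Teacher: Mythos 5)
Your proposal has a genuine gap, and in fact the overall plan cannot work as stated. The decisive constraint you impose is that $V(\vec H_1)$ is \emph{fixed} at $\{a_0,\dots,a_{k-1}\}$ for the whole construction while $\vec H_2$ grows as an out-tree. That makes the product a subgraph of $K_k\boxtimes T$ for a tree $T$, which is far more restrictive than what the theorem allows. Already for $k=2$ this fails: subgraphs of $K_2\boxtimes T$ have tree-partition-width at most $2$ (take the columns as parts), but the fan $F_n$ (a path $v_1\cdots v_n$ plus a vertex $u$ adjacent to all $v_i$) is a maximal outerplanar graph, hence a simple $2$-tree, and for $n\geq 6$ it has tree-partition-width at least $3$. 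Indeed, in any width-$2$ tree-partition, every $v_i$ not sharing $u$'s part lies in a part adjacent to $u$'s part, and since a tree has no triangles, consecutive path vertices among these must share a part, forcing all of them into one part of size at most $2$. So no choice of $H_2$ of treewidth $\leq 1$ can rescue the construction when $\vec H_1$ is pinned to $K_k$.

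The same rigidity manifests directly when you try to verify the placement. Consider the case ``$u=(a_j,b_B)$ with $j\geq 1$, put $v$ at $(a_j,b_B')$''. The clique $C=B-u$ contains the corner $(a_0,b_B)$, so you need the edge $(a_j,b_B')(a_0,b_B)$ to lie in $\vec H_1 \, \squareslash \, \vec H_2$. That edge would come from the arc $(a_j,b_B')\to(a_0,b_B)$, requiring $\overrightarrow{a_j a_0}$ and $\overrightarrow{b_B' b_B}$, or the arc $(a_0,b_B)\to(a_j,b_B')$, requiring $\overrightarrow{a_0 a_j}$ and $\overrightarrow{b_B b_B'}$. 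With your orientations ($\overrightarrow{a_i a_j}$ iff $i>j$, and $\vec H_2$-arcs pointing ``old to new'', so $\overrightarrow{b_B b_B'}$), neither pair is satisfied. This is exactly the ``orientation consistency'' obstacle you flagged in the last paragraph, and it is not an incidental loose end: because new $\vec H_2$-arcs always point old-to-new and $\vec H_1$-arcs always point high-index-to-low-index, a freshly placed vertex $(a_j,\text{new }y)$ is diagonally adjacent only to $(a_i,\text{old }y)$ with $i>j$, never with $i<j$, so any base clique containing a vertex $(a_i,\text{old }y)$ with $i<j$ cannot be served. Finally, even setting aside embedding validity, the L-pattern invariant is simply false for $k\geq 3$: stacking opposite a non-corner row vertex gives a bag with $k-1$ vertices at one $y$-coordinate and $2$ at another, which is not an L-pattern and cannot be made one by re-designating the stem.

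The paper resolves all of this by allowing $\vec H_1$ to grow: when the base $k$-clique $C$ is not diagonal (so $|P_1(C)|\leq k-1$), it adds a fresh vertex $x$ to $\vec H_1$ with in-arcs from $P_1(C)$ (at most $k-1$ of them, keeping indegree and treewidth bounded) and embeds the new vertex at $(x,w_q)$ where $w_q$ is the sink of $P_2(C)$. The invariant kept is that every $k$-clique's two projections induce transitive tournaments (rather than your rigid L-shape), together with bookkeeping of ``big siblings'' of diagonal $k$-cliques; when $C$ is diagonal, the simple-treewidth hypothesis (via the $K_{\overline{k},3}$ obstruction) guarantees that one of the two big siblings of the big diagonal edge is unused, which is where the new vertex goes. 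Your instinct that simplicity caps the sibling competition is in the right spirit, but that cap must be exploited through the big-sibling accounting and a growing $\vec H_1$, not through a fixed $k$-vertex $\vec H_1$.
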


Since every graph with treewidth $k$ has simple treewidth at most $k+1$, \cref{thm:dp_lb} implies that the treewidth bounds in \cref{thm:dp_ub_stw} are tight:
\begin{cor}
    \label{cor:dp_lb_stw}
    For any integers $s, t, k \geq 1$, there is a graph of simple treewidth $k$ that is not contained in $\vec H_1 \, \squareslash \, \vec H_2$ for any digraphs $\vec H_1$ and $\vec H_2$ with $\indeg(\vec H_1) \leq s, \indeg(\vec H_2) \leq t$ and $\tw(H_1), \tw(H_2) \leq k -2$.
\end{cor}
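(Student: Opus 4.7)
The plan is to deduce the corollary directly from \cref{thm:dp_lb} using only the elementary inequality $\stw(G) \leq \tw(G)$ recorded in \cref{ss:treewidth}. No new graph construction is required: \cref{thm:dp_lb} already supplies the hard object, a treewidth-$k$ obstruction that resists every directed product with bounded-indegree factors whose underlying graphs have treewidth strictly less than $k$, and it remains only to reinterpret this obstruction through the lens of simple treewidth.

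Concretely, given integers $s, t, k \geq 1$, I invoke \cref{thm:dp_lb} with exactly these parameters to produce a graph $G$ with $\tw(G) \leq k$ such that $G \not\subsetsim \vec H_1 \, \squareslash \, \vec H_2$ for any digraphs $\vec H_1, \vec H_2$ with $\indeg(\vec H_1) \leq s$, $\indeg(\vec H_2) \leq t$, and $\tw(H_1), \tw(H_2) \leq k-1$. Since $\stw(G) \leq \tw(G) \leq k$, the graph $G$ has simple treewidth at most $k$. Moreover, any digraphs $\vec H_1, \vec H_2$ satisfying the hypotheses of the corollary (in particular $\tw(H_i) \leq k-2$) automatically satisfy the stronger $\tw(H_i) \leq k-1$, so the non-containment statement supplied by \cref{thm:dp_lb} applies verbatim and $G$ is precisely the required witness.

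There is essentially no obstacle to this argument: once \cref{thm:dp_lb} is in hand, the corollary is a one-line deduction consisting only of a parameter match and one application of $\stw \leq \tw$. The sole edge case worth a line of attention is $k = 1$, where the bound $\tw(H_i) \leq k - 2 = -1$ is vacuous and any graph of simple treewidth $1$ (say, a single edge) trivially satisfies the conclusion. All of the real difficulty is concentrated in \cref{thm:dp_lb} itself.
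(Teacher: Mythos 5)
There is a genuine gap, and it stems from the inequality you invoke. You write ``Since $\stw(G) \leq \tw(G) \leq k$,'' relying on the bound $\stw(G) \leq \tw(G)$ recorded in the preliminaries. That bound is in fact stated incorrectly in the paper: the true relationship is $\tw(G) \leq \stw(G) \leq \tw(G) + 1$ (a $k$-simple tree-decomposition is in particular a width-$k$ tree-decomposition, so $\tw \leq \stw$, not the other way round; indeed $K_{\overline{k},3}$ has treewidth $k$ but simple treewidth $k+1$). The paper's own derivation of the corollary uses the correct direction, saying ``every graph with treewidth $k$ has simple treewidth at most $k+1$.'' With the correct inequality, applying \cref{thm:dp_lb} at parameter $k$ produces a graph $G$ with $\tw(G) = k$ and hence $\stw(G) \in \{k, k+1\}$ --- and the construction behind \cref{thm:dp_lb} attaches $f(s,t,k) \geq k^2+1 \geq 3$ new vertices to each $k$-clique, so the resulting graph contains $K_{\overline{k},3}$ and genuinely has simple treewidth $k+1$. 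Your candidate witness therefore does not have simple treewidth $k$, and the proof fails.

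The repair is a one-parameter shift, which is exactly what the paper does: apply \cref{thm:dp_lb} at $k' := k-1$ (for $k \geq 2$) to obtain a graph $G$ with $\tw(G) = k-1$ not contained in any $\vec H_1 \, \squareslash \, \vec H_2$ with $\indeg(\vec H_1) \leq s$, $\indeg(\vec H_2) \leq t$, and $\tw(H_1), \tw(H_2) \leq k-2$. Then $\stw(G) \leq \tw(G) + 1 = k$, and the non-containment condition matches the corollary's hypothesis verbatim --- no further restriction of the factor class is needed, in contrast to your argument where the factor bound ($k-1$ vs.\ $k-2$) was stronger than required precisely because you started one level too high. Your handling of the $k=1$ edge case (where the factor bound is vacuous and any simple-treewidth-$1$ graph works) is correct and should be retained.
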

\cref{thm:dp_ub_stw} implies \cref{thm:outerplanar,thm:apollonian}. However, we prove \cref{thm:outerplanar} independently to give the reader a better intuition of the induction step used in the proof. \cref{thm:outerplanar} is interesting even in the strong product setting, since it is not implied by \cref{prop:sp_ub}.

\textbf{Outline of the paper.} In \cref{s:preliminaries}, we define notation used throughout the proofs. In \cref{s:stw_graphs}, we prove \cref{thm:outerplanar} and \cref{thm:dp_ub_stw}, which are the positive results for graphs of bounded simple treewidth. Then in \cref{s:optimal_bounds} we show the negative result of \cref{thm:dp_lb}. In \cref{s:stw_strong_products} we prove \cref{thm:sp_lb_stw}.

\section{Preliminaries}
\label{s:preliminaries}
We adopt the following notation for the rest of the paper. Let $G$ be a graph, and let $\vec H_1$ and $\vec H_2$ be digraphs. The statement $G \subsetsim \vec H_1 \, \squareslash \, \vec H_2$ is equivalent to saying that there is an injective map $\alpha: V(G) \to V(H_1) \times V(H_2)$ such that if $g_1 g_2 \in E(G)$, then ${\alpha(g_1) \alpha(g_2) \in E(\vec H_1 \, \squareslash \, \vec H_2)}$.
Whenever $G \subsetsim \vec H_1 \, \squareslash \, \vec H_2$, we will implicitly fix such an \defn{embedding} $\alpha$ and identify a vertex $g \in V(G)$ with the pair $\alpha(g) = (v,w) \in V(H_1) \times V(H_2)$. A vertex $(v,w) \in V(H_1) \times V(H_2)$ is \defn{used} if $\alpha(g) = (v,w)$ for some $g \in V(G)$.
For $x= (u, v) \in V(G)$, we refer to $u$ and $v$ as the \defn{$H_1$-projection} and the \defn{$H_2$-projection} of $x$ respectively. Define $\defn{$P_1(x)$}:=u$ and $\defn{$P_2(x)$}:=v$. For a subset $S \subseteq V(G)$, define $\defn{$P_1(S)$} := \{ P_1(x) : x \in S \} \subseteq V(H_1)$ and $\defn{$P_2(S)$} := \{ P_2(x) : x \in S \} \subseteq V(H_2)$.

A clique $C$ is a \defn{$k$-clique} if $|C| = k$. If $C \subseteq V(G)$ is a clique with $|P_1(C)| = p$ and $|P_2(C)| = q$, then we say $C$ is \defn{$(p,q)$-projected}.
We call a $k$-clique $C \subseteq V(G)$ \defn{diagonal} if it is $(k,k)$-projected. Equivalently, a clique $C$ is diagonal if every edge in $C$ is diagonal.

The above definitions all relate to embeddings in directed products ($G \subsetsim \vec H_1 \, \squareslash \, \vec H_2$), but we use their natural analogues in the strong product case ($G \subsetsim H_1\boxtimes H_2$).

If $G$ is a graph with induced subgraphs $G_1$, $G_2$, and $S$, such that $G = G_1 \cup G_2$ and $S = G_1 \cap G_2$, then we say that $G$ arises by \defn{pasting} $G_1$ and $G_2$ along $S$. For $v \in V(G)$, let \defn{$N_G(v)$} denote the neighbours of $v$ in $G$. If $S$ is a set, then \defn{$S+v$} denotes $S \cup \{ v \}$, and \defn{$S-v$} denotes $S \setminus \{ v \}$. All other notation is standard in graph theory; see \citet{Diestel05}.

For an integer $k\geq 0$, a \defn{$k$-tree} is a graph that can be constructed, starting from $K_{k+1}$, by repeatedly adding a new vertex $v$ so that its neighbourhood is an existing $k$-clique. A \defn{simple $k$-tree} is a graph that can be constructed, starting from $K_{k+1}$, by repeatedly adding a new vertex $v$ so that its neighbourhood is an existing $k$-clique, as long as the same $k$-clique is not used more than once.
\cref{lem:k_tree} is well-known, and characterises edge-maximal treewidth $k$ graphs.
\begin{lem}[Theorem 1, \citep{Bodlaender98}]
    \label{lem:k_tree}
    Every graph with treewidth at most $k$ is a subgraph of a $k$-tree.
\end{lem}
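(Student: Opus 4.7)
The plan is to take a tree-decomposition of $G$ of width at most $k$, normalise it so that every bag has exactly $k+1$ vertices and adjacent bags differ by exactly one element, then add all missing edges within each bag and show the resulting supergraph is a $k$-tree.

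First I would show that any graph $G$ with $\tw(G)\leq k$ admits a tree-decomposition $(T,\mathcal{B})$ of width at most $k$ that is \emph{smooth}, meaning $|B_x| = k+1$ for every $x \in V(T)$ and $|B_x \cap B_y| = k$ for every edge $xy \in E(T)$. Starting from any width-$k$ tree-decomposition, if some bag $B_x$ has fewer than $k+1$ vertices I add arbitrary vertices from neighbouring bags (or duplicate $x$ along edges) until $|B_x|=k+1$; this preserves the two tree-decomposition axioms and does not increase width. Then for every edge $xy \in E(T)$ with $|B_x \cap B_y| = k - r$ for some $r \geq 1$, I subdivide $xy$ by introducing $r$ new bags that interpolate between $B_x$ and $B_y$ by swapping out one vertex at a time; again width and the tree-decomposition axioms are preserved. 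Connected graphs are handled first, and for disconnected $G$ one just joins tree-decompositions of the components by an edge and pads.

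Next, let $G^+$ be obtained from $G$ by adding every edge $uv$ such that $\{u,v\} \subseteq B_x$ for some $x \in V(T)$. Clearly $G \subseteq G^+$. It remains to show that $G^+$ is a $k$-tree. I would root $T$ at an arbitrary node $r$ and proceed by induction on $|V(T)|$. The base case is $|V(T)| = 1$, where $G^+ = K_{k+1}$ by smoothness. For the inductive step, pick a leaf $\ell$ of $T$ distinct from $r$ with parent $y$. By smoothness, $B_\ell = (B_y \setminus \{w\}) \cup \{v\}$ for a unique $v \in B_\ell \setminus B_y$ and unique $w \in B_y \setminus B_\ell$. By the third axiom (connectedness of the subtrees indexing vertices), $v$ appears in no other bag. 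Thus in $G^+$, the neighbourhood of $v$ is exactly $B_\ell \setminus \{v\}$, which is a $k$-clique in $G^+ - v$. Deleting $v$ yields the graph built from the smooth tree-decomposition $(T - \ell, (B_x)_{x\neq\ell})$, which by induction is a $k$-tree, and then re-attaching $v$ with neighbourhood this $k$-clique preserves the $k$-tree property. This gives $G \subseteq G^+$ with $G^+$ a $k$-tree.

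The main obstacle is really the normalisation step: one has to be careful that padding small bags and subdividing edges with intermediate bags (each a $(k+1)$-set differing from its neighbours in exactly one vertex) can indeed be done while keeping the induced subtrees of each vertex connected. The rest of the argument is a clean leaf-stripping induction, matching the $k$-tree construction rule exactly.
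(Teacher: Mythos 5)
Your proof is correct. A couple of small observations. First, the paper does not prove this lemma at all---it is imported by citation from Bodlaender's survey---so there is no paper-internal proof to compare against; you have supplied a self-contained argument for a quoted black box. Second, the route you chose (smooth the tree-decomposition so that every bag has exactly $k+1$ vertices and adjacent bags differ in exactly one vertex, fill in each bag to a clique, then strip leaves by induction) is essentially the same machinery the paper develops at length in its \hyperref[appendix:B]{Appendix B} to prove the harder simple-treewidth analogue (\cref{thm:simple_treewidth}): there the authors also pass through a normal $k$-smooth tree-decomposition and build the $k$-tree (there, a \emph{simple} $k$-tree) bag-by-bag via a rooted ordering. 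Your argument is the natural specialisation of that technique once the $k$-simple constraint is dropped, and the leaf-stripping induction is a clean substitute for their BFS-ordering construction.

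Two edge cases you should make explicit rather than leave implicit. (i) The smoothing step needs $|V(G)| \geq k+1$; when $|V(G)| \leq k$ the statement is trivial because $G$ is a subgraph of $K_{k+1}$, which is itself a $k$-tree. (ii) In the padding phase, ``add arbitrary vertices from neighbouring bags'' only makes progress if some neighbouring bag contains a vertex outside $B_x$; when instead $B_y \subseteq B_x$ for every neighbour $y$, one should first contract such edges (merging $y$ into $x$) until the decomposition is normal in the sense of the paper, after which padding proceeds. Both points are routine, but stating them would close the only gaps in an otherwise complete argument. The rest---in particular the observation that the leaf's private vertex $v$ lies in exactly one bag, so its $G^+$-neighbourhood is exactly the $k$-clique $B_\ell \setminus \{v\}$, and that deleting $v$ leaves a smooth decomposition of $G^+ - v$---is exactly right.
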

Recall that we define a graph $G$ to have $\stw(G) \leq k$ if $G$ has a $k$-simple tree-decomposition, where a tree-decomposition $(T, \mathcal{B})$ is $k$-simple if it has width at most $k$, and for each set $S \subseteq V(G)$ of $k$ vertices, $|\{x \in V(T): S \subseteq B_x\}| \leq 2$. This definition is due to \citet*{HMSTW}.
Several authors \citep{KU12,KV12,MJP06,Wulf16} instead define a graph $G$ to have $\stw(G) \leq k$ if and only if $G$ is a subgraph of a simple $k$-tree.
It is not straightforward to see that the definitions are equivalent.
We establish the equivalence of the two definitions by proving the following lemma in \ref{appendix:B}.
\begin{restatable}{lem}{restatelemstw}
    \label{lem:edge_max_stw_k}
    Every graph has simple treewidth at most $k$ if and only if it is a subgraph of a simple $k$-tree.
\end{restatable}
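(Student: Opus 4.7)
The plan is to prove both directions of the equivalence.

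For the $(\Leftarrow)$ direction, I would start with a simple $k$-tree $H \supseteq G$ built by adding vertices $v_{k+2}, \ldots, v_n$ after the initial $K_{k+1}$ on $\{v_1,\ldots,v_{k+1}\}$, with each $v_i$ attached to a $k$-clique $C_i \subseteq \{v_1,\ldots,v_{i-1}\}$ and no $k$-clique used twice. I would define a tree-decomposition with a root bag $B_0 = \{v_1,\ldots,v_{k+1}\}$ and a bag $B_i = C_i \cup \{v_i\}$ for each added vertex, connecting $B_i$ to any earlier bag containing $C_i$. Every bag is a $(k+1)$-clique, so any $k$-set in a bag is automatically a $k$-clique. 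For any such $k$-clique $S$, letting $i^*$ be the largest index with $v_{i^*} \in S$, the vertices of $S \setminus \{v_{i^*}\}$ are earlier neighbours of $v_{i^*}$ and hence lie in $C_{i^*}$ when $i^* \geq k+2$ (so $S \subseteq B_{i^*}$), or $S \subseteq B_0$ when $i^* \leq k+1$. Any other bag containing $S$ must be a $B_j$ with $C_j = S$, and the simple condition allows at most one such $j$. This gives the $k$-simple property.

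For the $(\Rightarrow)$ direction, I would start from a $k$-simple tree-decomposition $(T, \mathcal{B})$ of $G$ of width $k$ and construct a simple $k$-tree $H \supseteq G$. First fill in edges within each bag to obtain $G^* \supseteq G$ (preserving the tree-decomposition), then pad each bag $B_x$ to size exactly $k+1$ using fresh dummies private to $B_x$; this padding preserves $k$-simplicity since any new $k$-set involving a private dummy lies in only one bag. Root $T$ at a node $r$, initialise $H$ as the $(k+1)$-clique $B_r$, and process the remaining nodes in BFS order. At each non-root node $x$, process the vertices of $I_x := B_x \setminus B_{p(x)}$ one at a time, attaching each $v \in I_x$ to a $k$-clique $C_v$ of the current $H$ containing $v$'s already-processed $G^*$-neighbours (extended if necessary by further vertices of $B_{p(x)}$ or earlier-processed members of $I_x$ to reach size $k$). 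The key verification is that no $k$-clique is reused as an attachment: if $C_v = C_{v'}$ for distinct processed $v, v'$, a case analysis on their positions in $T$ produces at least three distinct bags of $(T, \mathcal{B})$ containing $C := C_v$ --- for instance, when $v,v'$ are introduced at siblings $x, x'$ each with intersection of size $k$ with the common parent bag, the three bags $B_{p(x)}, B_x, B_{x'}$ all contain $C$ --- contradicting $k$-simplicity.

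The main technical obstacle lies in the $(\Rightarrow)$ direction, in verifying that the attachment procedure always yields pairwise distinct $k$-cliques. The $k$-simple property is used precisely to rule out forced collisions, by forcing any hypothetical collision to exhibit a $k$-set contained in three bags; in flexible cases (when a vertex's mandatory already-processed neighbour set has size strictly less than $k$), the freedom in choosing extensions provides enough distinctness. A careful choice of root $r$ --- placing any $k$-clique that sits in exactly two bags so that one of those bags is the root, letting the initial $K_{k+1}$ absorb that side of the pair rather than processing it via attachment --- handles the remaining delicate forced-attachment corner cases.
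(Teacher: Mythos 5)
Your $(\Leftarrow)$ direction is correct and essentially matches the paper's (3)$\Rightarrow$(1) argument: bags $B_0 = \{v_1,\dots,v_{k+1}\}$ and $B_i = C_i + v_i$, with the observation that for a $k$-clique $S$ the bag with the largest-indexed vertex of $S$ is forced and any other bag must be some $B_j$ with $C_j = S$, of which there is at most one by the simple-$k$-tree condition.

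The $(\Rightarrow)$ direction is where the real work lies, and your sketch has a genuine gap. The paper's key technical step is a lemma you do not have: a graph with $\stw(G)\le k$ and $|V(G)|\ge k+1$ admits a \emph{normal $k$-simple $k$-smooth} tree-decomposition, i.e., one in which every bag has size exactly $k+1$ and adjacent bags share exactly $k$ vertices. This is proved by an extremal argument: among rooted normal $k$-fine $k$-simple tree-decompositions, one maximising the potential $\sum_x (\depth(x)+1)|B_x|$ is shown (via several intermediate lemmas) to be $k$-smooth. Once every adjacent pair of bags differs by exactly one vertex, the simple $k$-tree is obtained by adding one vertex per bag in BFS order, with a forced attachment $k$-clique $B_x \cap B_{p(x)}$, and $k$-simplicity of the decomposition immediately forbids repeated attachment cliques (a repeated clique would lie in at least three bags).

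Your approach instead pads bags with private dummies to size $k+1$ (which does preserve $k$-simplicity) and then processes possibly several vertices per bag. Two problems arise. First, when $|I_x|>1$, only the last-processed vertex of $I_x$ has a \emph{forced} attachment $k$-clique $B_x - v$; the earlier ones are extended ad hoc by vertices of $B_{p(x)}$ or $I_x$, and it is not clear the chosen extensions always form a $k$-clique in the partially-built $k$-tree (adjacency between a vertex of $B_{p(x)}\setminus B_x$ and an earlier-processed member of $I_x$ is not automatic, since those cross-edges depend on earlier attachment choices). Second, and more seriously, your claim that any collision of attachment cliques "produces at least three distinct bags containing $C$" is not true in general. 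If $x' = p(x)$, $|I_x|=1$ with $I_x=\{v\}$ and $S:=B_x\cap B_{x'}=B_x-v$ has size $k$, and the last-processed vertex $v'$ of $I_{x'}$ satisfies $B_{x'}-v'=S$ while $|I_{x'}|\ge 2$, then the forced cliques of $v$ and $v'$ coincide, but $S$ lies in only two bags ($B_x$ and $B_{x'}$), since $S\not\subseteq B_{p(x')}$. Escaping this requires reordering $I_{x'}$ so that some other vertex of $I_{x'}$ is processed last, but then one must verify that the new forced clique avoids all other collisions simultaneously---a global constraint-satisfaction argument that your sketch does not supply. Your closing remark about "a careful choice of root" to "handle the remaining delicate forced-attachment corner cases" is precisely where the difficulty is being swept under the rug. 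In short, the $(\Rightarrow)$ direction needs the normal-$k$-smooth normalisation (or some equivalent structural lemma) before the BFS construction becomes clean; as written, the attachment and collision-avoidance argument is not a proof.
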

Note that \cref{lem:edge_max_stw_k} is false for infinite graphs.
For example, the disjoint union of two infinite $2$-way paths is edge-maximal with simple treewidth $1$, but is not a subgraph of a simple $1$-tree \citep*{HMSTW}\footnote{Under a reasonable extension of the definition of a simple $1$-tree to the infinite case.}.

Let $\mathdefn{K_{\overline{k}, 3}}$ be the graph obtained from the complete bipartite graph $K_{k,3}$ by making all the vertices in the part with $k$ vertices pairwise adjacent. Let $S \subseteq V(K_{\overline{k}, 3})$ be the part with $k$ vertices and let $\{b_1, b_2, b_3\} = V(K_{\overline{k}, 3}) \setminus S$. For each $i \in \{1, 2,3\}$, $(S + b_i)$ is a $(k+1)$-clique. Therefore, for any tree-decomposition of width $k$ of $K_{\overline{k}, 3}$, $S$ is contained in at least three bags. Consequently, if $G$ contains $K_{\overline{k}, 3}$, then $G$ cannot have a $k$-simple tree-decomposition, and $\stw(G) > k$.

\section{Simple Treewidth Graphs}

\label{s:stw_graphs}
In this section, we prove \cref{thm:outerplanar,thm:dp_ub_stw}, which are the positive results for graphs of bounded simple treewidth.

\subsection{Outerplanar Graphs}
\label{ss:outerplanar}
If $\vec H_1$ and $\vec H_2$ are digraphs and $(v_i, w_i)(v_j, w_j) \in E(\vec H_1 \, \squareslash \, \vec H_2)$ is a diagonal edge, then $(v_i, w_j)$ and $(v_j, w_i)$ are the \defn{siblings} of $(v_i, w_i)(v_j, w_j)$. If $G \subsetsim \vec H_1 \, \squareslash \, \vec H_2$, we call the set $\{ (v_i, w_j), (v_j, w_i) : (v_i, w_i)(v_j, w_j) \in E(G) \text{ is a diagonal edge}\} $ the \defn{$G$-siblings}. A sibling is used or unused; recall the definition of used in \cref{s:preliminaries}.

\outerplanar*

\begin{proof}
    If $|V(G)| \leq 2$, then the theorem holds with $\vec T_{1}$ and $\vec T_2$ defined as follows: $V(\vec T_{1}) = \{1,2\}$, $E(\vec T_{1}) = \{\overrightarrow{12}\}$, $V(\vec T_2) = \{ 1 \}$ and $E(\vec T_2) = \varnothing$.
    Let $G$ be an edge-maximal outerplanar graph with at least three vertices. Then $G$ can be constructed recursively from triangles by pasting triangles along edges on the outerface (see \citep[Proposition~8.3.1]{Diestel05}). Let $G_t$ be the outerplanar graph at the $t$-th step of this process. So $G_1$ is a triangle.

    We recursively construct oriented trees $\vec T_{1,t}$ and $\vec T_{2,t}$ such that the following invariants are maintained:
    \begin{enumerate}[label=\textbf{(I\arabic*):},leftmargin=4em]
        \item $G_t \subsetsim \vec T_{1,t} \, \squareslash \, \vec T_{2,t}$.
        \item For each $i \in \{1,2\}$, $\indeg(\vec T_{i, t}) \leq 1$.
        \item For each diagonal edge $(v_i, w_i)(v_j, w_j)$ in $G_t$, each used sibling of $(v_i, w_i)(v_j, w_j)$ is adjacent to $(v_i, w_i)$ and $(v_j, w_j)$ in $G$.
        \item No two diagonal edges in $G_t$ share a common unused sibling.
    \end{enumerate}
    For $t=1$, $G_1$ is a triangle. We define $\vec T_{i,1}$ by $V(\vec T_{i,1}) = \{1, 2\}$ and $E(\vec T_{i,1}) = \{\overrightarrow{12}\}$ for each $i \in \{1, 2\}$. Embed $V(G_1)$ at the coordinates $(0,0)$, $(1,0)$, $(1,1)$. Then $\vec T_{1,1}$ and $\vec T_{2,1}$ satisfy (I2). By definition of the directed product, (I1) holds. The only diagonal edge in $G_1$ is $(0,0)(1,1)$, and the only used sibling of $(0,0)(1,1)$ is $(1,0)$, so (I3) and (I4) hold.

    Suppose by induction that we have constructed $G_t \subsetsim \vec T_{1, t} \, \squareslash \, \vec T_{2, t}$ satisfying (I1)--(I4). Let $g_{t+1}$ be the new vertex in $V(G_{t+1}) \setminus V(G_t)$, and its neighbours are $(v_i, w_i)$ and $(v_j, w_j)$. The edge $(v_i, w_i)(v_j, w_j)$ is a diagonal edge or not a diagonal edge.

    \textbf{Case 1:} Suppose $(v_i, w_i) (v_j, w_j)$ is not diagonal, with $v_i = v_j$ or $w_i = w_j$. Without loss of generality, assume $v_i = v_j$, and set $v:= v_i = v_j$. Then $w_i w_j \in E(T_{2, t})$. Define $\vec T_{2, t+1}:= \vec T_{2, t}$ and $\vec T_{1, t+1}$ to be the directed tree obtained from $\vec T_{1, t}$ by adding a new vertex $x$ as a leaf adjacent to $v$, and directing the arc $vx$ away from $v$ and towards $x$. Then (I2) holds.
    Without loss of generality, assume $\overrightarrow{w_i w_j} \in E(\vec T_{2, t})$. Embed $g_{t+1}$ at $(x, w_j)$. To verify (I1), we need to check that $(v, w_j) (x, w_j)$ and $(v, w_i) (x, w_j)$ are embedded. $(v, w_j) (x, w_j)$ is embedded because $vx \in E(T_{1, t+1})$, whereas $(v, w_i) (x, w_j)$ is embedded as a diagonal edge.

    We check (I3) for $G_{t+1}$. Note that $(x, w_j)$ is not a sibling of any existing diagonal edge in $G_t$ since $x$ is a new vertex, so (I3) holds for each existing diagonal edge in $G_t$. The only diagonal edge in $G_{t+1}$ not in $G_t$ is $(v, w_i) (x, w_j)$. Since $x$ is a new vertex, the only used sibling of this edge is $(v, w_j)$, which is adjacent to $(v, w_i)$ and $(x, w_j)$ in $G_{t+1}$ by construction. Lastly, (I4) holds because the only unused $G_{t+1}$-sibling that is not a $G_t$-sibling is $(x,w_i)$, and $(x,w_i)$ is not a sibling for any existing diagonal edge in $G_t$ since $x$ is a new vertex.

    \textbf{Case 2:} Suppose $(v_i, w_i)(v_j, w_j)$ is diagonal, with $\overrightarrow{v_iv_j} \in E(\vec T_{1, t})$ and $\overrightarrow{w_i w_j} \in E(\vec T_{2, t})$. We claim that $(v_i, w_j)$ or $(v_j, w_i)$ is not used in $G_t$. Otherwise, by (I3), $(v_i, w_j)$ and $(v_j, w_i)$ are adjacent to both $(v_i, w_i)$ and $(v_j, w_j)$ in $G_t$. Since $g_{t+1}$ is adjacent to $(v_i, w_i)$ and $(v_j, w_j)$ as well, $K_{\overline{2},3}$ is a subgraph of $G_{t+1}$, contradicting the outerplanarity of $G_{t+1}$. Without loss of generality, assume $(v_i, w_j)$ is not used in $G_t$. Embed $g_{t+1}$ at $(v_i, w_j)$ and define $\vec T_{1, t+1}:= \vec T_{1, t}$ and $\vec T_{2, t+1}:= \vec T_{2, t}$. Then (I2) holds. To show (I1), we need to check that $(v_i, w_j)(v_i, w_i)$ and $(v_i, w_j)(v_j, w_j)$ are embedded. $(v_i, w_j)(v_i, w_i)$ is embedded because $w_iw_j \in E(T_{2, t})$, and $(v_i, w_j)(v_j, w_j)$ is embedded because $v_i v_j \in E(T_{1,t})$.

    We check (I3) for $G_{t+1}$. All diagonal edges in $G_{t+1}$ are in $G_t$, so (I3) holds if we show that apart from $(v_i, w_j)$, an unused $G_t$-sibling remains unused in $G_{t+1}$. The only used $G_{t+1}$-sibling that is not a $G_t$-sibling is $(v_i, w_j)$, and it is an unused sibling of $(v_i, w_i)(v_j, w_j)$ in $G_t$. By (I4) for $G_t$, no other diagonal edge in $G_t$ has $(v_i, w_j)$ as a sibling. This proves (I3). Lastly, (I4) holds for $G_{t+1}$ because all diagonal edges in $G_{t+1}$ are in $G_t$, and each unused $G_{t+1}$-sibling is in $G_{t}$.
\end{proof}

\subsection{The General Case}
\label{ss:general}
The key idea of \cref{thm:outerplanar} is that if the edge $(v_i, w_i) (v_j, w_j)$ is diagonal, then we embed $g_{t+1}$ in $(v_i, w_j)$ or $(v_j, w_i)$ to avoid creating a triangle in one of the factors. This is possible because of the additional inductive hypotheses (I3) and (I4).
In this section, we prove \cref{thm:dp_ub_stw} using a generalisation of the above technique to `diagonal cliques', defined below.

Let $\vec H$ be a digraph. The arcs of $\vec H$ are directed \defn{transitively} if $\overrightarrow{v_i v_j} \in E(\vec H)$ and $\overrightarrow{v_j v_k} \in E(\vec H)$ implies $\overrightarrow{v_i v_k} \in E(\vec H)$. A \defn{sink} in a digraph $\vec H$ is a vertex with outdegree $0$. A \defn{tournament} $\vec T$ is a digraph where exactly one of $\overrightarrow{uv}, \overrightarrow{vu}$ is in $E(\vec T)$ for any distinct vertices $u, v \in V(\vec T)$.
\begin{obs}
    Every transitively directed digraph has a sink.
\end{obs}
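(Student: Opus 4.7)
The plan is to use the transitivity hypothesis to build a strictly decreasing sequence of out-neighbourhoods, which must terminate at a sink because $\vec H$ is finite. Write $N^+(v)$ for the out-neighbourhood of $v$ in $\vec H$.

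First, pick any vertex $v_0 \in V(\vec H)$. If $N^+(v_0) = \varnothing$, then $v_0$ is a sink and we are done. Otherwise choose $v_1 \in N^+(v_0)$; the key step is to argue that $N^+(v_1) \subsetneq N^+(v_0)$. For any $u \in N^+(v_1)$, transitivity applied to $\overrightarrow{v_0 v_1}$ and $\overrightarrow{v_1 u}$ gives $\overrightarrow{v_0 u} \in E(\vec H)$; since $\vec H$ has no self-loops, $u \neq v_0$, and hence $u \in N^+(v_0)$. The inclusion is strict because $v_1 \in N^+(v_0) \setminus N^+(v_1)$: the alternative $v_1 \in N^+(v_1)$ would be a self-loop. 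Iterating produces a sequence $v_0, v_1, v_2, \dots$ with $|N^+(v_0)| > |N^+(v_1)| > \cdots \geq 0$, which must terminate at some $v_k$ with $N^+(v_k) = \varnothing$, giving the required sink.

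The only subtle point, and the part I would flag explicitly, is the convention on self-loops: if a transitively directed digraph were permitted to contain a $2$-cycle $\overrightarrow{uv}, \overrightarrow{vu}$ without inheriting a self-loop, the statement would fail. Under the paper's standing convention that digraphs are simple, transitivity applied to $\overrightarrow{uv}$ and $\overrightarrow{vu}$ forces the self-loop $\overrightarrow{uu}$, so $2$-cycles are automatically excluded and the above argument carries through. An equivalent alternative presentation would be to first deduce from transitivity and the absence of self-loops that $\vec H$ is acyclic (a directed cycle $v_1 \to \cdots \to v_n \to v_1$ yields, by repeated transitivity, the arc $\overrightarrow{v_1 v_n}$ and then the self-loop $\overrightarrow{v_1 v_1}$), and then invoke the well-known fact that a finite acyclic digraph has a sink (take the terminal vertex of a longest directed path).
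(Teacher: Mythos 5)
The paper states this Observation without any proof, evidently treating it as folklore, so there is no author argument to compare against. Your argument is correct. The strictly decreasing chain $N^+(v_0) \supsetneq N^+(v_1) \supsetneq \cdots$ is a valid way to extract a sink from a finite transitive digraph, and each inclusion is justified exactly as you say: transitivity gives $N^+(v_{i+1}) \subseteq N^+(v_i)$, and strictness comes from $v_{i+1} \in N^+(v_i) \setminus N^+(v_{i+1})$ (no self-loops). Your remark about $2$-cycles is the one genuinely delicate point and you handle it correctly: the paper's definition of transitivity places no restriction that $v_i \neq v_k$, so a $2$-cycle $\overrightarrow{uv}, \overrightarrow{vu}$ would force the self-loop $\overrightarrow{uu}$, which the standing simplicity convention excludes. (In the one place the paper actually invokes the Observation, the digraph in question is a transitive \emph{tournament}, so $2$-cycles are ruled out for a second reason, but your clarification is what makes the Observation true as stated.) The alternative route via acyclicity and a longest directed path is equally sound and perhaps more standard; either is a complete proof.
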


\begin{obs}
    \label{obs:induced_tourn}
    If $G \subsetsim \vec H_1 \, \squareslash \, \vec H_{2}$, and $C$ is a clique of $G$, then $P_1(C)$ and $P_2(C)$ induce cliques in $H_1$ and $H_2$ respectively. If further $\vec H_1$ and $\vec H_2$ are oriented, then $P_1(C)$ and $P_2(C)$ induce tournaments in $\vec H_1$ and $\vec H_2$ respectively.
\end{obs}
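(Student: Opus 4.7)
The plan is to unpack the definition of the directed product applied to an arbitrary edge of $C$. First I pick two distinct vertices $x, x' \in C$ and write their embeddings as $x = (u,w)$ and $x' = (u',w')$; since the embedding is injective, $(u,w) \ne (u',w')$, so at least one of $u \ne u'$ or $w \ne w'$ holds. Because $C$ is a clique in $G$, the edge $xx'$ lies in $E(G)$, so the embedding forces an arc between $(u,w)$ and $(u',w')$ in $\overrightarrow{\vec H_1 \, \squareslash \, \vec H_2}$ in one of the two possible orientations.

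Then I perform a three-case analysis according to the definition of $\overrightarrow{\vec H_1 \, \squareslash \, \vec H_2}$. If $u = u'$, only the first case of the definition applies, yielding an arc between $w$ and $w'$ in $\vec H_2$. If $w = w'$, the second case applies, yielding an arc between $u$ and $u'$ in $\vec H_1$. If both $u \ne u'$ and $w \ne w'$, then only the diagonal case applies, yielding arcs in both factors simultaneously. In every case, whenever $u \ne u'$ there is an arc between $u$ and $u'$ in $\vec H_1$, hence $uu' \in E(H_1)$; symmetrically, $w \ne w'$ forces $ww' \in E(H_2)$. Ranging over all pairs $x,x' \in C$ shows that any two distinct vertices of $P_1(C)$ are adjacent in $H_1$, and likewise for $P_2(C)$ in $H_2$, proving the first part.

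For the second part, assume $\vec H_1$ and $\vec H_2$ are oriented. The previous step provides at least one arc between any two distinct vertices of $P_1(C)$ in $\vec H_1$; the orientation hypothesis forbids both arcs between the same pair. Hence between any two distinct vertices of $P_1(C)$ there is exactly one arc in $\vec H_1$, so $P_1(C)$ induces a tournament in $\vec H_1$, and symmetrically for $P_2(C)$ in $\vec H_2$. There is no real obstacle here: the whole content is a mechanical repackaging of the definition of the directed product, and I expect the write-up to be just a few lines.
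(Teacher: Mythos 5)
Your argument is correct, and since the paper states this as an Observation without proof (treating it as immediate from the definition of the directed product), your write-up is exactly the mechanical unpacking the authors had in mind. The three-case split and the injectivity remark are precisely what is needed, and the tournament claim follows correctly from the orientedness hypothesis.
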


\begin{obs}
    \label{obs:induced_transitive}
    Any induced digraph of a transitive digraph is transitive.
\end{obs}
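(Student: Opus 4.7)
The plan is to prove this by directly unfolding the definition of transitivity. I would fix an induced subdigraph $\vec H'$ of a transitive digraph $\vec H$, recalling that an induced subdigraph is one with $V(\vec H') \subseteq V(\vec H)$ and $E(\vec H') = \{\overrightarrow{uv} \in E(\vec H) : u, v \in V(\vec H')\}$. Then to verify transitivity of $\vec H'$, I would take arbitrary arcs $\overrightarrow{v_iv_j}, \overrightarrow{v_jv_k} \in E(\vec H')$, note that both are arcs of $\vec H$ by the induced property, apply transitivity of $\vec H$ to conclude $\overrightarrow{v_iv_k} \in E(\vec H)$, and then use the induced property once more (since $v_i, v_k \in V(\vec H')$) to conclude $\overrightarrow{v_iv_k} \in E(\vec H')$, as required.

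No step should present any technical difficulty; the statement is essentially a one-line consequence of the definitions, which is why it is recorded as an observation rather than a lemma. The only conceptual point worth flagging is that ``induced'' is essential and cannot be weakened to ``subdigraph'': for a general subdigraph, one could keep the directed path $v_i \to v_j \to v_k$ while deleting the shortcut arc $\overrightarrow{v_iv_k}$, even when the latter exists in $\vec H$. I expect this observation is being recorded here for convenient use in later inductive arguments, likely in combination with \cref{obs:induced_tourn}, where the clique projections $P_1(C)$ and $P_2(C)$ need to be treated as transitive tournaments inherited from transitively oriented factors $\vec H_1$ and $\vec H_2$; having both observations in hand will presumably let subsequent proofs immediately pick out a sink in such a projected tournament.
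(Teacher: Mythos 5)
Your proof is correct and is exactly the direct unfolding of definitions that the paper implicitly relies on when it records this as an unproved observation. Your remarks on why ``induced'' cannot be weakened to ``subdigraph'' and on the intended use alongside \cref{obs:induced_tourn} are also accurate.
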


Let $\vec T$ be a transitive tournament. An arc $\overrightarrow{uv} \in E(\vec T)$ is a \defn{big arc} if $v$ is a sink for $\vec T$ and $u$ has outdegree $1$, implying that the only outneighbour of $u$ is $v$. Observe that the big arc of a transitive tournament is unique.

\begin{obs}
    \label{obs:big_edge}
    Let $G \subsetsim \vec H_1 \, \squareslash \, \vec H_2$, and suppose $D$ is a diagonal clique of $G$, with  $V(D) = \{(v_1, w_1), \dots, (v_k, w_k) \}$. Suppose the projections of $D$ induce transitive tournaments $\vec T_1$ and $\vec T_2$ in $\vec H_{1}$ and $\vec H_{2}$ respectively. Then there exists a unique edge $(v_i, w_i) (v_j, w_j) \in E(D)$ such that $\overrightarrow{v_i v_j}$ is the big arc for $\vec T_1$, and $\overrightarrow{w_i w_j}$ is the big arc for $\vec T_2$.
\end{obs}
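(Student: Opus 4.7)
The plan is to show that the diagonal-clique hypothesis forces the map $v_\ell \mapsto w_\ell$ to be an isomorphism of tournaments $\vec T_1 \to \vec T_2$; the conclusion then reduces to the uniqueness of the big arc in each transitive tournament.

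First, since $D$ is a diagonal $k$-clique, $|P_1(D)| = |P_2(D)| = k$, so the labellings $v_1, \ldots, v_k$ and $w_1, \ldots, w_k$ are honest: the $v_\ell$ are pairwise distinct, and likewise for the $w_\ell$. I would next establish the key consistency claim: for every pair of indices $a \ne b$,
\[ \overrightarrow{v_a v_b} \in E(\vec T_1) \iff \overrightarrow{w_a w_b} \in E(\vec T_2). \]
To see this, recall that $D$ is a clique, so $(v_a, w_a)(v_b, w_b)$ is an edge of $G \subsetsim \vec H_1 \, \squareslash \, \vec H_2$, and by assumption this edge is diagonal. Thus at least one of the arcs $\overrightarrow{(v_a,w_a)(v_b,w_b)}$, $\overrightarrow{(v_b,w_b)(v_a,w_a)}$ is a diagonal arc of $\overrightarrow{\vec H_1 \, \squareslash \, \vec H_2}$, which by definition requires the $\vec H_1$- and $\vec H_2$-coordinates to be oriented consistently. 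Since $\vec T_1$ and $\vec T_2$ are tournaments, each contains exactly one of $\overrightarrow{v_a v_b}, \overrightarrow{v_b v_a}$ and exactly one of $\overrightarrow{w_a w_b}, \overrightarrow{w_b w_a}$; consistency of the diagonal arc then forces these to correspond as claimed.

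With the consistency claim in hand, the bijection $v_\ell \mapsto w_\ell$ is an isomorphism between $\vec T_1$ and $\vec T_2$ as directed graphs. Both are transitive tournaments on $k$ vertices, and each such tournament has a unique vertex of outdegree $0$ (its sink) and a unique vertex of outdegree $1$, so each has a unique big arc. Because isomorphisms preserve in/out-degrees, the isomorphism $v_\ell \mapsto w_\ell$ carries the big arc of $\vec T_1$ to the big arc of $\vec T_2$. Writing the big arc of $\vec T_1$ as $\overrightarrow{v_i v_j}$, the image is $\overrightarrow{w_i w_j}$, which is therefore the big arc of $\vec T_2$; the edge $(v_i, w_i)(v_j, w_j) \in E(D)$ is the one sought.

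Uniqueness is immediate: any edge $(v_i, w_i)(v_j, w_j)$ meeting both conditions in particular gives $\overrightarrow{v_i v_j}$ as the big arc of $\vec T_1$, which pins down the unordered pair $\{i, j\}$ and (by the sink/outdegree-$1$ roles) pins down which index is $i$ and which is $j$. I do not anticipate a real obstacle here: the whole content is the observation that diagonality of $D$ is exactly the statement that the two induced tournaments are canonically isomorphic, after which everything is a triviality about transitive tournaments.
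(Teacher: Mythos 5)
Your proof is correct, and since the paper leaves \cref{obs:big_edge} unproved (it is stated as an Observation, treated as immediate), there is no official argument to compare against; your formalization via the tournament isomorphism is the natural way to make it precise. The key point you correctly identify is that because $D$ is diagonal, every edge of $D$ is diagonal, so it must arise from a diagonal arc of $\overrightarrow{\vec H_1 \, \squarediv \, \vec H_2}$, forcing the coordinate orientations to agree; this makes $v_\ell \mapsto w_\ell$ a tournament isomorphism $\vec T_1 \to \vec T_2$, and the conclusion then reduces to the uniqueness of the big arc in a transitive tournament, which isomorphisms preserve.
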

We call the edge $(v_i, w_i) (v_j, w_j) \in E(D)$ in \cref{obs:big_edge} the \defn{big diagonal edge of $D$}, and $(v_i, w_j)$ and $(v_j, w_i)$ are the \defn{big siblings of $D$}. Note that the big arc is an arc in a directed graph, while the big diagonal edge is an edge in an undirected graph.

Let $G \subsetsim \vec H_1 \, \squareslash \, \vec H_2$. Suppose further that $\tw(G) = k$, and the projections of each diagonal $k$-clique in $G$ induce transitive tournaments in $\vec H_1$ and $\vec H_2$. Define the \defn{$G$-big siblings} to be:
\begin{equation*}
    \{(v_i, w_j), (v_j, w_i) : (v_i, w_i) (v_j, w_j) \text{ is the big diagonal edge of a diagonal $k$-clique in $G$}\}.
\end{equation*}

\restatedpubstw*

\begin{proof}
    By \cref{lem:edge_max_stw_k}, it suffices to prove the theorem for simple $k$-trees, which are constructed recursively from $K_{k+1}$ by repeatedly adding a new vertex so that its neighbourhood is an existing $k$-clique, as long as the same $k$-clique is not used more than once. Let $G_t$ be the graph at the $t$-th step of this process. So $G_1=K_{k+1}$.

    We recursively construct oriented digraphs $\vec H_{1,t}$ and $\vec H_{2,t}$ such that the following invariants are maintained:
    \begin{enumerate}[label=\textbf{(I\arabic*):},leftmargin=4em]
        \item $G_t \subsetsim \vec H_{1,t} \, \squareslash \, \vec H_{2,t}$.
        \item For each $i \in \{ 1, 2 \}$, $\indeg(\vec H_{i, t}) \leq k-1$.
        \item For each $i \in \{ 1, 2 \}$, $\tw(H_{i,t}) \leq k -1$.
        \item For each $k$-clique $C$ of $G_t$, the digraphs induced by $P_1(C)$ and $P_2(C)$ in $\vec H_{1,t}$ and $\vec H_{2,t}$ respectively are transitive tournaments.
        \item For each diagonal $k$-clique $D$ in $G_t$, each used big sibling of $D$ is adjacent to all vertices of $D$ in $G_t$. (The big siblings of $D$ are well-defined because of (I4)).
        \item No two diagonal $k$-cliques in $G_t$ share a common unused big sibling.
    \end{enumerate}
    When $t=1$, $G_1=K_{k+1}$. We define the factor graph $\vec H_{1,1}$ by $V(\vec H_{1,1}) = \{1, \dots, k\}$ and $E(\vec H_{1,1}) = \{\overrightarrow{ij}: 1 \leq i < j \leq k\}$. Define $\vec H_{2,1}$ by $V(\vec H_{2,1}) = \{1,2\}$ and $E(\vec H_{2,1}) = \{\overrightarrow{12}\}$. Then (I2) and (I3) hold. Since $\vec H_{1,1}$ and $\vec H_{2,1}$ are transitive tournaments, (I4) is satisfied. (I1) holds because we may embed $V(G_1)$ in the $k+1$ coordinates $\{(1,1), \dots, (k, 1), (k,2)\}$. Lastly, (I5) and (I6) hold because there are no diagonal cliques in $G_1$.

    Suppose we have constructed $G_t \subsetsim \vec H_{1, t} \, \squareslash \, \vec H_{2,t}$ satisfying (I1)--(I6). Let $g_{t+1}$ be the new vertex in $V(G_{t+1}) \setminus V(G_t)$ and suppose its neighbourhood is $C$, where $C$ is a $k$-clique in $G_t$. Let $(v_1, w_1), (v_2, w_2), \dots, (v_k, w_k)$ be the vertices of $C$. It is not necessary that the vertices $\{v_1, \dots, v_k\}$ are distinct, and likewise for $\{w_1, \dots, w_k\}$.

    \textbf{Case 1:} Suppose that $C$ is not diagonal. Then not all edges of $E(C)$ are diagonal, so there are indices $1 \leq i < j \leq k$ such that $v_i = v_j$ or $w_i = w_j$. Without loss of generality, suppose $v_i = v_j$. By (I4), the digraph induced by $\{w_1, \dots, w_k\}$ in $\vec H_{2, t}$ is a transitive tournament, and so has a sink $w_q$ for some $q \in \{1, \dots, k\}$.  Define $\vec H_{1, t+1}$ to be the directed graph obtained from $\vec H_{1, t}$ by adding a new vertex $x$ and arcs $\overrightarrow{v_1 x}, \dots, \overrightarrow{v_k x}$ directed towards $x$.
    Let $\vec H_{2, t+1}:= \vec H_{2, t}$.
    We embed $g_{t+1}$ at $(x, w_q)$ in $\vec H_{1, t+1} \, \squareslash \, \vec H_{2, t+1}$.
    We check that (I1)--(I6) continue to hold for $G_{t+1}$.

    \begin{enumerate}[label=\textbf{(I\arabic*):},leftmargin=3em]
        \item  It suffices to check that each edge in $E(G_{t+1}) \setminus E(G_t)$ is embedded in the directed product.
              That is, for each $i \in \{1, \dots, k\}$, $(v_i, w_i) (x, w_q)$ is an edge of $\vec H_{1, t+1} \, \squareslash \, \vec H_{2, t+1}$. If $w_i = w_q$, then $(v_i, w_i) (x, w_q)$ is an edge in the directed product since $v_i x \in E(H_{1, t+1} ) $. If $w_i \neq w_q$, then because $x$ and $w_q$ are sinks for $P_1(C+g_{t+1})$ and $P_2(C+g_{t+1})$ respectively, $\overrightarrow{v_i x} \in E(\vec H_{1, t+1})$ and $\overrightarrow{w_i w_q} \in E(\vec H_{2, t+1})$. Hence, $(v_i, w_i)(x, w_q)$ is an edge in the directed product.
        \item Since $v_i = v_j$, the set $\{v_1, \dots, v_k\}$ contains at most $k-1$ distinct vertices. Therefore, $x$ has indegree at most $k-1$. Further, the indegree of each vertex apart from $x$ in $\vec H_{1, t+1}$ remains unchanged.
        \item The graph $H_{1,t+1}$ is obtained from $H_{1,t}$ by adding a new vertex adjacent to each vertex in the clique $P_1(C)$, which has size at most $k-1$, therefore $\tw(H_{1, t+1}) \leq k-1$.
        \item The digraphs induced by $P_1(C+g_{t+1})$ and $P_2(C+g_{t+1})$ in $\vec H_{1, t+1}$ and $\vec H_{2, t+1}$ are transitive tournaments respectively.
              \cref{obs:induced_transitive} implies each new $k$-clique induces transitive tournaments on the factors.
    \end{enumerate}
    Suppose all diagonal $k$-cliques in $G_{t+1}$ are in $G_t$. To show (I5), we have to check that for each diagonal $k$-clique $D$ in $G_{t+1}$, no unused big sibling of $D$ in $G_t$ is used in $G_{t+1}$. The only new used vertex is $(x, w_q)$, and $(x, w_q)$ is not a big sibling of any existing diagonal $k$-clique because $x$ is a new vertex. Finally, (I6) continues to hold because each unused $G_{t+1}$-big sibling is in $G_{t}$.

    Therefore, we may assume that there is a diagonal $k$-clique $D$ in $G_{t+1}$ not contained in $G_t$. We claim:
    \begin{claim}
        \label{claim:diagonal}
        $(x, w_q) \in D$, $C$ is $(k-1, k)$-projected, and $w_i = w_q$ or $w_j = w_q$. If $w_i = w_q$, then $C \setminus D = \{ (v_i, w_i) \}$. If $w_j = w_q$, then $C \setminus D = \{ (v_j, w_j) \}$.
    \end{claim}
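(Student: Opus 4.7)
The plan is to leverage the fact that $g_{t+1}$ is the only new vertex and that $N_{G_{t+1}}(g_{t+1}) = C$, so every new $k$-clique $D$ of $G_{t+1}$ contains $g_{t+1} = (x, w_q)$ and, since $|D| = k = |C|$, has the form $D = (C \setminus \{(v_m, w_m)\}) \cup \{(x, w_q)\}$ for a unique $m \in \{1, \ldots, k\}$. The claim then reduces to determining which $m$ make $D$ diagonal, which I would do by counting $P_1(D)$ and $P_2(D)$ separately.

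First I would examine the $\vec H_2$-projection: $P_2(D) = \{w_\ell : \ell \neq m\} \cup \{w_q\}$. Since $w_q \in \{w_1, \ldots, w_k\}$ this set has size at most $k$, and equality requires both $q = m$ (otherwise $w_q$ already lies in the left-hand set, capping its size at $k-1$) and the $w_\ell$ pairwise distinct. This alone forces $m = q$, $|P_2(C)| = k$, and $C \setminus D = \{(v_q, w_q)\}$.

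Next I would examine the $\vec H_1$-projection: $P_1(D) = \{v_\ell : \ell \neq q\} \cup \{x\}$, and since $x$ is a fresh vertex not in $\{v_1,\ldots,v_k\}$, the size is $|\{v_\ell : \ell \neq q\}| + 1$, so we need $|\{v_\ell : \ell \neq q\}| = k-1$. The hypothesis $v_i = v_j$ with $i \neq j$ already gives $|P_1(C)| \leq k-1$, and removing index $q$ eliminates this collision only when $q \in \{i, j\}$, which forces $w_q \in \{w_i, w_j\}$. Moreover, $v_q$ must coincide with at least one other $v_\ell$: otherwise deletion would drop the projection by one, giving $|\{v_\ell : \ell \neq q\}| = |P_1(C)| - 1 \leq k-2$, a contradiction. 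Hence $|P_1(C)| = k-1$, i.e.\ $C$ is $(k-1, k)$-projected. Combined with $C \setminus D = \{(v_q, w_q)\}$, this yields $C \setminus D = \{(v_i, w_i)\}$ when $w_q = w_i$, and $C \setminus D = \{(v_j, w_j)\}$ when $w_q = w_j$.

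The whole proof is essentially projection-counting bookkeeping, so I do not anticipate a serious obstacle. The only mildly subtle point is recognising that the single equation $|P_1(D)| = k$ must simultaneously resolve the existing $v_i = v_j$ collision (pinning down $q \in \{i,j\}$) and preserve the overall projection size (pinning down $|P_1(C)| = k-1$); keeping these two effects straight is the main piece of care required.
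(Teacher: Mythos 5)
Your proposal is correct and takes essentially the same projection-counting approach as the paper: both arguments show $(x,w_q)\in D$ and $|C\setminus D|=1$, then use the fact that $D$ is diagonal (so $|P_1(D)|=|P_2(D)|=k$) to force the sizes of $P_1(C)$ and $P_2(C)$ and to pin down the deleted vertex. The paper reaches the same conclusions slightly more compactly --- it notes that $C+g_{t+1}$ is $(p+1,q)$-projected and that $D\subseteq C+g_{t+1}$ forces $q=k$ and $p+1=k$, and identifies the deleted vertex directly as $(v_q,w_q)$ since $D$ is diagonal and contains $(x,w_q)$ --- but the bookkeeping you carry out with the two projections is an equivalent and equally valid route.
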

    \begin{proof}
        Observe that because $D$ is not contained in $G_t$, $(x, w_q) \in D$. Observe also that $|C \setminus D| = 1$. Suppose $C$ is $(p,q)$-projected. Then $C + g_{t+1}$ is $(p+1, q)$-projected. Since $D$ is a subclique of $C + g_{t+1}$, we have $q = k$ and $p + 1  = k$. This proves that $C$ is $(k-1, k)$-projected. Since $D$ is diagonal and $v_i = v_j$, $C \setminus D$ contains one of $(v_i, w_i)$ or $(v_j, w_j)$. As $(x, w_q) \in D$, and $D$ is diagonal, we have $(v_q, w_q) \in C \setminus D$. As $|C \setminus D| = 1$, it follows that $(v_i, w_i) = (v_q, w_q)$ or $(v_j, w_j) = (v_q, w_q)$, so $w_i= w_q$ or $w_j = w_q$.
    \end{proof}
    Without loss of generality, by \cref{claim:diagonal}, assume $w_j = w_q$. Then the vertices of $D$ are $ \{(v_1, w_1), \dots, (v_{j-1}, w_{j-1}), (v_{j+1}, w_{j+1}), \dots, (v_k, w_k), (x, w_q) \}$, and $D$ is the only diagonal $k$-clique in $G_{t+1}$ not in $G_t$.
    \begin{claim}\label{claim:big_diagonal_edge}
        $(v_i, w_i)(x, w_q)$ is the big diagonal edge of $D$.
    \end{claim}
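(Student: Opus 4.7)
The plan is to locate the tail of the big arc in each of the induced tournaments $\vec T_1$ on $P_1(D)$ and $\vec T_2$ on $P_2(D)$, and verify that both tails come from the single vertex $(v_i, w_i) \in D$. By the Case~1 construction, every arc $\overrightarrow{v_l x}$ points into $x$, so $x$ is the sink of $\vec T_1$, and by the choice of $w_q$, $w_q$ is the sink of $\vec T_2$. Hence the head of the big diagonal edge must be $(x, w_q)$, and it remains to show that $v_i$ is the sink of $\vec T_1$ restricted to $P_1(D) \setminus \{x\} = P_1(C)$, and that $w_i$ is the sink of $\vec T_2$ restricted to $P_2(D) \setminus \{w_q\} = \{w_l : l \ne j\}$.

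By \cref{claim:diagonal}, $C$ is $(k-1,k)$-projected, so the $w_l$'s are all distinct and the only $v$-collision is $v_i = v_j$. Consequently, for every $l \notin \{i, j\}$, both edges $(v_l, w_l)(v_i, w_i)$ and $(v_l, w_l)(v_j, w_j)$ of $C$ are diagonal in $\vec H_{1,t} \, \squareslash \, \vec H_{2,t}$. Because $v_i = v_j$, the $\vec H_{1,t}$-arc between $v_l$ and $v_i$ is literally the same arc as the $\vec H_{1,t}$-arc between $v_l$ and $v_j$. The crucial step is to exploit this identification to transport the sink condition on $w_q = w_j$ across to $w_i$.

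Concretely, fix any $l \notin \{i, j\}$. Since $w_q = w_j$ is the sink of $\vec T_2$, $\overrightarrow{w_l w_j} \in E(\vec H_{2,t})$, and diagonality of $(v_l, w_l)(v_j, w_j)$ then forces $\overrightarrow{v_l v_j} \in E(\vec H_{1,t})$. As $v_j = v_i$, this is the arc $\overrightarrow{v_l v_i}$, and diagonality of $(v_l, w_l)(v_i, w_i)$ in turn yields $\overrightarrow{w_l w_i} \in E(\vec H_{2,t})$. Running this over all $l \notin \{i, j\}$, every element of $P_1(C) \setminus \{v_i\}$ points to $v_i$, so $v_i$ is the sink of $\vec T_1$ on $P_1(C)$; likewise, every $w_l$ with $l \notin \{i, j\}$ points to $w_i$, so $w_i$ is the sink of $\vec T_2$ on $\{w_l : l \ne j\}$. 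Therefore $\overrightarrow{v_i x}$ is the big arc of $\vec T_1$ and $\overrightarrow{w_i w_q}$ is the big arc of $\vec T_2$, and \cref{obs:big_edge} identifies $(v_i, w_i)(x, w_q)$ as the big diagonal edge of $D$.

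The only conceptually tricky point is the transfer of the sink condition through the collision $v_i = v_j$: the sink property of $w_q$ directly constrains only arcs incident to $w_j$, but the shared $\vec H_{1,t}$-arc (via $v_i = v_j$) together with diagonality of the two edges $(v_l, w_l)(v_i, w_i)$ and $(v_l, w_l)(v_j, w_j)$ propagates the constraint to the arcs incident to $w_i$. Once that observation is in place, no further input from the induction hypothesis is required.
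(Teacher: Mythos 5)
Your proof is correct and follows essentially the same strategy as the paper's: identify $x$ and $w_q$ as the sinks of $\vec T_1$ and $\vec T_2$, then use the arc $\overrightarrow{w_z w_j}$ (forced by $w_q = w_j$ being a sink) to obtain $\overrightarrow{v_z v_j} = \overrightarrow{v_z v_i}$ via diagonality, and in turn $\overrightarrow{w_z w_i}$, so that $v_i$ and $w_i$ each have outdegree $1$. The only cosmetic difference is that the paper explicitly cites (I4) for $G_{t+1}$ to justify that $\vec T_1$ and $\vec T_2$ are transitive tournaments (needed for \cref{obs:big_edge}), whereas you invoke \cref{obs:big_edge} without restating that hypothesis, which was already verified earlier in the case analysis.
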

    \begin{proof}
        By (I4) for $G_{t+1}$, the projections of $D$ induce transitive tournaments $\vec T_1$ and $\vec T_2$ in $\vec H_{1,t+1}$ and $\vec H_{2,t+1}$ respectively.
        By construction, $x$ is a sink for $\vec T_1$ and $w_q$ is a sink for $\vec T_2$. Hence, it suffices to show that $v_i$ and $w_i$ have outdegree $1$ in $\vec T_1$ and $\vec T_2$ respectively. Consider a vertex $(v_{z}, w_z) \in D \setminus \{(v_i, w_i), (x, w_q)\}$. Since $D$ is diagonal, $v_z \notin \{v_i, x\}$ and $w_z \notin \{w_i, w_q\}$.

        As $w_j = w_q$, and $w_q$ is a sink for $\vec T_2$, there is an arc from $w_z$ to $w_j$ in $E(\vec T_2)$. However, since $(v_z, w_z)(v_j, w_j) \in E(C)$, the definition of the directed product implies that there is an arc from $v_z$ to $v_j$.
        But $v_j = v_i$. Thus, for each $v_z \in P_1(D) \setminus \{ v_i, x \}$, there is an arc from $v_z$ to $v_i$. Therefore, $v_i$ has outdegree $1$ in $\vec T_1$, and the only outneighbour of $v_i$ is $x$. As $(v_z, w_z)(v_i, w_i) \in E(C)$ and $\overrightarrow{v_z v_i} \in E(\vec T_1)$, the definition of the directed product implies that $\overrightarrow{w_z w_i} \in E(\vec T_2)$. Therefore, $w_i$ has outdegree $1$, and the only outneighbour of $w_i$ is $w_q$, as desired.
    \end{proof}
    \begin{enumerate}[label=\textbf{(I\arabic*):},leftmargin=3em,resume]
        \item By \cref{claim:big_diagonal_edge}, $(v_i, w_q)$ is a used big sibling of $D$, and $(x, w_i)$ is an unused big sibling of $D$. We check that for each diagonal $k$-clique in $G_{t+1}$ that is not $D$, no unused $G_t$-big sibling is used in $G_{t+1}$. The only new used vertex is $(x, w_q)$, which is not a big sibling of an existing diagonal $k$-clique because $x$ is a new vertex. For $D$, $(v_i, w_q) = (v_j, w_j)$ is adjacent in $G_{t+1}$ to all vertices of $D$ by construction.
        \item By \cref{claim:big_diagonal_edge}, the only unused $G_{t+1}$-big sibling not in $G_t$ is $(x, w_i)$, which is not a big sibling of an existing diagonal $k$-clique in $G_t$ since $x$ is a new vertex.
    \end{enumerate}

    \textbf{Case 2:} Suppose that at step $t+1$, the $k$-clique $C$ is diagonal. Since (I4) and (I5) hold for $G_t$, there is a unique big diagonal edge $(v_i, w_i)(v_j, w_j) \in E(C)$ satisfying (I5) and (I6). We now claim that $(v_i, w_j)$ or $(v_j, w_i)$ is not used in $G_t$. Suppose both $(v_i, w_j)$ and $(v_j, w_i)$ are used in $G_t$. By (I5), $(v_i, w_j)$ and $(v_j, w_i)$ are both adjacent in $G_t$ to each vertex of $C$. Since $g_{t+1}$ is adjacent to each vertex in $C$, $K_{\overline{k},3}$ is a subgraph of $G_{t+1}$, contradicting the fact that $\stw(G_{t+1}) \leq k$.

    Without loss of generality, assume $(v_i, w_j)$ is not used in $G_t$.  For each $i \in \{ 1, 2 \}$, define $\vec H_{i, t+1} := \vec H_{i, t}$. Embed $g_{t+1}$ at $(v_i, w_j)$. Then (I2) and (I3) hold. (I4) is true because for each $i \in \{1, 2\}$, $P_i(C+g_{t+1}) = P_i(C)$, and property (I4) for $G_t$ implies that for each $i \in \{1, 2\}$, $P_i(C)$ induces a transitive tournament on $\vec H_{i, t}$. The addition of $(v_i, w_j)$ creates $k$ new cliques on $k$ vertices; however, none of them are diagonal. We verify (I1). We have to check that for each $z \in \{1, \dots, k\}$, $(v_i, w_j)$ is adjacent to $(v_z, w_z)$ in $\vec H_{1, t+1} \, \squareslash \, \vec H_{2, t+1}$.  Let $\vec T_1$ and $\vec T_2$ be the projections of $C$ on $\vec H_{1, t+1}$ and $\vec H_{2, t+1}$ respectively. Since $v_iv_j \in E(T_1)$ and $w_iw_j \in E(T_2)$, $(v_i, w_j)$ is adjacent to $(v_i, w_i)$ and $(v_j, w_j)$. Since $(v_i, w_i)(v_j, w_j)$ is the big diagonal edge for $C$, it follows that for each $z \in \{1, \dots, k\} \setminus \{i, j\}$, $\overrightarrow{v_zv_i} \in E(\vec T_1)$ and $\overrightarrow{w_z w_j} \in E(\vec T_2)$, so $(v_z, w_z)(v_i, w_j)$ is an edge in the directed product.

    To check (I5), note that all diagonal $k$-cliques in $G_{t+1}$ are already in $G_t$, so it suffices to show that for each diagonal $k$-clique other than $C$, an unused $G_t$-big sibling remains unused in $G_{t+1}$. The only used $G_{t+1}$-big sibling not in $G_t$ is $(v_i, w_j)$, and it is an unused big sibling of $C$ in $G_t$. By (I6) for $G_t$, no other diagonal $k$-clique in $G_t$ has $(v_i, w_j)$ as a big sibling. This proves (I5) for $G_{t+1}$. Lastly, (I6) holds for $G_{t+1}$ because all diagonal $k$-cliques in $G_{t+1}$ are in $G_t$, and each unused $G_{t+1}$-big sibling is in $G_t$.
\end{proof}

\section{Lower Bounds on Treewidth}
\label{s:optimal_bounds}
This section is dedicated to proving \cref{thm:dp_lb}. We in fact prove \cref{thm:main_thm_opt}, which is a strengthening of \cref{thm:dp_lb}. If $C$ is a clique of $G$ and $v$ is a vertex such that $C \subseteq N_G(v)$, then we say that the vertex $v$ is \defn{attached} to $C$. Implicit in the definition is that if $v$ is attached to $C$, then $v \notin C$. For a clique $C$, let \defn{$\att_G(C)$} denote the number of vertices attached to $C$ in $G$.

Consider a graph $G$ with a fixed embedding into $\vec H_1 \, \squareslash \, \vec H_2$. Retain all previous notation from the start of \cref{s:preliminaries}. See \cref{fig:diagonal} for a visual description of the following definitions. For a clique $C$, define the \defn{$P_1(C)$-strip} (resp.\ \defn{$P_2(C)$-strip}) as $P_1(C) \times V(H_2)$ (resp.\ $V(H_1) \times P_2(C)$).
\begin{itemize}
    \item A vertex $v \in V(G) \setminus C$ is \defn{$C$-diagonal} if it is attached to $C$ and it is not in the $P_1(C)$-strip or the $P_2(C)$-strip.
    \item A vertex $v \in C$ is \defn{$(C,P_1)$-redundant} (resp. \defn{$(C,P_2)$-redundant}) if there is a vertex $w \in C$ with $w \neq v$ such that $P_1(w) = P_1(v)$ (resp. $P_2(w) = P_2(v)$). A vertex $v \in C$ is \defn{$C$-redundant} if it is both $(C,P_1)$-redundant and $(C,P_2)$-redundant.
    \item A vertex $v \in C$ is \defn{$(C,P_1)$-attractive} (resp. \defn{$(C,P_2)$-attractive}) if every arc between $P_1(C) - P_1(v)$ (resp. $P_2(C) - P_2(v)$) and $P_1(v)$ (resp. $P_2(v)$) is directed towards $P_1(v)$ (resp. $P_2(v)$).
    \item A vertex $v \in V(G) \setminus C$ is \defn{$(C,P_1)$-magnetic} (resp. \defn{$(C,P_2)$-magnetic}) if $v \notin P_1(C) \times P_2(C)$, $v$ lies in the $P_2(C)$-strip (resp. $P_1(C)$-strip), $v$ is attached to $C$, and $v$ is $(C + v, P_1)$-attractive (resp. $(C+v, P_2)$-attractive).
\end{itemize}
Lastly, let \defn{$\mathcal{C}(G,k)$} denote the set of all $k$-cliques of a graph $G$.
\begin{obs}
    \label{obs:redundant}
    For a clique $C$, a vertex $v$ attached to $C$ and in the $P_1(C)$-strip is $(C+v, P_1)$-redundant.
\end{obs}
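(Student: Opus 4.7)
The plan is simply to unpack the two relevant definitions and exhibit an explicit witness. By hypothesis, $v$ lies in the $P_1(C)$-strip $P_1(C)\times V(H_2)$, so $P_1(v)\in P_1(C)$. By definition of $P_1(C)=\{P_1(x):x\in C\}$, there therefore exists some $w\in C$ with $P_1(w)=P_1(v)$. The claim is that $w$ is the required companion witnessing that $v$ is $(C+v,P_1)$-redundant.

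Two small checks complete the verification. First, $C+v$ really is a clique: since $v$ is attached to $C$ one has $C\subseteq N_G(v)$ (and $v\notin C$ by the convention stated just after the definition of $\att_G$), so $v$ is adjacent to every vertex of $C$, and $C$ is already a clique. Second, $w\neq v$: we have $w\in C$ while $v\notin C$, so they cannot coincide. Both $v$ and $w$ thus lie in the clique $C+v$, they are distinct, and they share the same $H_1$-projection, which is exactly the condition for $v$ to be $(C+v,P_1)$-redundant.

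There is no substantive obstacle; the statement is a direct consequence of the definitions, recorded here only because it will be invoked implicitly in later arguments about when a vertex attached to a clique forces a collision in one of the two projections.
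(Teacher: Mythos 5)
Your proof is correct, and since the paper states this as an Observation without any proof, your definition-unpacking is exactly the intended justification: $P_1(v)\in P_1(C)$ yields a witness $w\in C$ with the same $H_1$-projection, and $w\neq v$ because attachment forces $v\notin C$.
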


\begin{obs}
    \label{obs:diagonal}
    Let $G \subsetsim \vec H_1 \, \squareslash \, \vec H_2$, and let $C$ be a $(p,q)$-projected clique of $G$. The following are equivalent:
    \begin{itemize}
        \item $C$ is diagonal,
        \item $|p| = |q| = |C|$,
        \item $C - v$ is $(p-1, q-1)$-projected for each vertex $v \in C$.
    \end{itemize}
\end{obs}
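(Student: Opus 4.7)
The observation is essentially a bookkeeping statement, and the plan is to verify it by a short cycle of implications, with each step being a direct unpacking of definitions.

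First I would note that (1) $\iff$ (2) is immediate from the definition of a diagonal clique: writing $k := |C|$, the clique $C$ is diagonal precisely when it is $(k,k)$-projected, i.e.\ when $p = q = k = |C|$. So the real content lies in the equivalence of (2) and (3).

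For (2) $\Rightarrow$ (3), I would use that $P_1 : C \to P_1(C)$ is surjective by definition of $P_1(C)$; under the hypothesis $|C| = p$ it is therefore a bijection. Removing a single vertex $v \in C$ removes exactly one preimage, so $|P_1(C - v)| = p - 1$. The symmetric argument gives $|P_2(C - v)| = q - 1$, so $C - v$ is $(p-1, q-1)$-projected.

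For (3) $\Rightarrow$ (2), I would fix $v \in C$ and observe that $|P_1(C - v)| = p - 1 = |P_1(C)| - 1$ forces $P_1(v) \notin P_1(C - v)$, i.e.\ $v$ is the unique vertex of $C$ with $H_1$-projection $P_1(v)$. Since this holds for every $v \in C$, the map $P_1$ is injective on $C$, giving $|C| = |P_1(C)| = p$; symmetrically $|C| = q$, establishing (2). There is no real obstacle here — the only thing to be careful about is to use both $P_1$ and $P_2$ versions of each inequality, which follow by symmetric arguments.
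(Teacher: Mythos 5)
The paper states this as an \texttt{obs} and gives no proof, so there is nothing to compare against literally; but your argument is exactly the short unwinding of definitions that justifies the observation. All three steps are correct: $(1)\Leftrightarrow(2)$ is a restatement of what ``diagonal'' means for a $k$-clique (the absolute-value signs in the statement are evidently vestigial, since $p,q$ are cardinalities), and your two directions between $(2)$ and $(3)$ correctly use that $P_1\colon C\to P_1(C)$ is surjective, so $|C|=p$ is equivalent to injectivity on $C$, which in turn is equivalent to $|P_1(C-v)|=p-1$ holding for every $v\in C$ (and symmetrically for $P_2$). No gaps.
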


\begin{lem}
    \label{lem:magnetic_attachments}
    Let $G \subsetsim \vec H_1 \, \squareslash \, \vec H_2$, and suppose $C$ is a clique of $G$ containing a $(C, P_1)$-attractive vertex $v$. If $w \in V(G) \setminus C$ is $(C, P_2)$-magnetic, then $P_1(v) = P_1(w)$.
\end{lem}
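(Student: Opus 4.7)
The statement should be provable by a short contradiction argument that plays the two attractiveness properties off against each other via the single edge $vw$.

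I would start by assuming, for contradiction, that $P_1(v)\neq P_1(w)$, and then unpack what being $(C,P_2)$-magnetic buys us for $w$. From $P_1(w)\in P_1(C)$ together with $w\notin P_1(C)\times P_2(C)$, we get $P_2(w)\notin P_2(C)$. In particular $P_2(v)\in P_2(C)$ differs from $P_2(w)$, so under our assumption both projections of $v$ and $w$ disagree. Because $w$ is attached to $C$, the edge $vw$ lies in $G$, and (since neither coordinate matches) the corresponding arc in $\overrightarrow{\vec H_1\,\squareslash\,\vec H_2}$ must be a diagonal arc, in one of the two orientations.

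The core of the argument is a case analysis on that orientation. If the arc points from $\alpha(w)$ to $\alpha(v)$, then by definition of the directed product $\overrightarrow{P_2(w)P_2(v)}\in E(\vec H_2)$; but since $w$ is $(C+w,P_2)$-attractive and $P_2(C+w)-P_2(w)=P_2(C)\ni P_2(v)$, every arc of $\vec H_2$ between $P_2(C)$ and $P_2(w)$ must be oriented towards $P_2(w)$, ruling this out. So the arc points from $\alpha(v)$ to $\alpha(w)$, forcing $\overrightarrow{P_1(v)P_1(w)}\in E(\vec H_1)$. But then, since $v$ is $(C,P_1)$-attractive and $P_1(w)\in P_1(C)-P_1(v)$ (using our contradiction hypothesis), every arc of $\vec H_1$ between $P_1(C)-P_1(v)$ and $P_1(v)$ must be directed towards $P_1(v)$, contradicting $\overrightarrow{P_1(v)P_1(w)}\in E(\vec H_1)$. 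Hence $P_1(v)=P_1(w)$.

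I do not expect any real obstacle here: each step is just careful bookkeeping against the definitions. The only thing one must be slightly careful about is establishing $P_2(v)\neq P_2(w)$ before invoking the ``diagonal arc'' observation, and recalling that ``every arc between $A$ and $B$ is directed towards $B$'' in the definition of attractive means the forbidden arcs are precisely those of the form $\overrightarrow{ba}$ with $b\in B$, $a\in A$, which is what rules out each of the two potential orientations in turn.
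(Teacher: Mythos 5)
Your proposal is correct and follows essentially the same argument as the paper: the paper also assumes $P_1(v)\neq P_1(w)$, uses the magnetic property of $w$ to force the arc corresponding to $vw$ to point from $\alpha(v)$ to $\alpha(w)$, and then derives the contradiction with $(C,P_1)$-attractiveness from the resulting arc $\overrightarrow{P_1(v)P_1(w)}$ in $\vec H_1$. You have merely made explicit the intermediate steps (that $P_2(v)\neq P_2(w)$ and hence the $vw$ arc is diagonal) that the paper leaves to the reader.
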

\begin{proof}
    Suppose not. Since $w$ is $(C, P_2)$-magnetic, every arc between $P_2(C)$ and $P_2(w)$ is directed towards $P_2(w)$. Since $w$ is attached to $C$, $vw \in E(G)$, and there is an arc directed from $P_1(v)$ towards $P_1(w)$. But $P_1(w) \in P_1(C) - P_1(v)$. This contradicts the assumption that $v$ is $(C, P_1)$-attractive.
\end{proof}

\begin{figure}[!b]
    \centering
    \includegraphics{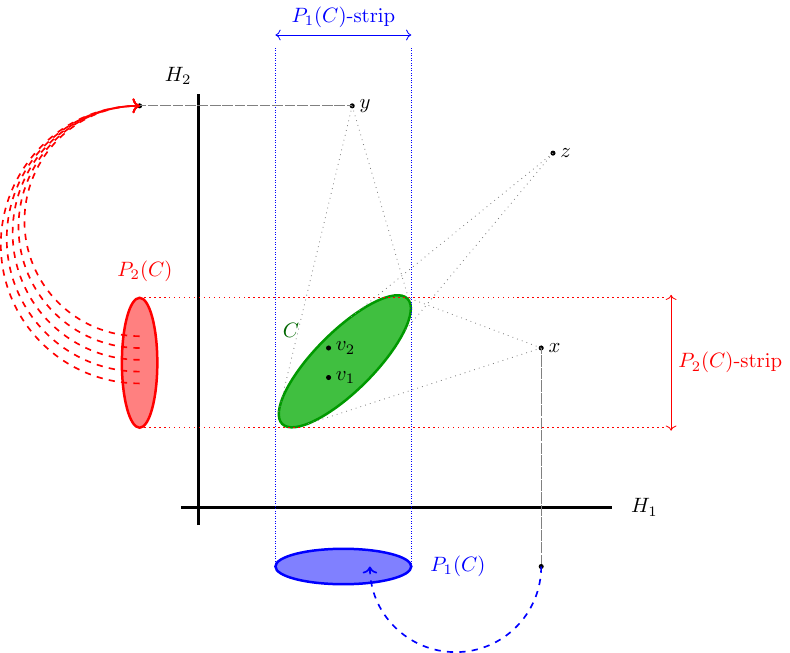}
    \caption{The blue and red dotted lines outline the $P_1(C)$ and $P_2(C)$-strip respectively.
        The vertices $v_1$ and $v_2$ are $(C, P_1)$-redundant. The blue and red ellipses represent $P_1(C)$ and $P_2(C)$ respectively. The vertex $x$ is not $(C, P_1)$-magnetic because there is at least one arc directed from its $H_1$-projection to $P_1(C)$. The vertex $y$ is $(C, P_2)$-magnetic because all arcs from $P_2(C)$ are directed towards its $H_2$-projection.
        The vertex $z$ is $C$-diagonal because it lies neither in the $P_1(C)$-strip nor the $P_2(C)$-strip.
        The vertices $x, y$ and $z$ are attached to $C$ (shown by the grey dotted lines).}
    \label{fig:diagonal}
\end{figure}

The following lemma allows us to make use of the bounds on $\indeg(\vec H_1)$ and $\indeg(\vec H_2)$.

\begin{lem}
    \label{lem:bad_vertices}
    Let $s, t, k \geq 1$. Let $G$ be a graph, and let $C$ be a $k$-clique of $G$. Let $f(s,t,k):= 2k^2 \max\{s, t\} + k^2 + 1$. Suppose $\att_G(C) \geq f(s,t,k)$. If $G \subsetsim \vec H_1 \, \squareslash \, \vec H_2$ with $\indeg(\vec H_1) \leq s$ and $ \indeg(\vec H_2) \leq t$, then there is a vertex $v$ in $G$ attached to $C$ that is either $C$-diagonal, $(C, P_1)$-magnetic, or $(C, P_2)$-magnetic.
\end{lem}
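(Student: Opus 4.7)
The plan is to argue by contradiction: assume no vertex attached to $C$ is $C$-diagonal, $(C,P_1)$-magnetic, or $(C,P_2)$-magnetic, and show that $\att_G(C) < f(s,t,k)$. Partition the set of attached vertices $N$ according to where their projections lie. Let $N_D$ be the $C$-diagonal attached vertices (those with $P_1(v) \notin P_1(C)$ and $P_2(v) \notin P_2(C)$); let $N_1$ be the attached vertices with $P_1(v) \in P_1(C)$ but $P_2(v) \notin P_2(C)$ (the ``$P_1(C)$-strip only'' vertices); symmetrically $N_2$ the attached vertices with $P_2(v) \in P_2(C)$ but $P_1(v) \notin P_1(C)$; and $N_3$ the attached vertices in $P_1(C) \times P_2(C)$. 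Every attached vertex falls into exactly one of $N_D, N_1, N_2, N_3$. By assumption $N_D = \varnothing$, and trivially $|N_3| \leq |P_1(C) \times P_2(C)| \leq k^2$.

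The main step is to bound $|N_1|$ and $|N_2|$ using the indegree hypotheses. Consider $v \in N_1$. Since $v$ is attached to $C$ and lies in the $P_1(C)$-strip but not in $P_1(C) \times P_2(C)$, the definition of $(C,P_2)$-magnetic tells us that $v$ fails this property only because $v$ fails to be $(C+v,P_2)$-attractive. Since $P_2(C+v) - P_2(v) = P_2(C)$, this means there exists $w \in C$ such that $\overrightarrow{P_2(v)\, P_2(w)} \in E(\vec H_2)$. Thus for each $v \in N_1$ we can assign a witness vertex $y(v) := P_2(w) \in P_2(C)$ together with an arc from $P_2(v)$ into $y(v)$ in $\vec H_2$.

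Now count: for each fixed $y \in P_2(C)$, the number of distinct $H_2$-vertices $u$ with an arc $\overrightarrow{u y} \in E(\vec H_2)$ is at most $d^{-}_{\vec H_2}(y) \leq t$. Hence the number of distinct values of $P_2(v)$ among vertices of $N_1$ with $y(v)=y$ is at most $t$. Each such $P_2(v)$-value is combined with some $P_1(v) \in P_1(C)$, giving at most $|P_1(C)| \leq k$ vertices of $G$ per combination. Summing over the at most $k$ choices of $y \in P_2(C)$ yields $|N_1| \leq k \cdot t \cdot k = k^2 t$. The symmetric argument, using $\indeg(\vec H_1) \leq s$ and the failure of $(C,P_1)$-magnetism for vertices in $N_2$, gives $|N_2| \leq k^2 s$.

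Combining the bounds yields $\att_G(C) = |N_1|+|N_2|+|N_3| \leq k^2 s + k^2 t + k^2 \leq 2k^2\max\{s,t\} + k^2$, which is strictly less than $f(s,t,k) = 2k^2\max\{s,t\} + k^2 + 1$, contradicting the hypothesis. The only real obstacle is being careful with the case analysis and making sure that failing to be magnetic really does force the claimed arc (using that $P_1(v) \notin P_1(C)$ and $P_2(v) \notin P_2(C)$ for vertices of $N_1$ and $N_2$ respectively, so that the ``$-P_i(v)$'' in the attractive definition removes nothing relevant); the counting itself is routine once those arcs are in hand.
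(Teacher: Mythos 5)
Your proof is correct and uses essentially the same counting argument as the paper: after ruling out diagonal vertices, the indegree bounds force any non-magnetic attached vertex in a strip to have an arc into the (small) projection of $C$, and pigeonhole on indegrees shows there cannot be too many such vertices. The only difference is organizational: the paper argues by contrapositive in two pigeonhole steps (first choosing a strip containing $k^2\max\{s,t\}+1$ vertices, then extracting $k\max\{s,t\}+1$ distinct $H_2$-projections), whereas you bound both strips directly and sum, which incidentally yields the slightly sharper threshold $k^2(s+t+1)$ in place of $2k^2\max\{s,t\}+k^2+1$.
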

\begin{proof}
    Suppose $C$ is $(p,q)$-projected in $G \subsetsim \vec H_1 \, \squareslash \, \vec H_2$. Of the vertices attached to $C$, at least $\att_G(C) - pq \geq 2k^2 \max\{s, t\} + 1$ vertices are embedded outside $P_1(C) \times P_2(C)$. If no vertex is $C$-diagonal, then by the pigeonhole principle, there are at least $k^2 \max\{s, t\} + 1$ vertices in $(P_1(C) \times V(H_2)) \setminus (P_1(C) \times P_2(C))$ or $(V(H_1)\times P_2(C)) \setminus (P_1(C) \times P_2(C))$.
    Without loss of generality, suppose the former case happens. Since $|P_1(C)| \leq k$, these vertices project onto at least $k\max\{s, t\} + 1$ distinct vertices in $H_2$. Since $\indeg(H_2) \leq t$ and $\left|P_2(C)\right| \leq k$, it is not possible for all of these projections to have an arc directed towards $P_2(C)$. Therefore, there is a vertex that is $(C, P_2)$-magnetic.
\end{proof}

\begin{restatable}{thm}{restatemainthmopt}
    \label{thm:main_thm_opt}
    For any integers $s, t, k \geq 1$, there is a graph $G$ of treewidth $k$ such that
    for any digraphs $\vec H_1$ and $\vec H_2$ with $\indeg(\vec H_1) \leq s$ and $\indeg(\vec H_2) \leq t$, if $G \subsetsim \vec H_1 \, \squareslash \, \vec H_2$, then there is a $(k+1)$-clique of $G$ that projects onto $k+1$ distinct vertices in $\vec H_1$ or $\vec H_2$.
\end{restatable}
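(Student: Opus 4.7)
The plan is to construct $G$ as a deep $k$-tree and derive a contradiction by iteratively applying \cref{lem:bad_vertices}. Define $G = G^{(D)}$ recursively: $G^{(0)} := K_{k+1}$, and $G^{(d+1)}$ is obtained from $G^{(d)}$ by attaching $M := f(s,t,k)$ new vertices to each $k$-clique of $G^{(d)}$, each forming a fresh $(k+1)$-clique with it. For $D$ a linear function of $k$, $G$ is a $k$-tree, so $\tw(G)=k$, and every $k$-clique of $G^{(i)}$ has at least $M$ attached vertices in $G^{(i+1)} \subseteq G$, ensuring that \cref{lem:bad_vertices} always applies at the $i$-th stage.

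Suppose for contradiction that $G \subsetsim \vec H_1 \, \squareslash \, \vec H_2$ with $\indeg(\vec H_1) \leq s$ and $\indeg(\vec H_2) \leq t$, and that no $(k+1)$-clique of $G$ projects onto $k+1$ distinct vertices in either factor. Starting from a $k$-subclique $C_0$ of the base $K_{k+1}$, with projection $(p_0, q_0)$ satisfying $p_0 q_0 \geq k$, I build a sequence of $k$-cliques $C_i \subseteq V(G^{(i)})$ of projection $(p_i,q_i)$, aiming at the invariant $p_i + q_i \geq p_0 + q_0 + i$. At step $i$, \cref{lem:bad_vertices} yields a vertex $v_{i+1}$ attached to $C_i$ that is $C_i$-diagonal, $(C_i, P_1)$-magnetic, or $(C_i, P_2)$-magnetic; correspondingly, the $(k+1)$-clique $C_i + v_{i+1}$ has projection $(p_i + \delta_1, q_i + \delta_2)$ with $(\delta_1, \delta_2) \in \{(1,1),(1,0),(0,1)\}$. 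If either coordinate hits $k+1$, the $(k+1)$-clique $C_i + v_{i+1}$ directly witnesses the theorem's conclusion; otherwise, I choose $C_{i+1}$ as a $k$-subclique of $C_i + v_{i+1}$ obtained by removing a vertex whose $P_1$- or $P_2$-coordinate is shared in $C_i + v_{i+1}$, which exists by pigeonhole since $|C_i + v_{i+1}| = k+1$ strictly exceeds $\max(p_i + \delta_1, q_i + \delta_2) \leq k$. A routine case analysis then verifies that $p_{i+1} + q_{i+1} \geq p_i + q_i + 1$ in the diagonal subcase and in the ``generic'' magnetic subcases, while in the problematic subcases the invariant is at least maintained. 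Since $p_i + q_i \leq 2k$ throughout, after at most $2k$ steps either the invariant forces $p_i + q_i > 2k$ (impossible) or we have already produced the desired $(k+1)$-clique, giving the contradiction.

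The main obstacle I anticipate is the delicate ``plateau'' situation in the magnetic subcases, where no doubly-shared vertex exists in $C_i + v_{i+1}$ so that a naive choice of $C_{i+1}$ causes the invariant $p + q$ to merely stay constant rather than strictly grow. To handle this, I would observe that once $p_i$ (say) has reached $k$, any $C_i$-diagonal or $(C_i, P_1)$-magnetic attached vertex immediately yields a $(k+1)$-clique with $|P_1| = k+1$; so on a sustained plateau the Lemma must always return a $(C_i, P_2)$-magnetic vertex, and the $(C_i+v_{i+1}, P_2)$-attractive condition requires all $q_i$ arcs from $P_2(C_i)$ to terminate at $P_2(v_{i+1})$, which by $\indeg(\vec H_2) \leq t$ forces $q_i \leq t$. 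Combining this bound with \cref{lem:magnetic_attachments} (which constrains all magnetic candidates to share a common $P_1$-coordinate) should bound the length of any plateau by a function of $s,t,k$, after which the iteration necessarily breaks out and terminates in a contradiction. Choosing $D$ larger than this bound completes the proof.
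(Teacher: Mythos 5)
Your proposal begins with the same architecture the paper uses: grow a deep $k$-tree $G_0 \subseteq G_1 \subseteq \dots$ by attaching $f(s,t,k)$ vertices to every $k$-clique, then repeatedly invoke \cref{lem:bad_vertices} to walk through a sequence of $k$-cliques whose projections you try to blow up until some coordinate reaches $k+1$. That much is faithful to the paper. The genuine gap is exactly the ``plateau'' you flag, and your proposed fix does not close it.

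Concretely: in the magnetic case, the new vertex $v_{i+1}$ increases exactly one of $|P_1|,|P_2|$, and after you delete a vertex of $C_i+v_{i+1}$ to return to a $k$-clique the sum $p_{i+1}+q_{i+1}$ can simply stay equal to $p_i+q_i$. Your escape plan assumes that ``once $p_i$ has reached $k$'' the magnetic vertices are forced to be $(C_i,P_2)$-magnetic — but the whole difficulty is that on a plateau $p_i$ need never reach $k$. The inequality $q_i\le t$ (or the analogous $p_i\le s$) extracted from $\indeg$ bounds a coordinate, not the number of plateau steps; nothing stops the iteration from oscillating with $p_i,q_i$ small forever. Likewise, \cref{lem:magnetic_attachments} only applies if you know $C_i$ carries a $(C_i,P_1)$-attractive vertex — an invariant you have not set up — and even granting it, the lemma pins down the $P_1$-coordinate of the next magnetic vertex but does not rule out its existence.

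The paper's proof resolves exactly this difficulty with a minimality trick rather than by trying to force $p_i+q_i$ to grow. Under the assumption that the theorem fails, one takes the minimum $\ell$ such that every clique satisfies $|P_1(C)|+|P_2(C)|\le\ell$, and calls a clique \emph{full} when equality holds. Minimality of $\ell$ yields a graph in which a full $(k+1)$-clique must exist, while the construction ensures no full $k$-clique exists (\cref{claim:k_cliques}) and no full $(k+1)$-clique contains a redundant vertex (\cref{claim:redu_full}). The recursive step (\cref{claim:recursive_step}) then keeps $p_i+q_i=\ell$ fixed and instead increases $p_i$ by $1$ at every step; this is possible because the invariants that $v_i$ is $(C_i,P_1)$-attractive and $(C_i,P_2)$-redundant, together with \cref{lem:magnetic_attachments} and \cref{claim:redu_full}, rule out the $(C_i-v_i',P_2)$-magnetic option entirely — that is, they eliminate the plateau outright instead of bounding its length. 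Without the fullness/redundancy machinery supplied by the minimality of $\ell$, the ``routine case analysis'' you defer to has no way to forbid the bad magnetic direction, and the argument stalls.
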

\cref{thm:main_thm_opt,obs:induced_tourn} imply that $\omega(H_1) \geq k+1$ or $\omega(H_2) \geq k+1$, where \defn{$\omega(G)$} is the \defn{clique number} of $G$. Therefore, $\tw(H_1) \geq \omega(H_1)- 1 \geq k$ or $\tw(H_2) \geq \omega(H_2) - 1 \geq k$, implying \cref{thm:dp_lb}. Since any graph with treewidth $k$ has simple treewidth at most $k+1$, \cref{thm:main_thm_opt} also implies the treewidth bounds in \cref{thm:dp_ub_stw} are optimal.

\begin{proof}
    Suppose to the contrary that for every graph $G$ with $\tw(G) = k$, there are digraphs $\vec H_1$ and $\vec H_2$ such that:
    \begin{enumerate}[label=\textbf{(C\arabic*):},leftmargin=4em]
        \item $G \subsetsim \vec H_1 \, \squareslash \, \vec H_2$,
        \item $\indeg(\vec H_1) \leq s$, and $ \indeg(\vec H_2) \leq t$, and
        \item each clique $C$ of $G$ satisfies $\left|P_1(C)\right| \leq k$ and $\left|P_2(C)\right| \leq k$.
    \end{enumerate}
    The condition (C3) implies that there is a minimum integer \defn{$\ell$} such that each clique $C$ of $G$ satisfies $|P_1(C)| + |P_2(C)| \leq \ell$. The integer $\ell$ is well-defined since $\ell \leq 2k$. Therefore, we may assume additionally that:
    \begin{enumerate}[label=\textbf{(C\arabic*):},leftmargin=4em,resume]
        \item each clique $C$ of $G$ satisfies $|P_1(C)| + |P_2(C)| \leq \ell$.
    \end{enumerate}
    A clique $C$ of $G$ is \defn{full} if $|P_1(C)| + |P_2(C)| = \ell$.

    The minimality of $\ell$ implies that there is a graph $G_0$ with treewidth $k$ such that for any digraphs $\vec H_1$ and $\vec H_2$ satisfying (C1)--(C4), some clique $C_0$ of $G_0$ is full. Without loss of generality, let $G_0$ be edge-maximal, so that every clique of $G_0$ is a subset of a $(k+1)$-clique. Hence, we may assume $|C_0| = k+1$.

    We now construct graphs $G_1, \dots, G_k$, where $G_i$ is constructed from $G_{i-1}$.
    For each $k$-clique $C \in \mathcal{C}(G_{i-1},k)$, let $X(C,i) = \{v_1^{C,i}, \dots, v_{f(s,t,k)}^{C,i}\}$, where $f(s,t,k)$ is the function in \cref{lem:bad_vertices}. Note that $f(s,t,k) \geq k^2 + 1$. The graph $G_i$ is defined by:
    \begin{align*}
        V(G_i) & = V(G_{i-1}) \quad \cup \bigcup_{C \in \mathcal{C}(G_{i-1},k)} X(C,i),                           \\
        E(G_i) & = E(G_{i-1}) \quad \cup \bigcup_{C \in \mathcal{C}(G_{i-1},k)} \{vw :  v  \in X(C,i), w \in C\}.
    \end{align*}
    This construction gives a chain of edge-maximal treewidth $k$ graphs: $G_0 \subseteq G_1 \subseteq \dots \subseteq G_{k}$. By assumption, there are digraphs $\vec H_1$ and $\vec H_2$ satisfying (C1)--(C4) for $G_{k}$.

    \begin{claim}
        \label{claim:k_cliques}
        No $k$-clique of $G_{k - 1}$ is full.
    \end{claim}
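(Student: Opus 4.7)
The plan is to derive the claim immediately from \cref{lem:bad_vertices} applied to $G_k$, combined with the fullness assumption and condition (C4). Recall that by assumption the embedding of $G_k$ into $\vec H_1 \, \squareslash \, \vec H_2$ satisfies (C1)--(C4) with the same value of $\ell$ used to define \emph{full}.

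The argument is by contradiction. Suppose some $k$-clique $C$ of $G_{k-1}$ is full, so $|P_1(C)| + |P_2(C)| = \ell$. By construction of $G_k$ from $G_{k-1}$, every vertex of $X(C,k)$ is adjacent in $G_k$ to every vertex of $C$, hence $\att_{G_k}(C) \geq |X(C,k)| = f(s,t,k)$. This is exactly the hypothesis required by \cref{lem:bad_vertices}, which (using (C2) for $G_k$) produces a vertex $v$ attached to $C$ in $G_k$ that is either $C$-diagonal, $(C,P_1)$-magnetic, or $(C,P_2)$-magnetic.

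The remaining step is to unpack the definitions at the beginning of \cref{s:optimal_bounds} and observe that in each of the three cases the clique $C+v$ strictly exceeds the projection budget of $C$. Explicitly: if $v$ is $C$-diagonal then both $P_1(v) \notin P_1(C)$ and $P_2(v) \notin P_2(C)$, so $|P_1(C+v)| + |P_2(C+v)| = \ell+2$; if $v$ is $(C,P_1)$-magnetic then $v$ lies in the $P_2(C)$-strip but outside $P_1(C)\times P_2(C)$, giving $P_1(v) \notin P_1(C)$ and $P_2(v) \in P_2(C)$, so $|P_1(C+v)| + |P_2(C+v)| = \ell+1$; the $(C,P_2)$-magnetic case is symmetric. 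Since $v \notin C$ and $v$ is attached to $C$, the set $C+v$ is a clique of $G_k$ with projection sum at least $\ell+1$, contradicting (C4) applied to $G_k$.

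The main obstacle, to the extent that there is one, is not in this proof but in the preceding setup: the whole point of constructing $G_k$ from $G_{k-1}$ by attaching the sets $X(C,k)$ of size $f(s,t,k)$ is precisely to guarantee that the quantitative hypothesis of \cref{lem:bad_vertices} is satisfied at every $k$-clique of $G_{k-1}$. Once that machinery is in place, the present claim reduces to a one-line consequence of the definitions of \emph{diagonal}, \emph{magnetic}, and \emph{full}.
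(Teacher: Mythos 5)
Your proof is correct, but it uses a heavier tool than the paper does. The paper's argument is a one-step pigeonhole: since $f(s,t,k) \geq k^2+1 \geq pq+1$ vertices of $V(G_k)\setminus V(G_{k-1})$ are attached to $C$, and the box $P_1(C)\times P_2(C)$ has only $pq$ cells, some attached vertex $v$ lands outside it, and then $C+v$ is $(p+1,q)$-, $(p,q+1)$-, or $(p+1,q+1)$-projected, contradicting (C4). You instead invoke \cref{lem:bad_vertices}, which guarantees the stronger conclusion that $v$ can be taken diagonal or magnetic. That lemma's proof is itself a refinement of the same pigeonhole, but its magnetic half genuinely requires the indegree bounds (C2). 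For this claim that extra information is unused: only ``$v$ is outside $P_1(C)\times P_2(C)$'' matters, and that follows already from the counting, without (C2). So your proof is valid, just not as economical; the paper reserves \cref{lem:bad_vertices} for the later claims (\cref{claim:first_step,claim:recursive_step}) where the diagonal/magnetic distinction is actually needed.
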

    \begin{proof}
        Suppose to the contrary that $C$ is a $k$-clique of $G_{k-1}$ that is $(p,q)$-projected with $p + q = \ell$. By construction, there are $f(s,t,k) \geq k^2 + 1 \geq pq + 1$ vertices attached to $C$ in ${V(G_{k}) \setminus V(G_{k-1})}$. So at least one of the vertices $v \in {V(G_{k}) \setminus V(G_{k-1})}$ attached to $C$ is embedded outside $P_1(C) \times P_2(C)$. The clique $C + v$ is then either $(p+1, q)$-projected, $(p, q+1)$-projected or $(p+1, q+1)$-projected, contradicting (C4).
    \end{proof}
    \begin{claim}
        \label{claim:redu_full}
        No full $(k+1)$-clique $C$ of $G_{k-1}$ contains a $C$-redundant vertex.
    \end{claim}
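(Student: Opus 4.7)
The plan is to derive a direct contradiction with \cref{claim:k_cliques}. Suppose, for contradiction, that $C$ is a full $(k+1)$-clique of $G_{k-1}$ and that $v \in C$ is $C$-redundant. By definition of $C$-redundant, there exist vertices $w, w' \in C$ with $w, w' \neq v$ such that $P_1(w) = P_1(v)$ and $P_2(w') = P_2(v)$.

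I would then observe that $C - v$ is a $k$-clique of $G_{k-1}$ whose projections coincide with those of $C$: the witness $w$ shows $P_1(v) \in P_1(C-v)$, hence $P_1(C) = P_1(C-v)$, and analogously $P_2(C) = P_2(C-v)$ using $w'$. Since $C$ is full, $|P_1(C-v)| + |P_2(C-v)| = |P_1(C)| + |P_2(C)| = \ell$, so $C - v$ is itself a full $k$-clique of $G_{k-1}$.

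This contradicts \cref{claim:k_cliques}, which asserts that no $k$-clique of $G_{k-1}$ is full, completing the proof. There is no substantive obstacle here: the entire argument is a one-line deduction once one unpacks the definition of $C$-redundant and notices that removing a $C$-redundant vertex does not change either projection. The work was already done in establishing \cref{claim:k_cliques}; the present claim just promotes that fact from $k$-cliques to full $(k+1)$-cliques with a redundant vertex.
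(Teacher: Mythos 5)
Your proof is correct and matches the paper's argument exactly: remove the $C$-redundant vertex $v$, observe that both projections are unchanged, conclude $C-v$ is a full $k$-clique of $G_{k-1}$, and contradict \cref{claim:k_cliques}. The only difference is that you spell out the redundancy witnesses $w,w'$ explicitly, which the paper leaves implicit.
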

    \begin{proof}
        Suppose $C$ is a full $(k+1)$-clique of $G_{k-1}$, and $v \in C$ is $C$-redundant. Then $|P_1(C - v)| = |P_1(C)|$ and $|P_2(C - v)| = |P_2(C)|$. So $C-v$ is a full $k$-clique in $G_{k-1}$, contradicting \cref{claim:k_cliques}.
    \end{proof}

    \begin{claim}
        \label{claim:first_step}
        There is a $k$-clique $C_0'$ of $G_0$ and a vertex $v_1\in V(G_1) \setminus V(G_0)$ that is $(C_0', P_1)$-magnetic or $(C_0', P_2)$-magnetic. Further, the clique $C_0' + v_1$ is full.
    \end{claim}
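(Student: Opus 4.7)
The plan is to choose $C_0'$ as a well-chosen $k$-subclique of the full $(k+1)$-clique $C_0$, and then to locate a magnetic new vertex $v_1$ by reproducing the proof of \cref{lem:bad_vertices} inside the set $X(C_0',1)$ of new attachments introduced in step $1$ of the construction.

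Since $|C_0|=k+1>k\ge|P_1(C_0)|$ by \textbf{(C3)}, pigeonhole yields some $u\in C_0$ that is $(C_0,P_1)$-redundant, and by \cref{claim:redu_full} this $u$ is not $(C_0,P_2)$-redundant. Setting $C_0':=C_0-u$ and writing $(p,q)$ for the projection sizes of $C_0$ (so $p+q=\ell$), one obtains $|P_1(C_0')|=p$ and $|P_2(C_0')|=q-1$. Consider the $f(s,t,k)=2k^2\max\{s,t\}+k^2+1$ vertices of $X(C_0',1)\subseteq V(G_1)\setminus V(G_0)$, each adjacent exactly to $C_0'$ in $G_1$. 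Injectivity of the embedding puts at most $p(q-1)\le k^2$ of them inside the grid $P_1(C_0')\times P_2(C_0')$, leaving at least $2k^2\max\{s,t\}+1$ outside. A $C_0'$-diagonal vertex in this remainder would yield a clique $C_0'+v$ of projection sum $\ell+1$, contradicting \textbf{(C4)}, so every remaining vertex lies in the $P_1(C_0')$-strip or the $P_2(C_0')$-strip. Running the pigeonhole/indegree calculation from \cref{lem:bad_vertices} inside this residual subset of $X(C_0',1)$ then produces a magnetic vertex $v_1\in X(C_0',1)\subseteq V(G_1)\setminus V(G_0)$.

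Because $v_1$ is magnetic, exactly one of the two projection sizes of $C_0'+v_1$ exceeds the corresponding size for $C_0'$ by $1$, giving either $(p,q)$-projection or $(p+1,q-1)$-projection; in either case the sum of projection sizes equals $\ell$, so $C_0'+v_1$ is full. The subtlety worth highlighting is localising the magnetic vertex to the new attachments in $V(G_1)\setminus V(G_0)$: this is why the counting is carried out inside $X(C_0',1)$, and the constant $f(s,t,k)$ was chosen precisely so that this local counting goes through independently of whatever pre-existing attachments to $C_0'$ already exist in $G_0$.
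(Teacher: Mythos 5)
Your proof is correct and follows essentially the same plan as the paper: pick a $k$-subclique $C_0'$ of the full $(k+1)$-clique $C_0$ whose projection sum is one less than $\ell$, obtain a new attachment $v_1\in X(C_0',1)$ that is diagonal or magnetic, rule out the diagonal case via \textbf{(C4)}, and read off fullness of $C_0'+v_1$.

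The two places you deviate are both reasonable and worth noting. First, you pick the vertex to delete from $C_0$ directly: pigeonhole on $|C_0|>|P_1(C_0)|$ gives a $(C_0,P_1)$-redundant vertex $u$, and \cref{claim:redu_full} then forbids $u$ from being $(C_0,P_2)$-redundant, so $C_0-u$ is $(p,q-1)$-projected. The paper instead appeals to \cref{obs:diagonal} (since $C_0$ is not diagonal, some $C_0-v_0$ is not $(p-1,q-1)$-projected) and then uses \cref{claim:k_cliques} to rule out $(p,q)$-projection. Both routes land in the same place; yours is a bit more constructive and bypasses \cref{claim:k_cliques} at this step, while the paper's phrasing keeps the two symmetric possibilities $(p-1,q)$ and $(p,q-1)$ open (which costs nothing here). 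Second, you explicitly localise the counting to $X(C_0',1)$, essentially replaying the proof of \cref{lem:bad_vertices} inside the induced subgraph on $C_0'\cup X(C_0',1)$. This is a genuine and worthwhile clarification: the paper invokes \cref{lem:bad_vertices} directly and concludes that the resulting bad attachment lies in $V(G_1)\setminus V(G_0)$, which, read literally on $G_k$, is not guaranteed — one has to apply the lemma to the subgraph whose only attachments to $C_0'$ are the new vertices $X(C_0',1)$, exactly as you do. Your reading is the correct one, and you are right that the constant $f(s,t,k)$ is calibrated for precisely this local application.
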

    \begin{proof}
        By the choice of $G_0$, there is a $(k+1)$-clique $C_0$ of $G_0$ that is full. Suppose $C_0$ is $(p',q')$-projected, so $p' + q' = \ell$. Since $p' \leq k$ and $q'\leq k$, $C_0$ is not diagonal. By \cref{obs:diagonal}, there is a vertex $v_0 \in C_0$ such that $C_0':=C_0 - v_0$ is either $(p'-1,q')$-projected, $(p', q'-1)$-projected, or $(p',q')$-projected. By \cref{claim:k_cliques}, $C_0'$ is not $(p',q')$-projected.

        Therefore, $C_0'$ is $(p'-1,q')$-projected or $(p',q'-1)$-projected. By construction of $G_1$, there are $f(s, t, k)$ vertices in $V(G_1) \setminus V(G_0)$ attached to $C_0'$, so (C2) and \cref{lem:bad_vertices} imply there is a vertex $v_1 \in V(G_1) \setminus V(G_0)$ attached to $C_0'$ such that $v_1$ is $C_0'$-diagonal, $(C_0', P_1)$-magnetic, or $(C_0', P_2)$-magnetic. If $v_1$ is $C_0'$-diagonal, then $C_0' + v_1$ is $(p',q'+1)$-projected or $(p'+1,q')$-projected. This contradicts (C4). Therefore, $v_1$ is $(C_0', P_1)$-magnetic or $(C_0', P_2)$-magnetic. As $C_0'$ is $(p'-1,q')$-projected or $(p',q'-1)$-projected, $C_0' + v_1$ is either $(p'+1,q'-1)$-projected, $(p',q')$-projected, or $(p'-1,q'+1)$-projected. In each case, $C_0' + v_1$ is full.
    \end{proof}
    Let $C_0'$ and $v_1 \in V(G_1) \setminus V(G_0)$ be as in \cref{claim:first_step}. We may assume that $v_1$ is $(C_0', P_1)$-magnetic, since the argument for the other case is symmetric.  Define $C_1:= C_0' + v_1$ and suppose $C_1$ is $(p,q)$-projected, so $p + q = \ell$. If $p = 1$, then because $|P_1(C_1)|\cdot |P_2(C_1)|\geq |C_1| = k+1$, we have $q = k+1$, contradicting (C3). So $p \geq 2$. Define $x:= k+2-p$, so $2 \leq x \leq k$.

    \begin{claim}
        \label{claim:recursive_step}
        There are cliques $C_1, C_2, \dots, C_x$ and vertices $v_1, v_2, \dots, v_x$ such that for each $i \in \{ 1, \dots, x \}$:
        \begin{enumerate}[label=\textbf{(I\arabic*):},leftmargin=4em]
            \item $C_i$ is a $(k+1)$-clique of $G_i$ that is $(p+i-1, q-i+1)$-projected, and hence full;
            \item $v_1, v_2, \dots, v_i \in C_i$ and $v_i \in V(G_i) \setminus V(G_{i-1})$;
            \item  $v_1, \dots, v_i$ have distinct $H_1$-projections, and $P_1(\{v_1, \dots, v_i\})\cap P_1(C_i \setminus \{ v_1, \dots, v_i \}) = \varnothing$;
            \item $v_i$ is $(C_i, P_1)$-attractive;
            \item $v_i$ is $(C_i, P_2)$-redundant.
        \end{enumerate}
    \end{claim}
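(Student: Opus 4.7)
The plan is to prove the claim by induction on $i$. The base case $i=1$ is handled by \cref{claim:first_step}, so I focus on the inductive step: assume (I1)--(I5) hold at some step $i<x$, and construct $C_{i+1}$ and $v_{i+1}$.

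First I identify a vertex to remove from $C_i$. By (I3), the $k+1-i$ vertices in $C_i\setminus\{v_1,\dots,v_i\}$ project onto only $p-1$ distinct $H_1$-values; since $i<x=k+2-p$ gives $k+1-i>p-1$, the pigeonhole principle yields a $(C_i,P_1)$-redundant vertex $v\in C_i\setminus\{v_1,\dots,v_i\}$. Because $C_i$ is a full $(k+1)$-clique of $G_i\subseteq G_{k-1}$, \cref{claim:redu_full} says $v$ cannot also be $(C_i,P_2)$-redundant. Setting $C_i':=C_i-v$ then yields a $(p+i-1,\,q-i)$-projected $k$-clique with $\att_{G_{i+1}}(C_i')\geq f(s,t,k)$, so \cref{lem:bad_vertices} together with (C2) produces some $v_{i+1}\in V(G_{i+1})\setminus V(G_i)$ attached to $C_i'$ that is $C_i'$-diagonal, $(C_i',P_1)$-magnetic, or $(C_i',P_2)$-magnetic.

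The strategy is to rule out the first two cases so that $v_{i+1}$ is forced to be $(C_i',P_1)$-magnetic. The diagonal case is immediate: $C_i'+v_{i+1}$ would then be $(p+i,\,q-i+1)$-projected, violating (C4). The hard part---and the main obstacle I anticipate---will be ruling out $(C_i',P_2)$-magnetic. The plan is to show $v_i$ would become $(C_i'+v_{i+1})$-redundant in a full $(k+1)$-clique, contradicting \cref{claim:redu_full}. Concretely: by (I4) the vertex $v_i$ is $(C_i,P_1)$-attractive, which is inherited by the subclique $C_i'$, so \cref{lem:magnetic_attachments} forces $P_1(v_i)=P_1(v_{i+1})$, making $v_i$ a $(C_i'+v_{i+1},P_1)$-redundant vertex witnessed by $v_{i+1}$. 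Simultaneously, (I5) supplies a witness $u\in C_i\setminus\{v_i\}$ with $P_2(u)=P_2(v_i)$; since $v$ is not $(C_i,P_2)$-redundant its $P_2$-value is unique in $C_i$, so necessarily $u\neq v$, and $u\in C_i'$ certifies that $v_i$ is also $(C_i'+v_{i+1},P_2)$-redundant, as required.

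With $v_{i+1}$ now $(C_i',P_1)$-magnetic, I set $C_{i+1}:=C_i'+v_{i+1}$ and the remaining verifications are routine: (I1) holds because $C_{i+1}$ is $(p+i,\,q-i)$-projected with $|P_1|+|P_2|=\ell$; (I2) holds because each $v_j$ for $j\leq i$ lies in $C_i-v\subseteq C_{i+1}$, and $v_{i+1}\in V(G_{i+1})\setminus V(G_i)$; (I3) follows because $P_1(v_{i+1})\notin P_1(C_i')=P_1(C_i)$ by the magnetic property, so it is distinct from each $P_1(v_j)$ and disjoint from $P_1(C_{i+1}\setminus\{v_1,\dots,v_{i+1}\})\subseteq P_1(C_i\setminus\{v_1,\dots,v_i\})$; (I4) is the attractivity built into $(C_i',P_1)$-magneticity; and (I5) follows from \cref{obs:redundant} applied to $v_{i+1}$ in the $P_2(C_i')$-strip and attached to $C_i'$.
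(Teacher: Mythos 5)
Your proof is correct and takes essentially the same inductive route as the paper: pick a $(C_i,P_1)$-redundant vertex in $C_i \setminus \{v_1,\dots,v_i\}$ to remove, invoke \cref{lem:bad_vertices}, and eliminate the diagonal and $(C_i',P_2)$-magnetic alternatives before setting $C_{i+1}:=C_i'+v_{i+1}$. The only cosmetic difference is that you appeal to \cref{claim:redu_full} directly to see the removed vertex $v$ is not $(C_i,P_2)$-redundant, whereas the paper first derives that $C_i-v_i'$ is $(p+i-1,q-i)$-projected via \cref{claim:k_cliques} and then reads off $P_2(v_i')\notin\{P_2(v_1),\dots,P_2(v_i)\}$; the two observations are interchangeable for the purpose of ruling out $u=v$ in the $P_2$-magnetic case.
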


    \begin{proof}
        We proceed by induction on $i$. When $i = 1$, $v_1$ and $C_1$ satisfy (I1)--(I5) by \cref{claim:first_step}. For the inductive step, suppose $1 \leq i \leq x-1$ and $C_i$, $v_1, v_2, \dots, v_{i} $ have been found. By (I1), $|P_1(C_i)| = p+i - 1 \leq k$. (I2) and (I3) imply that $|P_1(C_i \setminus \{ v_1, \dots, v_i \})| = |P_1(C_i) | - i \leq k-i$. However, $C_i \setminus \{ v_1, \dots, v_i \}$ is a $(k+1-i)$-clique. Since $i \leq x-1 \leq k-1$, we have $k+1-i \geq 2$. By the pigeonhole principle, there are two vertices in $C_i \setminus \{ v_1, \dots, v_i \}$ that share the same $H_1$-projection. Let one of the two vertices be $v_i'$. Then $|P_1(C_i - v_i')| = |P_1(C_i)| = p+i-1$. By (I1) and \cref{claim:k_cliques}, $C_i-v_i'$ is $(p+i-1, q-i)$-projected, implying that $P_2(v_i') \notin \{ P_2(v_1), \dots, P_2(v_i) \}$.

        By construction, there are $f(s, t, k)$ vertices in $V(G_{i+1}) \setminus V(G_i)$ attached to the clique $C_i - v_i'$ in $G_{i+1}$. \cref{lem:bad_vertices} implies that there is a vertex $v_{i+1} \in V(G_{i+1}) \setminus V(G_i)$ attached to $C_i - v_i'$ that is either $(C_i - v_i')$-diagonal, $(C_i - v_i', P_1)$-magnetic, or $(C_i - v_i', P_2)$-magnetic. The vertex $v_{i+1}$ is not $(C_i - v_i')$-diagonal, otherwise the clique $C_i - v_i' + v_{i+1}$ contradicts (C4).

        We now claim that $v_{i+1}$ is not $(C_i - v_i', P_2)$-magnetic.
        Suppose $v_{i+1}$ is $(C_i - v_i', P_2)$-magnetic. By (I4), $v_{i}$ is $(C_i - v_i', P_1)$-attractive. Therefore, \cref{lem:magnetic_attachments} implies that $P_1(v_{i+1}) = P_1(v_i)$.
        So $v_i$ is $(C_i - v_i' + v_{i+1}, P_1)$-redundant.
        By (I5), $v_i$ is $(C_i, P_2)$-redundant. So there is a vertex $w \in C_i -v_i$ such that $P_2(w) = P_2(v_i)$. Because $P_2(v_i') \notin \{ P_2(v_1), \dots, P_2(v_i) \}$, we have $P_2(v_i') \neq P_2(v_i)$. Therefore, $w \neq v_i'$ and $w \in C_i - v_i'$, and $v_i$ is $(C_i - v_i', P_2)$-redundant. Hence, $v_i$ is $(C_i - v_i' + v_{i+1}, P_2)$-redundant and therefore $v_i$ is $(C_i - v_i' + v_{i+1})$-redundant. As $v_{i+1}$ is assumed to be $(C_i - v_i', P_2)$-magnetic, $v_{i+1}$ is embedded in $(P_1(C_i - v_i') \times V(H_2)) \setminus (P_1(C_i - v_i') \times P_2(C_i - v_i'))$. Therefore, $C_i - v_i' + v_{i+1}$ is $(p+i-1, q-i+1)$-projected and hence full, but $C_i - v_i' + v_{i+1}$ contains a $(C_i - v_i' + v_{i+1})$-redundant vertex, namely $v_{i}$. This contradicts \cref{claim:redu_full}.

        Therefore, $v_{i+1}$ is $(C_i - v_i', P_1)$-magnetic. Set $C_{i+1}:= C_i - v_i' + v_{i+1}$. Then $C_{i+1}$ is $(p+i, q-i)$-projected. This shows (I1). By construction, (I2), (I3), (I4) are satisfied. (I5) is satisfied because $v_{i+1}$ is embedded in the $P_2(C_i - v_i')$-strip.
    \end{proof}
    By \cref{claim:recursive_step}, the clique $C_x$ is $(k+1, q - (k+2-p) + 1)$-projected, which contradicts (C3). This completes the proof.
\end{proof}

\section{Simple Treewidth and Strong Products}
\label{s:stw_strong_products}

This section is dedicated to proving \cref{thm:sp_lb_stw}. This is done by proving \cref{prop:stw_projections}, which in turn implies \cref{thm:sp_lb_stw}.
The proof of \cref{prop:stw_projections} uses the following lemma by \citet*[Lemma 4.6]{LNW}.
\begin{lem}
    \label{lem:lnw2}
    For every integer $k \geq 1$, there exists a  graph $G$ of treewidth $k$ such that for any graphs $H_1$ and $H_2$ with $\omega(H_1) \leq k$ and $\omega(H_2) \leq k$, if $G \subsetsim H_1 \boxtimes H_2$, then there is a $(p,q)$-projected $(k+1)$-clique in $G$ with $p + q \geq k + 3$.
\end{lem}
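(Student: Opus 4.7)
My plan is to construct $G$ as a large, recursively-built $k$-tree and derive the projection bound by a pigeonhole argument. Define $G_0 := K_{k+1}$, and for $i \geq 1$, form $G_i$ from $G_{i-1}$ by adding, for every $k$-clique $F \subseteq V(G_{i-1})$, a set of $N$ new vertices, each joined to every vertex of $F$. Set $G := G_T$ for parameters $N$ and $T$ depending only on $k$ and to be chosen large. Each step attaches vertices to existing $k$-cliques, so $G$ is a $k$-tree and $\tw(G) = k$. Any non-vacuous use of the lemma requires $G$ to contain $K_{k+1}$, which is built into $G_0$.

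Now suppose $G \subsetsim H_1 \StrongProd H_2$ with $\omega(H_1), \omega(H_2) \leq k$, and assume for contradiction that every $(k+1)$-clique of $G$ is $(p, q)$-projected with $p + q \leq k + 2$. Combined with the constraints $p, q \leq k$ and $p q \geq k + 1$, very few $(p, q)$ pairs remain legal (for instance $(2, k)$ or $(3, k-1)$ up to symmetry). The crucial geometric consequence I would use: whenever $F$ is an $(a, b)$-projected $k$-face and $v$ is a vertex attached to $F$, the constraint on $F + v$ pins the position of $v$ to a small, structured region of $V(H_1) \times V(H_2)$ determined by $P_1(F)$, $P_2(F)$, and the clique-number bounds on the factors. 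In particular, $v$ must essentially lie in a strip consisting of rows already used by $F$ combined with a controlled set of columns, or the transpose, with very limited additional freedom.

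The hardest part will be iterating this confinement across $T$ levels and quantifying the pigeonhole. I would prove by induction on $i$ that, under the assumption on $(k+1)$-cliques of $G_i$, the used positions of $V(G_i)$ project onto at most $f(k, i)$ distinct rows of $V(H_1)$ and at most $f(k, i)$ distinct columns of $V(H_2)$, for some function $f$ polynomial in $i$ for fixed $k$. Once this bound is in hand, for $N$ sufficiently large compared to $f(k, T)^2$, the $N$ new attachments to some particular $k$-face $F$ at step $T$ cannot all receive distinct positions within the admissible region, contradicting injectivity of the embedding. Choosing $N$ and $T$ as appropriate functions of $k$ then yields the contradiction; the delicate step is capturing the inductive growth of $f(k, i)$ and showing it remains polynomial rather than exponential, which relies on the narrowness of the legal $(p, q)$ pairs forcing new attachments to reuse rows or columns already present in the embedding of $G_{i-1}$.
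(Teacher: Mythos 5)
Your central inductive claim---that, under the assumption that every $(k+1)$-clique is $(p,q)$-projected with $p+q\leq k+2$, the image of $V(G_i)$ occupies at most $f(k,i)$ distinct rows of $H_1$ and at most $f(k,i)$ distinct columns of $H_2$ with $f$ polynomial in $i$---is false, and the rest of the argument rests on it. For a concrete failure, suppose $K_{k+1}$ is $(2,k)$-projected; then some column of the grid $P_1(K_{k+1})\times P_2(K_{k+1})$ contains a single vertex $u$, and the $k$-face $F=K_{k+1}-u$ is $(2,k-1)$-projected, so $|P_1(F)|+|P_2(F)|=k+1<k+2$. Any attached vertex $v$ with $P_1(v)\in P_1(F)$ and $P_2(v)\notin P_2(F)$ yields a $(2,k)$-projected $(k+1)$-clique $F+v$, which satisfies your constraint. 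The only requirement on $P_2(v)$ is that it lie in the common $H_2$-neighbourhood of the $(k-1)$-clique $P_2(F)$; since the attached vertices are pairwise non-adjacent in $G$, their $H_2$-projections need not form a clique, and the bound $\omega(H_2)\leq k$ places no limit on how many such common neighbours exist. Hence all $N$ attachments to $F$ can land in $N$ distinct new columns, so the number of columns used grows by at least $N$ at a single step. Because the number of $k$-cliques in $G_i$ also grows exponentially, $f(k,i)$ is at best of order $N(kN)^i$, which is not polynomial in $i$ and, fatally, depends on $N$ itself---making the final pigeonhole ``$N > f(k,T)^2$'' circular.

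You should also be aware that this statement is not proved in the paper at all; it is cited verbatim as Lemma~4.6 of Liu, Norin and Wood. Your iterated-attachment construction of $G$ is in the right family (the proof of Proposition~\ref{prop:stw_projections} in this paper uses a close relative of it), but the contradiction there is extracted locally rather than globally: one tracks how a ``full'' $(k+1)$-clique (one with $p+q$ maximal) forces a neighbouring $k$-clique to also be full or to contain a redundant vertex, and shows this propagation eventually produces a clique whose projection violates the clique-number bound. A global confinement of rows and columns cannot work for the reason above, so the fix is not a better count but a different invariant.
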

\begin{prop}
    \label{prop:stw_projections}
    For each integer $k \geq 3$, there exists a graph $G$ of simple treewidth $k$ such that for any graphs $H_1$ and $H_2$ with $\omega(H_1) \leq k-1$ and $\omega(H_2) \leq k-1$, if $G \subsetsim H_1 \boxtimes H_2$, then there is a $(p,q)$-projected clique in $G$ with $p + q \geq k + 3$.
\end{prop}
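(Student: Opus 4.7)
The plan is to derive \cref{prop:stw_projections} from \cref{lem:lnw2} applied with parameter $k-1$. Let $G_0$ be the graph supplied by \cref{lem:lnw2} at parameter $k-1$; then $\tw(G_0)=k-1$ and hence $\stw(G_0)\leq k$, and for any $H_1,H_2$ with $\omega(H_i)\leq k-1$, every embedding $G_0\subsetsim H_1\boxtimes H_2$ yields a $(p',q')$-projected $k$-clique $C$ of $G_0$ with $p'+q'\geq k+2$ and $p',q'\leq k-1$.

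To push the projection sum from $k+2$ up to $k+3$, I would form $G$ from $G_0$ by attaching, to each $k$-clique $C$ of $G_0$, a small tree-shaped gadget of auxiliary vertices. The root of this gadget is a new vertex $v_C$ adjacent to $C$; further gadget vertices are attached recursively to the newly-created $k$-cliques lying in $C+v_C$ (for example, $(C-c)+v_C$ for $c\in C$), and so on. Every $k$-clique used during this extension contains at least one gadget vertex, so it is absent from $G_0$ and therefore usable exactly once under the simple $k$-tree constraint; consequently $G$ is a subgraph of a simple $k$-tree and $\stw(G)\leq k$. Taking a disjoint union with $K_{k+1}$ if necessary makes $\stw(G)=k$ exactly.

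For the embedding argument, suppose $G\subsetsim H_1\boxtimes H_2$ with $\omega(H_i)\leq k-1$, and pick the $k$-clique $C$ of $G_0$ provided by \cref{lem:lnw2}. The rectangle $P_1(C)\times P_2(C)$ has $p'q'\leq(k-1)^2$ positions, of which at most $(k-1)^2-k$ are `holes' outside $C$. By choosing the depth of the gadget attached to $C$ large enough as a function of $k$, the number of gadget vertices associated to $C$ exceeds this hole count, so pigeonhole forces at least one gadget vertex $v$ to be embedded outside $P_1(C)\times P_2(C)$. Tracing the unique gadget path from $v$ back to $C$ produces a clique containing $v$ together with a subset of $C$, whose projection sum is at least $p'+q'+1\geq k+3$, as required.

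The main obstacle is this last step: a gadget vertex $v$ several levels deep is adjacent not to $C$ itself but to a modified clique obtained from $C$ by replacing some vertices $c_1,c_2,\ldots$ with intermediate gadget vertices $v_1,v_2,\ldots$. To ensure that the witnessed clique still has projection sum at least $k+3$, one must choose the removed vertices $c_i$ carefully so that $C$'s projection structure is preserved (each removal can drop $|P_1|$ or $|P_2|$ by at most $1$) and analyse the cases according to whether each intermediate $v_i$ falls in a hole of $P_1(C)\times P_2(C)$ or already in a new strip — in the latter case the desired bound is reached early. Once this projection propagation is set up, the proposition follows.
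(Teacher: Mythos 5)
Your starting point matches the paper's: apply \cref{lem:lnw2} with parameter $k-1$ to get $G_0$ of treewidth $k-1$ with a guaranteed $(p',q')$-projected $k$-clique $C$ with $p'+q'\geq k+2$, then build a supergraph $G$ with $\stw(G)\leq k$ and force the projection sum up to $k+3$. However, the pigeonhole step at the heart of your gadget argument has a genuine gap, and it is essentially the obstacle you yourself flag without resolving.

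The problem is that a gadget vertex several levels deep is only constrained to lie in the rectangle $P_1(C_i)\times P_2(C_i)$ of the clique $C_i$ it is actually attached to, and that constraint is only in force if $C_i$ is itself full (i.e.\ $|P_1(C_i)|+|P_2(C_i)|=k+2$, so that landing outside the rectangle would immediately yield a projection sum $\geq k+3$). But when you pass from $C_{i-1}$ to $C_i=(C_{i-1}-c)+v_{i-1}$ with $v_{i-1}$ sitting in a hole of the rectangle, the projection sum can drop (removing $c$ can lose a coordinate in each factor, and adding $v_{i-1}$ from a hole need not recover both). Once $C_i$ is not full, the next gadget vertex is free to leave $P_1(C_i)\times P_2(C_i)$ without producing a clique of projection sum $k+3$, and no amount of extra depth restores the constraint. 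In other words, the ``number of gadget vertices associated to $C$'' is not a quantity confined to a single rectangle, so the pigeonhole count against $p'q'-k$ holes does not apply. Showing that one can ``choose the removed vertices $c_i$ carefully so that $C$'s projection structure is preserved'' is exactly the missing content, and it is not clear this can always be done.

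The paper avoids the depth recursion entirely. It uses a two-layer construction: attach exactly two new vertices to each $k$-clique of $G_0$ (this is the most one can do without pushing a $k$-clique into three bags), and then attach $(k-1)^2+1$ vertices to each $(k-1)$-clique of the resulting graph $G_1$ (this is harmless for $\stw$ since only a $(k-1)$-set is shared). The second layer immediately gives that no $(k-1)$-clique of $G_1$ can be full, and also that a full $k$-clique has no redundant vertex. The heavy lifting is then a purely combinatorial analysis of a full $k$-clique $C$ (orderings, the unique ``jump index,'' and the partition of $C$ into the four sets $E_1,\dots,E_4$) that forces $C$ to sit in a cross-shaped configuration. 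One of the two new vertices attached to $C$ must then land at a grid point $(w_i,u_j)$, and swapping it in produces a full $(k-1)$-clique of $G_1$ --- contradiction. None of this resembles a pigeonhole over a deep gadget; the structural characterisation of what a full $k$-clique can look like is what makes the argument close, and that is precisely the ingredient your proposal does not supply.
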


\begin{proof}
    \cref{lem:lnw2} implies that there is a graph $G_0$ with treewidth $k-1$ such that for any graphs $H_1$ and $H_2$ with $\omega(H_1), \omega(H_2) \leq k -1$, if $G_0 \subsetsim H_1 \boxtimes H_2$, then there is a $(p,q)$-projected $k$-clique such that $p + q \geq k+2$. We construct $G$ from $G_0$ in two steps:
    \begin{itemize}
        \item For each $k$-clique $C \in \mathcal{C}(G_0,k)$, let $X_{C} := \{v_1^C, v_2^C\}$ be a set of two new vertices. Define the graph $G_1$ by:
              \begin{align*}
                  V(G_1) & = V(G_0) \cup \bigcup_{C \in \mathcal{C}(G_0,k)} X_{C},                          \\
                  E(G_1) & = E(G_0) \cup \bigcup_{C \in \mathcal{C}(G_0,k)} \{vw :  v \in X_{C}, w \in C\}.
              \end{align*}
        \item For each $(k-1)$-clique $C' \in \mathcal{C}(G_1, k-1)$, let $X_{C'} = \{v_1^{C'}, \dots, v_{(k-1)^2 + 1}^{C'}\}$ be a set of $(k-1)^2 + 1$ new vertices. Define the graph $G$ by:
              \begin{align*}
                  V(G) & = V(G_1) \cup \bigcup_{C' \in \mathcal{C}(G_1, k-1)} X_{C'},                            \\
                  E(G) & = E(G_1) \cup \bigcup_{C' \in \mathcal{C}(G_1, k-1)} \{v w :  v \in X_{C'}, w \in C'\}.
              \end{align*}
    \end{itemize}
    \begin{claim}
        $\stw(G) \leq k$.
    \end{claim}
    \begin{proof}
        As $\tw(G_0) \leq k -1$, there is a tree-decomposition $(T_0, \mathcal{B}_0)$ of $G_0$ such that each bag has size at most $k$. We may assume that no two bags are identical. Since each bag has size at most $k$, if a $k$-clique $C$ of $G_0$ is contained in a bag $B_x \in \mathcal{B}_0$, then $B_x = C$. Since no two bags are identical, each $k$-clique $C$ of $G_0$ is contained in a single bag.

        We modify $(T_0, \mathcal{B}_0)$ to construct a tree-decomposition $(T_1, \mathcal{B}_1)$ of $G_1$. The vertices of each bag in $B_x \in \mathcal{B}_0$ either induce a $k$-clique $C$ in $G_0$, or they do not. In the former case, we modify $T_0$ by adding a leaf vertex $x'$ adjacent to $x$. We then add $v_1^C$ to $B_x$, and create a new bag $B_{x'}:= C \cup \{v_2^C\}$. In the latter case, leave $T_0$ and $B_x$ unchanged. Let $T_1$ be the resulting tree, and let $\mathcal{B}_1$ be the collection of all bags. Then $(T_1, \mathcal{B}_1)$ is a tree-decomposition of $G_1$ in which each bag of $\mathcal{B}_1$ has size at most $k+1$. Therefore, $\tw(G_1) \leq k$.

        To show that $\stw(G_1) \leq k$, we verify that for each set $S$ of $k$ vertices in $G_1$, there are at most two bags containing $S$. If there is a vertex $v \in S \setminus V(G_0)$, then $v = v_i^C$ for some $k$-clique $C$ of $G_0$ and some index $i \in \{1, 2\}$. By construction, $v_i^C$ appears in exactly one bag of $\mathcal{B}_1$.

        Otherwise, $S \subseteq V(G_0)$. Since each bag in $\mathcal{B}_0$ has size at most $k$, and no two bags are identical, there is at most one bag in $\mathcal{B}_0$ containing $S$. Consequently, there is at most one bag in $\{ B_{x} \in \mathcal{B}_1 : x \in V(T_0)  \}$ containing $S$. We claim that there is at most one bag in $\{ B_{x} \in \mathcal{B}_1 : x \in V(T_1) \setminus V(T_0) \}$ containing $S$. Suppose $S \subseteq B_{x'}$ and $S \subseteq B_{x''}$, where $x', x'' \in V(T_1) \setminus V(T_0)$ and $x' \neq x''$. By construction, $B_{x'} = C' \cup \{v_2^{C'}\}$ and $B_{x''} = C'' \cup \{v_2^{C''}\}$ for some distinct $k$-cliques $C'$ and $C''$ of $G_0$. Let $w \in C' \setminus C''$. Then $w, v_2^{C'} \in B_{x'} \setminus B_{x''}$, and because $|B_{x'}| = k+1$, $|B_{x'} \cap B_{x''}| \leq k-1$. This contradicts the assumption that $S \subseteq B_{x'} \cap B_{x''}$ and $\left|S \right|=k$.

        Finally, we claim that $\stw(G) \leq k$. We modify $(T_1, \mathcal{B}_1)$ to construct a tree-decomposition $(T, \mathcal{B})$ of $G$. Each $(k-1)$-clique $C'$ of $G_1$ is contained in some bag $B_x \in \mathcal{B}_1$ (with $x \in V(T_1)$). We modify $T_1$ by adding leaf vertices $x_1, \dots, x_{(k-1)^2 + 1}$ adjacent to $x$. For each $i \in \{1, \dots, (k-1)^2 + 1\}$, create a new bag $B_{x_i}:= C' \cup \{v_{i}^{C'}\}$, and note that $|B_{x_i}| \leq k$. Let $T$ be the resulting tree, and let $\mathcal{B}$ be the collection of all bags. Then $(T, \mathcal{B})$ is a tree-decomposition of $G$ in which each bag of $\mathcal{B}$ has size at most $k+1$. Therefore, $\tw(G) \leq k$. Since each new $k$-clique is in exactly one bag, and because $\stw(G_1) \leq k$, we have $\stw(G) \leq k$.
    \end{proof}
    Suppose for contradiction that there are graphs $H_1$ and $H_2$ such that $\omega(H_1), \omega(H_2) \leq k -1$, $G \subsetsim H_1 \boxtimes H_2$, and each clique of $G$ is $(p,q)$-projected with $p + q \leq k +2$. We say that a clique $C$ of $G$ is \defn{full} if it is $(p,q)$-projected with $p + q = k + 2$. In this language, \cref{lem:lnw} implies that there is a full $k$-clique $\mathdefn{C}$ of $G_0$.

    \begin{claim}
        \label{claim:k-1full}
        No $(k-1)$-clique of $G_1$ is full.
    \end{claim}
    \begin{proof}
        Suppose to the contrary that $C'$ is a $(p,q)$-projected $(k-1)$-clique of $G_1$ with $p+q = k+2$. Since $p, q \leq k -1$, there are $(k-1)^2 + 1 \geq pq + 1$ vertices attached to $C'$ in $V(G) \setminus V(G_1)$. At least one of the vertices $v \in V(G) \setminus V(G_1)$ is embedded outside $P_1(C') \times P_2(C')$. Then $C' + v$ is either $(p+1, q)$-projected, $(p, q+1)$-projected or $(p+1, q+1)$-projected, a contradiction.
    \end{proof}

    \begin{claim}
        \label{claim:k_red_vertex}
        No full $k$-clique $C$ of $G_1$ contains a $C$-redundant vertex.
    \end{claim}
    \begin{proof}
        Suppose $C$ is a full $k$-clique of $G_1$, and $v \in C$ is $C$-redundant. Then $|P_1(C - v)| = |P_1(C)|$ and $|P_2(C - v)| = |P_2(C)|$. So $C - v$ is a full $(k-1)$-clique in $G_1$, contradicting \cref{claim:k-1full}.
    \end{proof}
    By an \defn{ordering} of $C$ we mean a labelling of the vertices of $C$ with $\{v_1, \dots, v_k\}$. Fix an ordering of $C$.
    For each $i \in \{1, \dots, k\}$, let $Q_i:=\{v_1, \dots, v_i\}$. Let $p_i, q_i \geq 1$ be integers such that $Q_i$ is $(p_i, q_i)$-projected. Note that $p_i$ and $q_i$ depend on the ordering of $C$, but we always have that $p_1 =  q_1 = 1$, $Q_k = C$, $p_k = p$, and $q_k = q$.

    Suppose $(p_i, q_i) = (p_{i+1}, q_{i+1})$ for some index $i \in \{1, \dots, k-1\}$. Then $v_{i+1}$ is both $(C,P_1)$-redundant and $(C,P_2)$-redundant. This contradicts \cref{claim:k_red_vertex}.
    Hence, for each $i \in \{1, \dots, k-1\}$, either $p_{i+1} + q_{i+1} = p_i + q_i + 1$ or $p_{i+1} + q_{i+1} = p_i + q_i + 2$. In the latter case, we refer to $i+1 \in \{ 2, \dots k \}$ as a \defn{jump index}. Since $p + q = k + 2$, there is exactly one jump index $\mathdefn{j} \in \{2, \dots, k\}$. Therefore:
    \begin{claim}
        \label{claim:one_jump_index}
        Any ordering of $C$ has exactly one jump index.\qedhere
    \end{claim}
    Observe that $P_i(v_1) \neq P_i(v_j)$ for each $i \in \{ 1, 2 \}$. Let $Q':= \{v_2, \dots, v_k\} \setminus \{v_j\}$. Since $k \geq 3$, $Q' \neq \varnothing$. Define:
    \begin{align*}
        E_1:= \{ x \in Q' : P_1(v_1) = P_1(x) \}, \\
        E_2:= \{ x \in Q' : P_2(v_1) = P_2(x) \}, \\
        E_3:= \{ x \in Q' : P_1(v_j) = P_1(x) \}, \\
        E_4:= \{ x \in Q' : P_2(v_j) = P_2(x) \}.
    \end{align*}
    If $E_1 \neq \varnothing$ and $E_2 \neq \varnothing$, then $v_1$ is $C$-redundant, contradicting \cref{claim:k_red_vertex}. Therefore, $E_1$ or $E_2$ is empty.
    Similarly, $E_3$ or $E_4$ is empty. If $x \in E_1 \cap E_4$ or $x \in E_2 \cap E_3$, then $x$ is $C$-redundant, contradicting \cref{claim:k_red_vertex}. Hence, $E_1 \cap E_4  = E_2 \cap E_3 = \varnothing$. Since $P_i(v_1) \neq P_i(v_j)$ for each $i \in \{ 1, 2 \}$, $E_1 \cap E_3 = E_2 \cap E_4 = \varnothing$.

    \begin{claim}
        \label{claim:is_a_partition}
        Each vertex in $Q'$ shares a coordinate with $v_1$ or $v_j$, implying that $E_1 \cup E_2 \cup E_3 \cup E_4 = Q'$.
    \end{claim}
    \begin{proof}
        Suppose $v_z \in Q'$ does not share a coordinate with both $v_1$ and $v_j$. Then we may reorder $C$ so that the vertices $\{ v_1, v_j, v_z \}$ are the first three vertices in the new ordering, which has two jump indices. This contradicts \cref{claim:one_jump_index}.
    \end{proof}
    \begin{claim}
        \label{claim:disjoint_subclaim}
        Either:
        \begin{enumerate}[label=\textbf{(C\arabic*):},leftmargin=4em]
            \item $E_1 \cup E_4 = Q'$, $E_1 \cap E_4 = \varnothing$, and $E_2, E_3 = \varnothing$, or
            \item $E_2 \cup E_3 = Q'$, $E_2 \cap E_3 = \varnothing$, and $E_1, E_4 = \varnothing$.
        \end{enumerate}
    \end{claim}
    \begin{proof}
        By \cref{claim:is_a_partition}, $E_1 \cup E_4 \neq \varnothing$ or $E_2 \cup E_3 \neq \varnothing$. Suppose $E_1 \neq \varnothing$. Since $E_1$ or $E_2$ is empty, $E_2 = \varnothing$. Assume for contradiction that $E_3 \neq \varnothing$. Since $E_3$ or $E_4$ is empty, $E_4 = \varnothing$. Thus, $E_1 \cup E_3 = Q'$, and $E_1 \cap E_3 = \varnothing$. By \cref{claim:k_red_vertex}, since no vertex of $C$ is $C$-redundant, no vertex in $E_1$ shares the same $H_2$-projection as a vertex in $E_3$. This forces $|P_2(C)| = k$, a contradiction to the assumption that $\omega(H_2) \leq k - 1$. Therefore, $E_3 = \varnothing$. Then $E_1 \cup E_4 = Q'$, $E_1 \cap E_4 = \varnothing$, and we deduce (C1). If $E_4 \neq \varnothing$, the same argument concludes (C1). If $E_2 \neq \varnothing$ or $E_3 \neq \varnothing$, a symmetric argument concludes (C2).
    \end{proof}
    Without loss of generality, suppose case (C1) of \cref{claim:disjoint_subclaim} occurs. The assumption that $\omega(H_i) \leq k -1$ and $P_i(v_1) \neq P_i(v_j)$ for each $i \in \{ 1, 2 \}$ implies that $\left|E_1\right|, \left|E_4\right| \leq k -3$. Since $E_1 \cup E_4 = Q'$ and $E_1 \cap E_4 = \varnothing$, we have $|E_1| ,|E_4| \geq 1$.

    Let $x:=P_1(v_1) \in V(H_1)$ and $y:=P_2(v_j) \in V(H_2)$. \cref{claim:disjoint_subclaim} implies that for some $w_1, \dots, w_{p-1} \in V(H_1) -x$ and $u_1, \dots, u_{q-1} \in V(H_2) - y$, the vertices of $C$ have coordinates $(x, u_1), \dots, (x, u_{q-1}), (w_1, y), \dots, (w_{p-1}, y)$. Let $S_x:= \{(x, u_1), \dots, (x, u_{q-1})\}$ and $S_y := \{(w_1, y), \dots, (w_{p-1}, y)\}$. Observe that $S_x = E_1 \cup \{v_1\}$, $S_y = E_4 \cup \{v_j\}$, $|S_x| = q-1$, $|S_y| = p-1$, $C = S_x \cup S_y$, $S_x \cap S_y = \varnothing$, and $|S_x|, |S_y| \geq 2$.

    By construction of $G_1$, there are two vertices attached to $C$. One of these vertices, $z$, is not embedded at $(x,y)$. However, as $C$ is full, $z$ is embedded in $P_1(C) \times P_2(C)$. Suppose $z$ is embedded at $(w_i, u_j)$ for some $i \in \{1, \dots, p-1\}$ and $j \in \{1, \dots, q-1\}$. Let $C':= (C \cup (w_i, u_j) )\setminus \{ (x,u_j), (w_i, y)\}$, and note that $C'$ is a $(k-1)$-clique of $G_1$.
    \begin{claim}
        \label{claim:C'_full}
        $C'$ is full.
    \end{claim}
    \begin{proof}
        Since $|S_x| \geq 2$, $S_x \setminus \{(x, u_j)\}$ is non-empty. So $x \in P_1(C')$ and $|P_1(C')| = |P_1(C)| = p$. Similarly, since $|S_y| \geq 2$, we have $|P_2(C')| = |P_2(C)| = q$. Therefore, $C'$ is full.
    \end{proof}
    \cref{claim:C'_full} contradicts \cref{claim:k-1full}. We have thus arrived at a contradiction.
\end{proof}

\restatablesplbstw*

\begin{proof}
    Since $p, q \geq 1$ and $p,q \leq k -2$, $k \geq 3$. Thus, the hypothesis of \cref{prop:stw_projections} is satisfied. Let $G$ be the simple treewidth $k$ graph  given by \cref{prop:stw_projections}. Assume for contradiction that there are graphs $H_1$ and $H_2$ with $\tw(H_1) \leq p$ and $\tw(H_2) \leq q$ such that $G \subsetsim H_1 \boxtimes H_2$. Since $p, q \leq k- 2$, $\omega(H_1) \leq \tw(H_1) + 1 \leq p + 1 \leq k -1$ and $\omega(H_2) \leq \tw(H_2) + 1 \leq q +1 \leq k -1$. \cref{prop:stw_projections} implies that there is a $(p', q')$-projected clique of $G$ with $p' + q' \geq k + 3$. By \cref{obs:induced_tourn}, $\omega(H_1) \geq p'$ and $\omega(H_2) \geq q'$, so $\omega(H_1) + \omega(H_2) \geq p' + q' \geq k + 3$. On the other hand, the assumption of \cref{thm:sp_lb_stw} implies that $\omega(H_1) + \omega(H_2) \leq p + 1 + q + 1 \leq k + 2$. This is a contradiction.
\end{proof}

\section{A Conjecture about Planar Graphs}

We conclude with a conjecture. Recall that \cref{thm:Planar222} says that every planar graph is contained in $H_1 \StrongProd H_2 \StrongProd K_2$ for some graphs $H_1$ and $H_2$ with $\tw(H_1)\leq 2$ and $\tw(H_2)\leq 2$. The product in \cref{thm:Planar222} is dense, while planar graphs are sparse. So it would be interesting to prove the following qualitative strengthening, where the product is sparse.

\begin{conj}
    \label{conj:planar}
    There exists $c,d \geq 1$ such that every planar graph $G$ is contained in $(\vec H_1 \, \squareslash \, \vec H_2) \boxtimes K_c$, for some digraphs $\vec H_1$ and $\vec H_2$ with  $\tw(H_i)\leq 2$ and $\indeg(\vec H_i)\leq d$ for each $i \in \{ 1, 2 \}$.
\end{conj}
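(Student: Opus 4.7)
The most natural attack on \cref{conj:planar} combines the Planar Graph Product Structure Theorem (\cref{thm:PGPST}) with \cref{thm:apollonian}. By \cref{thm:PGPST}, every planar graph $G$ is contained in $H \boxtimes P \boxtimes K_3$ for some planar graph $H$ with $\tw(H)\le 3$ and some path $P$; applying \cref{thm:apollonian} to $H$ yields oriented digraphs $\vec H_1,\vec H_2$ with $\tw(H_i),\indeg(\vec H_i)\le 2$ such that $H\subsetsim \vec H_1\,\squareslash\,\vec H_2$. Combining, $G\subsetsim (\vec H_1\,\squareslash\,\vec H_2)\boxtimes P\boxtimes K_3$, and the remaining task is to absorb the factor $P\boxtimes K_3$ into the directed product while keeping both directed factors of treewidth at most $2$ and indegrees bounded.

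The naive attempt of folding $P$ into one of the factors, for instance replacing $\vec H_2$ by $\vec H_2\,\squareslash\,\vec P$ for $\vec P$ a directed orientation of $P$, fails because the underlying graph $H_2\boxtimes P$ can have treewidth around $5$, exceeding the target of $2$. A more refined approach is to exploit the sparseness of $G$: keep $\vec F_1:=\vec H_1$ and $\vec F_2:=\vec H_2$ unchanged, and construct an embedding of $G$ into $(\vec F_1\,\squareslash\,\vec F_2)\boxtimes K_c$ by assigning each $g\in V(G)$ to one of $c$ copies in $K_c$, where the chosen copy is a function of the $P$-layer and $K_3$-coordinate of $g$ in the PGPST embedding. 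For edges of $G$ inside a single $P$-layer this is straightforward; the difficulty is with edges crossing consecutive $P$-layers, which correspond to ``diagonal'' arcs where the orientations of $\vec H_i$ and $\vec P$ must agree.

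The central obstacle I anticipate is orientation alignment: in a directed product a diagonal edge requires both factor orientations to agree, while a strong product realises all diagonals. The $K_c$ factor must therefore provide enough ``phases'' of each vertex of $V(H_1)\times V(H_2)$ to cover every orientation pattern that the edges of $G$ can exhibit when traversing $P$-layers. Because $G$ is sparse and each vertex belongs to a single $P$-layer, I expect a small constant $c$ (absorbing the $K_3$ factor together with a handful of extra copies for orientation alignment) and $d=2$ to suffice. Making this precise will require a careful analysis of how the Apollonian-network construction used to prove \cref{thm:apollonian} interacts with the layered structure of the PGPST, which is the main technical content I would have to work out.
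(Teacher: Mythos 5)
This is stated in the paper as a \emph{conjecture}: the authors give no proof, so there is nothing to compare your proposal against on the paper's side. What you have written is, by your own admission, a sketch of an attack rather than a proof, and I do not think the attack as outlined can be pushed through.

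The decisive gap is a counting obstruction you did not flag. You propose to keep $\vec F_1:=\vec H_1$ and $\vec F_2:=\vec H_2$ unchanged from the application of \cref{thm:apollonian} to $H$, and to absorb the $P\boxtimes K_3$ factor into a single $K_c$ by choosing, for each $g\in V(G)$, a copy in $K_c$ depending on $g$'s $P$-layer and $K_3$-coordinate. But then the host graph $(\vec F_1\,\squareslash\,\vec F_2)\boxtimes K_c$ has exactly $|V(H_1)|\cdot|V(H_2)|\cdot c$ vertices, a quantity independent of $|V(P)|$. The PGPST embedding of $G$ can use $\Theta(|V(P)|)$ distinct $P$-layers and can place $\Theta(|V(H)|)$ vertices of $G$ in each, and \cref{thm:apollonian} only guarantees $|V(H_1)|\cdot|V(H_2)|\geq|V(H)|$, not anything proportional to $|V(G)|$. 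For a planar $G$ with, say, $|V(P)|\approx\sqrt{n}$ and $|V(H)|\approx\sqrt{n}$, the target product has only $O(c\sqrt{n})$ vertices while $G$ has $n$, so no injective embedding exists regardless of how cleverly the $K_c$-phase is chosen. Any viable version of this strategy must therefore \emph{grow} the directed factors to encode the $P$-layer information, which is exactly where the treewidth-blowup problem you noted (passing to $\vec H_2\,\squareslash\,\vec P$ or similar) re-enters, and which remains unresolved. So the central difficulty is not merely ``orientation alignment'' absorbed by a constant $K_c$; it is that the factors themselves must be redesigned to have enough vertices while retaining treewidth $\leq 2$ and bounded indegree, and that is open.
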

There is some evidence to support \cref{conj:planar}:
\begin{itemize}
    \item Outerplanar graphs are contained in directed projects of trees with indegree $1$ (\cref{thm:outerplanar}).
    \item \cref{conj:planar} holds for planar graphs with treewidth $3$ (\cref{thm:apollonian}), and many counterexamples in graph product structure theory are planar with treewidth $3$ \citep{distel2024treewidth, DJMMUW20}.
\end{itemize}

\subsubsection*{Acknowledgements}
Thanks to Nikolai Karol for helpful conversations about graph products.

    {
        \fontsize{10pt}{11pt}
        \selectfont
        \bibliographystyle{DavidNatbibStyle}
        \bibliography{ref}

\def\soft#1{\leavevmode\setbox0=\hbox{h}\dimen7=\ht0\advance \dimen7 by-1ex\relax\if t#1\relax\rlap{\raise.6\dimen7 \hbox{\kern.3ex\char'47}}#1\relax\else\if T#1\relax \rlap{\raise.5\dimen7\hbox{\kern1.3ex\char'47}}#1\relax \else\if d#1\relax\rlap{\raise.5\dimen7\hbox{\kern.9ex \char'47}}#1\relax\else\if D#1\relax\rlap{\raise.5\dimen7 \hbox{\kern1.4ex\char'47}}#1\relax\else\if l#1\relax \rlap{\raise.5\dimen7\hbox{\kern.4ex\char'47}}#1\relax \else\if L#1\relax\rlap{\raise.5\dimen7\hbox{\kern.7ex \char'47}}#1\relax\else\message{accent \string\soft \space #1 not defined!}#1\relax\fi\fi\fi\fi\fi\fi}
\begin{thebibliography}{28}
\providecommand{\natexlab}[1]{#1}
\providecommand{\msn}[1]{MR:\,\href{http://www.ams.org/mathscinet-getitem?mr=MR{#1}}{#1}}
\providecommand{\ZBL}[1]{Zbl:\,\href{https://www.zentralblatt-math.org/zmath/en/search/?q=an:#1}{#1}}
\providecommand{\url}[1]{\texttt{#1}}
\providecommand{\urlprefix}{}
\expandafter\ifx\csname urlstyle\endcsname\relax
  \providecommand{\doi}[1]{doi:\discretionary{}{}{}#1}\else
  \providecommand{\doi}{doi:\discretionary{}{}{}\begingroup \urlstyle{rm}\Url}\fi

\bibitem[{Arnborg and Proskurowski(1986)}]{AP-SJADM96}
\textsc{Stefan Arnborg and Andrzej Proskurowski}.
\newblock \href{https://doi.org/10.1137/0607033}{Characterization and recognition of partial $3$-trees}.
\newblock \emph{SIAM J. Algebraic Discrete Methods}, 7(2):305--314, 1986.

\bibitem[{Bang-Jensen and Gutin(2018)}]{bang2018classes}
\textsc{J{\o}rgen Bang-Jensen and Gregory Gutin}.
\newblock \href{https://link.springer.com/book/10.1007/978-3-319-71840-8}{Classes of directed graphs}, vol.~11.
\newblock Springer, 2018.

\bibitem[{Bodlaender(1998)}]{Bodlaender98}
\textsc{Hans~L. Bodlaender}.
\newblock \href{https://doi.org/10.1016/S0304-3975(97)00228-4}{A partial $k$-arboretum of graphs with bounded treewidth}.
\newblock \emph{Theoret. Comput. Sci.}, 209(1-2):1--45, 1998.

\bibitem[{Bonamy et~al.(2022)Bonamy, Gavoille, and Pilipczuk}]{BGP22}
\textsc{Marthe Bonamy, Cyril Gavoille, and Micha{\l} Pilipczuk}.
\newblock \href{https://doi.org/10.1137/20M1330464}{Shorter labeling schemes for planar graphs}.
\newblock \emph{SIAM J. Discrete Math.}, 36(3):2082--2099, 2022.

\bibitem[{Bonnet et~al.(2022)Bonnet, Kwon, and Wood}]{BKW}
\textsc{\'Edouard Bonnet, {O-joung} Kwon, and David~R. Wood}.
\newblock \href{https://arxiv.org/abs/2202.11858}{Reduced bandwidth: a qualitative strengthening of twin-width in minor-closed classes (and beyond)}.
\newblock 2022, arXiv:2202.11858.

\bibitem[{Chen(2016)}]{Chen16}
\textsc{Hao Chen}.
\newblock \href{https://doi.org/10.1007/s00454-016-9777-3}{Apollonian ball packings and stacked polytopes}.
\newblock \emph{Discrete Comput. Geom.}, 55(4):801--826, 2016.

\bibitem[{Diestel(2005)}]{Diestel05}
\textsc{Reinhard Diestel}.
\newblock \href{http://diestel-graph-theory.com/}{Graph theory}, vol. 173 of \emph{Graduate Texts in Mathematics}.
\newblock Springer, 3rd edn., 2005.

\bibitem[{Ding et~al.(1998)Ding, Oporowski, Sanders, and Vertigan}]{DOSV98}
\textsc{Guoli Ding, Bogdan Oporowski, Daniel~P. Sanders, and Dirk Vertigan}.
\newblock \href{https://doi.org/10.1007/s004930050001}{Partitioning graphs of bounded tree-width}.
\newblock \emph{Combinatorica}, 18(1):1--12, 1998.

\bibitem[{Distel et~al.(2025)Distel, Hendrey, Karol, Wood, and Yip}]{distel2024treewidth}
\textsc{Marc Distel, Kevin Hendrey, Nikolai Karol, David~R. Wood, and Jung~Hon Yip}.
\newblock \href{http://dx.doi.org/10.46298/dmtcs.14785}{Treewidth 2 in the planar graph product structure theorem}.
\newblock \emph{Discrete Mathematics \& Theoretical Computer Science}, 27:2:8, 2025.

\bibitem[{D\k{e}bski et~al.(2021)D\k{e}bski, Felsner, Micek, and Schr\"{o}der}]{DFMS21}
\textsc{Micha{\l} D\k{e}bski, Stefan Felsner, Piotr Micek, and Felix Schr\"{o}der}.
\newblock \href{https://doi.org/10.19086/aic.27351}{Improved bounds for centered colorings}.
\newblock \emph{Adv. Comb.}, \#8, 2021.

\bibitem[{Dujmovi\'c et~al.(2021)Dujmovi\'c, Esperet, Gavoille, Joret, Micek, and Morin}]{DEGJMM21}
\textsc{Vida Dujmovi\'c, Louis Esperet, Cyril Gavoille, Gwena\"el Joret, Piotr Micek, and Pat Morin}.
\newblock \href{https://doi.org/10.1145/3477542}{Adjacency labelling for planar graphs (and beyond)}.
\newblock \emph{J. ACM}, 68(6):42, 2021.

\bibitem[{Dujmovi{\'c} et~al.(2020{\natexlab{a}})Dujmovi{\'c}, Esperet, Joret, Walczak, and Wood}]{DEJWW20}
\textsc{Vida Dujmovi{\'c}, Louis Esperet, Gwena\"{e}l Joret, Bartosz Walczak, and David~R. Wood}.
\newblock \href{https://doi.org/10.19086/aic.12100}{Planar graphs have bounded nonrepetitive chromatic number}.
\newblock \emph{Adv. Comb.}, \#5, 2020{\natexlab{a}}.

\bibitem[{Dujmovi{\'c} et~al.(2020{\natexlab{b}})Dujmovi{\'c}, Joret, Micek, Morin, Ueckerdt, and Wood}]{DJMMUW20}
\textsc{Vida Dujmovi{\'c}, Gwena\"{e}l Joret, Piotr Micek, Pat Morin, Torsten Ueckerdt, and David~R. Wood}.
\newblock \href{https://doi.org/10.1145/3385731}{Planar graphs have bounded queue-number}.
\newblock \emph{J. ACM}, 67(4):\#22, 2020{\natexlab{b}}.

\bibitem[{Dvor{\'{a}}k et~al.(2022)Dvor{\'{a}}k, Gon{\c{c}}alves, Lahiri, Tan, and Ueckerdt}]{DGLTU22}
\textsc{Zdenek Dvor{\'{a}}k, Daniel Gon{\c{c}}alves, Abhiruk Lahiri, Jane Tan, and Torsten Ueckerdt}.
\newblock \href{https://doi.org/10.4230/LIPIcs.SoCG.2022.38}{On comparable box dimension}.
\newblock In \textsc{Xavier Goaoc and Michael Kerber}, eds., \emph{Proc. 38th Int'l Symp. on Computat. Geometry \textup{(SoCG 2022)}}, vol. 224 of \emph{LIPIcs}, pp. 38:1--38:14. Schloss Dagstuhl, 2022.

\bibitem[{Esperet et~al.(2023)Esperet, Joret, and Morin}]{EJM23}
\textsc{Louis Esperet, Gwena\"{e}l Joret, and Pat Morin}.
\newblock \href{https://doi.org/10.1112/jlms.12781}{Sparse universal graphs for planarity}.
\newblock \emph{J. London Math. Soc.}, 108(4):1333--1357, 2023.

\bibitem[{Frieze and Tsourakakis(2014)}]{FT14}
\textsc{Alan Frieze and Charalampos~E. Tsourakakis}.
\newblock \href{https://doi.org/10.1080/15427951.2013.796300}{Some properties of random {A}pollonian networks}.
\newblock \emph{Internet Math.}, 10(1-2):162--187, 2014.

\bibitem[{Gawrychowski and Janczewski(2022)}]{GJ22}
\textsc{Pawel Gawrychowski and Wojciech Janczewski}.
\newblock \href{https://doi.org/10.1137/1.9781611977066.3}{Simpler adjacency labeling for planar graphs with {B}-trees}.
\newblock In \textsc{Karl Bringmann and Timothy~M. Chan}, eds., \emph{Proc. 5th Symposium on Simplicity in Algorithms \textup{(SOSA@SODA 2022)}}, pp. 24--36. {SIAM}, 2022.

\bibitem[{Harvey and Wood(2017)}]{HW17}
\textsc{Daniel~J. Harvey and David~R. Wood}.
\newblock \href{https://doi.org/10.1002/jgt.22030}{Parameters tied to treewidth}.
\newblock \emph{J. Graph Theory}, 84(4):364--385, 2017.

\bibitem[{Huynh et~al.(2021)Huynh, Mohar, {\v{S}}{\'a}mal, Thomassen, and Wood}]{HMSTW}
\textsc{Tony Huynh, Bojan Mohar, Robert {\v{S}}{\'a}mal, Carsten Thomassen, and David~R. Wood}.
\newblock \href{https://arxiv.org/abs/2109.00327}{Universality in minor-closed graph classes}.
\newblock 2021, arXiv:2109.00327.

\bibitem[{Jacob and Pilipczuk(2022)}]{JP22}
\textsc{Hugo Jacob and Marcin Pilipczuk}.
\newblock \href{https://doi.org/10.1007/978-3-031-15914-5\_21}{Bounding twin-width for bounded-treewidth graphs, planar graphs, and bipartite graphs}.
\newblock In \textsc{Michael~A. Bekos and Michael Kaufmann}, eds., \emph{Proc. 48th International Workshop on Graph-Theoretic Concepts in Computer Science \textup{({WG} 2022})}, vol. 13453 of \emph{Lecture Notes in Comput. Sci.}, pp. 287--299. Springer, 2022.

\bibitem[{Johnson et~al.(2001)Johnson, Robertson, Seymour, and Thomas}]{JRST-JCTB01}
\textsc{Thor Johnson, Neil Robertson, Paul~D. Seymour, and Robin Thomas}.
\newblock \href{https://doi.org/10.1006/jctb.2000.2031}{Directed tree-width}.
\newblock \emph{J. Combin. Theory Ser. B}, 82(1):138--154, 2001.

\bibitem[{Knauer and Ueckerdt(2012)}]{KU12}
\textsc{Kolja Knauer and Torsten Ueckerdt}.
\newblock \href{https://kam.mff.cuni.cz/workshops/mcw/work18/mcw2012booklet.pdf}{Simple treewidth}.
\newblock In \textsc{Pavel Ryt\'ir}, ed., \emph{Midsummer Combinatorial Workshop Prague}. 2012.

\bibitem[{Kratochv{\'{\i}}l and Vaner(2012)}]{KV12}
\textsc{Jan Kratochv{\'{\i}}l and Michal Vaner}.
\newblock \href{http://arxiv.org/abs/1210.8113}{A note on planar partial 3-trees}.
\newblock arXiv:1210.8113, 2012.

\bibitem[{Kr\'{a}\v{l} et~al.(2024)Kr\'{a}\v{l}, Pek\'{a}rkov\'{a}, and \v{S}torgel}]{KPS24}
\textsc{Daniel Kr\'{a}\v{l}, Krist\'{y}na Pek\'{a}rkov\'{a}, and Kenny \v{S}torgel}.
\newblock \href{https://drops.dagstuhl.de/entities/document/10.4230/LIPIcs.MFCS.2024.66}{Twin-width of graphs on surfaces}.
\newblock In \textsc{Rastislav Kr\'{a}lovi\v{c} and Anton{\'\i}n Ku\v{c}era}, eds., \emph{Proc. 49th Int'l Symposium on Mathematical Foundations of Computer Science \textup{(MFCS 2024)}}, vol. 306 of \emph{LIPIcs}, pp. 66:1--66:15. Schloss Dagstuhl, 2024.

\bibitem[{Liu et~al.(2024)Liu, Norin, and Wood}]{LNW}
\textsc{Chun-Hung Liu, Sergey Norin, and David~R. Wood}.
\newblock \href{https://arxiv.org/abs/2410.20333}{Product structure and tree decompositions}.
\newblock 2024, arXiv:2410.20333.

\bibitem[{Markenzon et~al.(2006)Markenzon, Justel, and Paciornik}]{MJP06}
\textsc{Lilian Markenzon, Claudia~Marcela Justel, and N.~Paciornik}.
\newblock \href{https://doi.org/10.1016/j.dam.2005.05.021}{Subclasses of {$k$}-trees: characterization and recognition}.
\newblock \emph{Discrete Appl. Math.}, 154(5):818--825, 2006.

\bibitem[{Reed(1997)}]{Reed97}
\textsc{Bruce~A. Reed}.
\newblock \href{https://doi.org/10.1017/CBO9780511662119.006}{Tree width and tangles: a new connectivity measure and some applications}.
\newblock In \textsc{R.~A. Bailey}, ed., \emph{Surveys in Combinatorics}, vol. 241 of \emph{London Math. Soc. Lecture Note Ser.}, pp. 87--162. Cambridge Univ. Press, 1997.

\bibitem[{Wulf(2016)}]{Wulf16}
\textsc{Lasse Wulf}.
\newblock \href{https://i11www.iti.kit.edu/_media/teaching/theses/ba-wulf-16.pdf}{Stacked treewidth and the {C}olin de {V}erdi\'ere number}.
\newblock 2016.
\newblock Bachelorthesis, Institute of Theoretical Computer Science, Karlsruhe Institute of Technology.

\end{thebibliography}
    }

\appendix

\section{Proof of \cref{lem:dpui_ub}}
\customlabel{appendix:A}{Appendix A}

\citet{DOSV98} proved that:
\begin{lem}
    \label{lem:partitions}
    For any integers $p, q,k \geq 1$ with $k = p + q + 1$, every graph of treewidth $k$ has a vertex partition into two induced subgraphs with treewidth at most $p$ and $q$ respectively.
\end{lem}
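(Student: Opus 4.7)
The plan is to prove the lemma by induction on $|V(G)|$, after first reducing to the case where $G$ is a $k$-tree. By \cref{lem:k_tree}, any graph of treewidth at most $k$ is a subgraph of some $k$-tree $G^*$; any partition of $V(G^*)$ witnessing the conclusion for $G^*$ restricts to one for $V(G)$, since induced subgraphs have no larger treewidth. So we may assume $G$ is a $k$-tree.

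For the base case $G = K_{k+1}$, there are exactly $k+1 = p+q+2$ vertices, and any partition $V(G) = A \sqcup B$ with $|A| = p+1$ and $|B| = q+1$ gives $G[A] \cong K_{p+1}$ of treewidth $p$ and $G[B] \cong K_{q+1}$ of treewidth $q$.

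For the inductive step, the recursive definition of a $k$-tree supplies a simplicial vertex $v$ of degree $k$ such that $G' := G - v$ is a $k$-tree (or $K_{k+1}$) and $C := N_G(v)$ is a $k$-clique of $G'$. Applying the inductive hypothesis to $G'$ yields a partition $V(G') = A' \sqcup B'$ with $\tw(G'[A']) \leq p$ and $\tw(G'[B']) \leq q$. Set $a := |C \cap A'|$ and $b := |C \cap B'|$, so $a + b = k = p+q+1$. The key observation is that $a \leq p$ or $b \leq q$: otherwise $a + b \geq (p+1)+(q+1) > k$, a contradiction. In the first case I would set $A := A' \cup \{v\}$ and $B := B'$; since $C \cap A'$ is a clique in $G'[A']$ it lies in some bag of any width-$p$ tree-decomposition of $G'[A']$, and attaching a new leaf bag $(C \cap A') \cup \{v\}$ of size $a+1 \leq p+1$ yields a tree-decomposition of $G[A]$ of width at most $p$, while $G[B] = G'[B']$ is unchanged. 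The case $b \leq q$ is symmetric.

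I do not foresee a serious obstacle: the whole argument rests on the pigeonhole identity $a + b = p+q+1 \Rightarrow a \leq p \text{ or } b \leq q$, which matches the hypothesis $k = p+q+1$ exactly. The only small technical point is the standard fact that attaching a new vertex adjacent to a $d$-clique yields a graph of treewidth at most $\max(\text{old treewidth},\, d)$.
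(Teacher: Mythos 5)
The paper does not prove this lemma; it is stated as a known result cited to \citet{DOSV98}, so there is no paper-internal argument to compare against. Your proof is correct and is the standard inductive argument: reducing to $k$-trees via \cref{lem:k_tree}, inducting along the construction sequence of the $k$-tree, and using the pigeonhole observation that $|C \cap A'| + |C \cap B'| = k = p+q+1$ forces $|C \cap A'| \le p$ or $|C \cap B'| \le q$. The technical step of extending the width-$p$ tree-decomposition of $G'[A']$ by attaching a leaf bag $(C \cap A') \cup \{v\}$ to a bag containing the clique $C \cap A'$ is handled correctly (every clique lies in some bag, and the new bag has size at most $p+1$), as is the reduction from subgraphs of $k$-trees back to arbitrary treewidth-$k$ graphs via monotonicity of treewidth under subgraphs. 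No gaps.
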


Let \defn{$G^+$} be the graph obtained from $G$ by adding a dominant vertex adjacent to all existing vertices. The proof of \cref{lem:lnw} is similar to that given in \citep[Proposition 3.6]{LNW}.

\begin{lem}
    \label{lem:lnw}
    For any vertex partition $V_1, V_2$ of $V(G)$, one can orient the edges of $G[V_1]^+$ and $G[V_2]^+$ so that $G \subsetsim \overrightarrow{G[V_1]^+} \, \squareslash \, \overrightarrow{G[V_2]^+}$.
\end{lem}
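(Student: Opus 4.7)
The plan is to exhibit an explicit embedding that uses the two added dominant vertices to route all cross-edges between $V_1$ and $V_2$ as diagonal arcs, while handling intra-part edges on a single ``axis'' in each factor. Let $d_1$ denote the dominant vertex of $G[V_1]^+$ and $d_2$ the dominant vertex of $G[V_2]^+$. Define the map $\alpha: V(G) \to V(G[V_1]^+) \times V(G[V_2]^+)$ by
\[
\alpha(u) = (u, d_2) \text{ for } u \in V_1, \qquad \alpha(v) = (d_1, v) \text{ for } v \in V_2.
\]
Since $d_1 \notin V_1$ and $d_2 \notin V_2$, the two image sets are disjoint, so $\alpha$ is injective.

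Next I would specify the orientation. The only arcs that really need to be ``aligned'' are those incident to the dominant vertices, since these are what make the cross-edges work as diagonals. Orient every edge of $G[V_1]^+$ incident to $d_1$ as $\overrightarrow{u d_1}$ for $u \in V_1$, and every edge of $G[V_2]^+$ incident to $d_2$ as $\overrightarrow{d_2 v}$ for $v \in V_2$. Orient all remaining edges of $G[V_1]^+$ and $G[V_2]^+$ arbitrarily.

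Finally, I would verify that $\alpha$ is an embedding by checking the three possible types of edges $xy \in E(G)$. If $x, y \in V_1$, then $\alpha(x) = (x,d_2)$ and $\alpha(y) = (y,d_2)$ agree in the second coordinate, and $xy \in E(G[V_1]^+)$, so some arc between $\alpha(x)$ and $\alpha(y)$ lies in the directed product regardless of its orientation. The case $x, y \in V_2$ is symmetric. If $x \in V_1$ and $y \in V_2$, then $\alpha(x) = (x, d_2)$ and $\alpha(y) = (d_1, y)$ differ in both coordinates; by construction $\overrightarrow{x d_1} \in E(\overrightarrow{G[V_1]^+})$ and $\overrightarrow{d_2 y} \in E(\overrightarrow{G[V_2]^+})$, so the diagonal arc $\overrightarrow{(x, d_2)(d_1, y)}$ belongs to $\overrightarrow{G[V_1]^+ \, \squarediv \, G[V_2]^+}$, as required.

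There is no real obstacle here; the only insight needed is the choice of embedding (parking $V_1$ on the ``$d_2$-row'' and $V_2$ on the ``$d_1$-column'') together with the deliberate, consistent orientation of the dominant-vertex edges so that every cross-edge of $G$ is realised by a diagonal arc pointing from the $V_1$-side to the $V_2$-side of the product.
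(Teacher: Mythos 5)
Your proposal is correct and takes essentially the same approach as the paper: the identical embedding (placing $V_1$ on the row of the second factor's dominant vertex and $V_2$ on the column of the first factor's dominant vertex), with dominant-vertex edges oriented consistently so that every cross-edge becomes a diagonal arc. The only cosmetic difference is the direction convention: you point the diagonal arcs from the $V_1$-side to the $V_2$-side, while the paper points them the other way; both choices are equally valid.
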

\begin{proof}
    For each $i \in \{1, 2\}$, let the dominant vertex in $G[V_i]^+$ be $r_i$. Map each vertex $x \in V_1$ to $(x,r_2) \in V(G[V_1]^+) \times V(G[V_2]^+)$ and map each vertex $y \in V_2$ to $(r_1,y) \in V(G[V_1]^+) \times V(G[V_2]^+)$. For the graph $G[V_1]^+$, direct edges incident to $r_1$ away from $r_1$ and the rest of the edges arbitrarily. For the graph $G[V_2]^+$, direct edges incident to $r_2$ towards $r_2$ and the rest of the edges arbitrarily. Observe that for any $x \in V_1$ and $y \in V_2$, the edge $(r_1,y) (x, r_2)$ is in the directed product. This completes the proof.
\end{proof}

\restatabledpuiub*

\begin{proof}
    By \cref{lem:partitions}, one can partition $G$ into induced subgraphs $H_1'$ and $H_2'$ of treewidth at most $p'$ and $q'$ respectively, where $p' = p - 1$ and $q' = q - 1$. So $p' + q' \geq k -1$. Let $H_1:= H_1^+$ and $H_2:= H_2^+$, so $\tw(H_1) \leq p$ and $\tw(H_2) \leq q$. \cref{lem:lnw} implies  there is an orientation of the edges of $H_1$ and $H_2$ so that $G \subsetsim \vec H_1 \, \squareslash \, \vec H_2$.
\end{proof}

\section{Simple Treewidth Graphs}
\customlabel{appendix:B}{Appendix B}
This appendix presents a proof of \cref{lem:edge_max_stw_k}.

We say that $H$ is a \defn{spanning subgraph} of $G$ if $H \subseteq G$ and $V(H) = V(G)$. Let $G$ be a graph. A tree-decomposition $(T, \mathcal{B})$ of $G$ is \defn{normal} if for any two distinct bags $B_x, B_y \in \mathcal{B}$, we have $B_x \not\subseteq B_y$ and $B_y \not\subseteq B_x$.
For an integer $k \geq 1$, a tree-decomposition $(T, \mathcal{B})$ is \defn{$k$-smooth} if each bag $B_x \in \mathcal{B}$ has size $k+1$, and for each $xy \in E(T)$, $\left|B_x \cap B_y\right| = k$. If $S \subseteq V(G)$, then let $\mathdefn{\Phi_{(T, \mathcal{B})}(S)}:= \{ x \in V(T) : S \subseteq B_x \}$ denote the nodes whose bags contain $S$. Recall that a tree-decomposition $(T, \mathcal{B})$ of a graph $G$ is \defn{$k$-simple} if it has width at most $k$, and for each set $S \subseteq V(G)$ of $k$ vertices, $|\Phi_{(T, \mathcal{B})}(S)|\leq 2$.

The main goal of this section is to prove \cref{thm:simple_treewidth}, which strengthens \cref{lem:edge_max_stw_k}. Note that \cref{lem:edge_max_stw_k} is trivial if $\left|V(G)\right| \leq k$.
\begin{restatable}{thm}{thmstw}
    \label{thm:simple_treewidth}
    Let $k \geq 1$ be an integer and $G$ be a graph with $|V(G)| \geq k+1$. The following are equivalent:
    \begin{enumerate}[label=(\arabic*):,leftmargin=3em]
        \item $\stw(G) \leq k$; that is,  $G$ has a $k$-simple tree-decomposition.
        \item $G$ has a normal $k$-simple $k$-smooth tree-decomposition.
        \item $G$ is a spanning subgraph of a simple $k$-tree.
    \end{enumerate}
\end{restatable}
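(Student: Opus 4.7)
The plan is to establish the cycle $(3) \Rightarrow (2) \Rightarrow (1)$ together with $(1) \Rightarrow (3)$; the direction $(2) \Rightarrow (3)$ serves as a natural alternative route that will also be laid out. The easy step is $(2) \Rightarrow (1)$, which is immediate from the definition of $k$-simple.

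For $(3) \Rightarrow (2)$, I would mirror the simple $k$-tree construction of $G' \supseteq G$ to build a tree-decomposition of $G'$ (hence of $G$). The initial $K_{k+1}$ contributes the root bag; each subsequent vertex $v$ added adjacent to an existing $k$-clique $C$ contributes a new bag $C+v$, attached via a new tree edge to any existing node whose bag contains $C$. Every bag has size $k+1$, and the intersection with the parent is exactly $C$ of size $k$, so the decomposition is $k$-smooth and normal. For $k$-simplicity: any $k$-set $S$ lying in two or more bags must be a $k$-clique of $G'$; either $S$ lies inside the initial $K_{k+1}$ (giving the root bag plus at most one further bag, since the construction of $G'$ uses $S$ as a neighborhood at most once) or $S$ contains a later-added vertex, in which case $S$ lies in exactly one bag.

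For $(2) \Rightarrow (3)$, define $G'$ on $V(G)$ by making every bag a clique. I plan to show $G'$ is a simple $k$-tree by induction on $|V(T)|$. Picking a leaf $x$ with neighbor $y$, $k$-smoothness furnishes a unique $v \in B_x \setminus B_y$, and the connectivity of the subtree supporting $v$ forces $v$ to appear only in $B_x$; hence $N_{G'}(v) = B_x - v$ is a $k$-clique of $G'$. Removing $x$ from $T$ and $v$ from $G'$ preserves normality, smoothness, and $k$-simplicity, so by induction $G' - v$ is a simple $k$-tree, and $v$ can be added back with neighborhood $B_x - v$. The technical heart is showing the resulting construction is simple: if two added vertices $v_1, v_2$ had the same neighborhood $C$, then $C + v_1$ and $C + v_2$ are distinct bags, $k$-simplicity forces $\Phi_{(T, \mathcal{B})}(C) = \{x_{v_1}, x_{v_2}\}$, and the intersection property forces $x_{v_1} x_{v_2} \in E(T)$. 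A case analysis on the order in which leaves are processed then shows this collision can only occur when $T$ consists of precisely the single edge $x_{v_1} x_{v_2}$, in which case one of $v_1, v_2$ remains in the terminal bag and plays the role of part of the initial $K_{k+1}$, not of an added vertex.

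For $(1) \Rightarrow (3)$, I plan to induct on $|V(G)|$ with base case $|V(G)| = k+1$ (so $G \subseteq K_{k+1}$). After normalizing $(T, \mathcal{B})$ by contracting any edge $xy$ with $B_x \subseteq B_y$, pick a leaf $x$ with neighbor $y$ and a vertex $v \in B_x \setminus B_y$; the subtree property makes $v$ lie only in $B_x$, so $N_G(v) \subseteq B_x - v$. Apply induction to $G - v$ (which inherits a $k$-simple decomposition) to obtain a simple $k$-tree $G_1 \supseteq G - v$, and then extend $G_1$ by adding $v$ adjacent to a $k$-clique $C \supseteq N_G(v)$ that is unused in a chosen construction of $G_1$. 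The main obstacle is ensuring that such an unused $C$ exists in $G_1$, which I expect to handle by strengthening the induction hypothesis so that the construction of $G_1$ can be tailored to keep a designated $k$-clique (coming from $B_x - v$, completed inside $B_y$ if necessary) available and unused. Once this flexibility is secured, the recursive extension closes the proof and, combined with the established $(3) \Rightarrow (2) \Rightarrow (1)$ chain, yields the full equivalence.
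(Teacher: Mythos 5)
Your directions $(2)\Rightarrow(1)$, $(3)\Rightarrow(2)$, and $(2)\Rightarrow(3)$ are sound and essentially match the paper: the paper also builds a $k$-smooth decomposition bag-by-bag from the simple $k$-tree construction, and also converts a $k$-smooth, $k$-simple decomposition to a simple $k$-tree by processing nodes in a BFS order (your leaf-peeling induction is the same idea run backwards). One small remark on $(2)\Rightarrow(3)$: your worry about two peeled vertices sharing a neighbourhood $C$ cannot arise at all, not just in a degenerate case. If $C$ were the neighbourhood at two distinct peeling steps, then $C=B_x\cap B_y=B_{x'}\cap B_{y'}$ for two distinct tree edges, which places the $k$-set $C$ in at least three bags, violating $k$-simplicity. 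This is the clean way the paper closes that argument.

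The genuine gap is in $(1)\Rightarrow(3)$, and it is actually two gaps, only one of which you flag. First, after normalizing, picking a leaf $x$ with neighbour $y$ and \emph{some} $v\in B_x\setminus B_y$, deleting $B_x$ from the decomposition does not generally leave a valid tree-decomposition of $G-v$: any other vertex $w\in B_x\setminus B_y$ with $w\neq v$ appears only in $B_x$ (since $x$ is a leaf and $w\notin B_y$), so $w$ would be covered by no remaining bag. The step works only when $|B_x\setminus B_y|=1$, and normalization alone does not give this. Second is the obstacle you do flag: $N_G(v)$ (or a $k$-set extending it) needs to be a $k$-clique of the inductively-produced simple $k$-tree $G_1$ that is \emph{unused} in some construction of $G_1$, and the vanilla induction hypothesis gives you no control over which $k$-cliques $G_1$'s construction uses or even over which $k$-subsets of bags become cliques in $G_1$. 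You propose ``strengthening the induction hypothesis,'' but no concrete strengthening is given, and finding one that simultaneously fixes both gaps is exactly where the work lies. The paper sidesteps both by first proving $(1)\Rightarrow(2)$ directly: starting from a normal, $k$-fine, $k$-simple rooted decomposition, it maximizes a score $\sum_x(\operatorname{depth}(x)+1)|B_x|$ and shows (via a sequence of exchange lemmas) that a maximizer must have every bag of size $k+1$ and every adjacent pair of bags differing in exactly one vertex, i.e.\ it is $k$-smooth. That extremal argument is the real content missing from your sketch; once you have $(2)$, your $(2)\Rightarrow(3)$ finishes it.
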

\vspace{-0.5em}
\begin{lem}\label{lem:connected}
    Let $(T, \mathcal{B})$ be a tree-decomposition of a graph $G$. Then for all $S \subseteq V(G)$, the subtree of $T$ induced by $\Phi_{(T, \mathcal{B})}(S)$ is connected.
\end{lem}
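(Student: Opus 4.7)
The plan is to reduce the statement to the single-vertex case (which is built into the definition of a tree-decomposition) and then invoke the Helly property of subtrees of a tree.

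For each vertex $v \in S$, let $T_v := \{x \in V(T) : v \in B_x\}$. By the second axiom in the definition of a $T$-decomposition, $T_v$ induces a non-empty connected subtree of $T$. Observing that $\Phi_{(T,\mathcal{B})}(S) = \bigcap_{v \in S} T_v$, the lemma reduces to the claim that the intersection of a family of subtrees of $T$ is either empty or a subtree.

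I would prove this by induction on $|S|$ (handling $S = \varnothing$, where $\Phi(S) = V(T)$, trivially, and $|S|=1$ by the defining axiom). For the inductive step, it suffices to show that if $T_1$ and $T_2$ are two connected subtrees of $T$ with non-empty intersection, then $T_1 \cap T_2$ induces a connected subgraph of $T$. For this, pick any two nodes $x, y \in V(T_1) \cap V(T_2)$ and consider the unique $x$–$y$ path $P$ in $T$. Since $T_1$ is a connected subgraph of the tree $T$, the path $P$ must lie entirely in $T_1$; likewise $P$ lies in $T_2$. Hence $P \subseteq T_1 \cap T_2$, showing connectedness. Applying this to $\bigcap_{v \in S \setminus \{v_0\}} T_v$ and $T_{v_0}$ finishes the induction.

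There is no real obstacle here beyond stating things carefully: the argument is the standard proof that subtrees of a tree enjoy the Helly property. The only minor point to be careful about is the empty intersection case, which we interpret as the (vacuously connected) empty subgraph; this does not affect any application of the lemma elsewhere in the paper.
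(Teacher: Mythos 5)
Your proof is correct and rests on exactly the same observation as the paper's: for any two nodes $x,y$ whose bags contain $S$, each single-vertex subtree $\Phi_{(T,\mathcal{B})}(\{v\})$ with $v\in S$ contains both $x$ and $y$ and hence the unique $xy$-path, so that path lies in $\Phi_{(T,\mathcal{B})}(S)=\bigcap_{v\in S}\Phi_{(T,\mathcal{B})}(\{v\})$. The induction on $|S|$ and the Helly-property framing are harmless but unnecessary scaffolding, since the path argument already applies directly to the intersection of the whole family.
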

\begin{proof}
    Let $B_x$ and $B_y$ be distinct bags in $\mathcal{B}$ containing $S$. For each vertex $v \in S$, the subtree induced by $\Phi_{(T, \mathcal{B})}(\{v\})$ contains the unique $xy$-path $P$ in $T$. Then for each node $x \in V(P)$, we have $S \subseteq B_x$. Thus the subtree of $T$ induced by $\Phi_{(T, \mathcal{B})}(S)$ is connected.
\end{proof}
We show that $(T, \mathcal{B})$ is normal if and only if for each edge $xy \in E(T)$,  we have $B_x \not\subseteq B_y$ and $B_y \not\subseteq B_x$. Suppose for each edge $ab \in E(T)$,  we have $B_a \not\subseteq B_b$ and $B_b \not\subseteq B_a$, but $(T, \mathcal{B})$ is not normal. Then there are distinct bags $B_x$ and $B_y$ such that $B_x \subseteq B_y$. By \cref{lem:connected}, the subtree of $T$ induced by $\Phi_{(T, \mathcal{B})}(B_x)$ is connected, and contains the unique $xy$-path $P$ in $T$. Let $z \in V(P)$ be the vertex adjacent to $x$. Then $B_x \subseteq B_z$, a contradiction.

Let $r \in V(T)$. A \defn{tree-decomposition rooted at $r$}, denoted \defn{$(T, \mathcal{B}, r)$}, is a tree-decomposition $(T, \mathcal{B})$, where $T$ is rooted at $r$. A \defn{rooted tree-decomposition} is a tree-decomposition rooted at $r$ for some $r \in V(T)$.
The \defn{depth} of a vertex $x \in V(T)$, denoted by $\mathdefn{\depth(x)}$, is the distance from $x$ to $r$ in $T$. If $xy \in E(T)$ and $\depth(x) > \depth(y)$, then we say $y$ is the \defn{parent} of $x$ and $x$ is a \defn{child} of $y$. For $x \in V(T)$, define \defn{$T_x$} to be the subtree of $T$ induced by vertices $v$ whose $rv$-path in $T$ passes through $x$. Note that $x \in V(T_x)$.

For an integer $k \geq 1$, a rooted tree-decomposition $(T, \mathcal{B}, r)$ of $G$ is \defn{$k$-fine} if $|B_r| = k+1$.
Let \defn{$\mathcal{T}_k(G)$} denote the set of all rooted normal $k$-fine $k$-simple tree-decompositions of $G$. Note that $\mathcal{T}_k(G)$ may be empty, for instance if $\stw(G) \geq k+1$ or if $|V(G)| \leq k$.
Lastly, the \defn{score} of a rooted tree-decomposition $(T, \mathcal{B}, r)$ is:
\begin{equation*}
    \mathdefn{\score((T, \mathcal{B}, r))} := \sum_{x \in V(T)} (\depth(x) + 1) |B_x|.
\end{equation*}

\begin{lem}\label{lem:well_def}
    Let $k \geq 1$ be an integer. Suppose $G$ is a graph with $\left|V(G)\right| \geq k+1$ and $\stw(G) \leq k$. Then $\mathcal{T}_k(G)$ is non-empty, and the score of each rooted tree-decomposition $(T, \mathcal{B}, r) \in \mathcal{T}_k(G)$ is at most $(k+1) \cdot |V(G)| \cdot (|V(G)| + 1)$.
\end{lem}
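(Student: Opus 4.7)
The proof has two parts: (a) showing $\mathcal{T}_k(G)$ is non-empty, and (b) bounding the score of any element of $\mathcal{T}_k(G)$.

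\smallskip

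For (a), I would start from any $k$-simple tree-decomposition $(T_0,\mathcal{B}_0)$ of $G$, which exists by hypothesis, and normalize it by iteratively contracting any edge $xy\in E(T_0)$ with $B_x\subseteq B_y$ (deleting $x$ and reattaching its other neighbours to $y$): this only decreases the total number of bags, so $k$-simplicity is preserved, and the resulting $(T_1,\mathcal{B}_1)$ is normal. If some bag of $\mathcal{B}_1$ has size $k+1$, root there and we are done. Otherwise every bag has size at most $k$, and I would construct a bag of size exactly $k+1$ by contracting a carefully chosen connected subtree $T^*\subseteq T_1$ into a single bag $B^*:=\bigcup_{x\in V(T^*)}B_x$. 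Such a $T^*$ exists because as one enlarges a subtree starting from a single bag (of size $\leq k$) to the whole tree (whose union is $V(G)$, of size $\geq k+1$), the union passes through every intermediate size, so by choosing $T^*$ minimal with $|B^*|\geq k+1$ one obtains $|B^*|=k+1$. Minimality plus normality (in particular, the uniqueness of occurrences of leaf-bags, which I would need to establish along the way) ensures $k$-simplicity is preserved under the contraction: any $k$-subset $S\subseteq B^*$ not previously contained in a bag of $T^*$ was in at most one other bag, so its new count is at most $2$.

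\smallskip

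For (b), since $|B_x|\leq k+1$ for every $x$, it suffices to show $\sum_{x\in V(T)}(\depth(x)+1)\leq |V(G)|(|V(G)|+1)$. The key bound is $|V(T)|\leq |V(G)|-k$. For each $v\in V(G)$, define its \emph{introducing node} $\iota(v)$ to be the unique shallowest node of the subtree $\Phi_{(T,\mathcal{B})}(\{v\})$; note $\iota(v)=r$ exactly when $v\in B_r$, so $|\iota^{-1}(r)|=|B_r|=k+1$. For any non-root node $x$ with parent $y$, normality gives some $v\in B_x\setminus B_y$; the subtree $\Phi(\{v\})$ is connected, contains $x$, and excludes $y$, hence is entirely contained in $T_x$, forcing $\iota(v)=x$. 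Thus every non-root node has at least one preimage under $\iota$, yielding $|V(G)|\geq (k+1)+(|V(T)|-1)$ and so $|V(T)|\leq |V(G)|-k$. Combining with $\depth(x)+1\leq |V(T)|$ for all $x$, we obtain
\[
\sum_{x\in V(T)}(\depth(x)+1)\;\leq\; |V(T)|^{2}\;\leq\;(|V(G)|-k)^{2}\;\leq\;|V(G)|(|V(G)|+1),
\]
where the last inequality (equivalent to $k^{2}\leq |V(G)|(2k+1)$) follows from $|V(G)|\geq k+1$. Multiplying by $(k+1)$ gives the desired score bound.

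\smallskip

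The main obstacle is the non-emptiness construction in the case $\stw(G)<k$, when the given $k$-simple tree-decomposition may have all bags of size at most $k$ and a bag of size $k+1$ must be manufactured. The delicate technical step is to verify that contracting the chosen subtree $T^*$ into the single larger bag $B^*$ does not push the count of some $k$-subset of $B^*$ from $2$ to $3$; this requires a case analysis that exploits the combinatorial constraints of a normal $k$-simple decomposition (bags are pairwise distinct, leaf-bags appear uniquely) together with the minimality of $T^*$.
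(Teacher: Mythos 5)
Your score bound (part (b)) is correct, and in fact a little sharper than what the paper uses: the introducing-node argument giving $|V(T)|\leq|V(G)|-k$ is a valid refinement (the paper just uses $|V(T)|\leq|V(G)|$ and $\depth(x)+1\leq|V(G)|+1$), and the arithmetic $(|V(G)|-k)^2\leq|V(G)|(|V(G)|+1)$ under $|V(G)|\geq k+1$ checks out.

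The non-emptiness construction has a genuine gap, however. You claim that as one grows a connected subtree $T^*$ from a single node to the whole tree, ``the union passes through every intermediate size,'' so a minimal $T^*$ with $|B^*|\geq k+1$ has $|B^*|=k+1$ exactly. This is false: adding a node $z$ to $T^*$ (with neighbour $w\in V(T^*)$) increases the union by $|B_z\setminus B_w|$, which can be larger than $1$. Concretely, take $k\geq 4$, two bags $B_1=\{1,\dots,k\}$ and $B_2=\{k-1,k,k+1,k+2\}$ on a two-node tree; this is normal and $k$-simple with all bags of size $\leq k$, but the only connected subtree with union of size $\geq k+1$ is the whole tree, whose union has size $k+2$. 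So the merged bag would have size $k+2$ and the result would not have width $k$. In addition, you explicitly flag but do not close the second step, namely verifying that the contraction keeps the decomposition $k$-simple; that verification is also nontrivial, since a $k$-subset $S\subseteq B^*$ could be contained in two bags outside $T^*$ while never lying entirely inside a single bag of $T^*$, which would push its count to $3$.

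The paper avoids all of this by a different extremal choice: take a $k$-simple tree-decomposition that minimizes the number of bags and, subject to that, maximizes the width. Minimality of bag count forces normality (contracting an edge $xy$ with $B_x\subseteq B_y$ strictly decreases $|\mathcal{B}|$). If every bag still has size $\leq k$, pick a largest bag $B_x$; since $|V(G)|\geq k+1$ it has a tree-neighbour $y$, and normality gives $v\in B_y\setminus B_x$. Adding $v$ to $B_x$ yields another $k$-width $T$-decomposition; if it is $k$-simple this contradicts width maximality, and if not, the offending $k$-set $S$ must lie in two bags other than $B_x$, each of size $\leq k$, forcing $B_y=B_z=S$ and contradicting normality. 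This produces a bag of size $k+1$ to root at without any subtree merging. I recommend replacing your non-emptiness argument with this (or supplying a correct replacement for the ``intermediate size'' step and the unverified $k$-simplicity preservation).
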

\begin{proof}
    We first prove that $\mathcal{T}_k(G)$ is non-empty. Let $(T, \mathcal{B})$ be a $k$-simple tree-decomposition of $G$ such that:
    \begin{enumerate}[label=\textbf{(C\arabic*):},leftmargin=4em]
        \item $|\mathcal{B}|$ is minimised;
        \item subject to (C1), the width of $(T,\mathcal{B})$ is maximised.
    \end{enumerate}
    If $(T, \mathcal{B})$ is not normal, then there is an edge $xy \in E(T)$ such that $B_x \subseteq B_y$. We obtain a new tree-decomposition with a smaller number of bags by contracting the edge $xy$ into $y$ and discarding the bag $B_x$, which contradicts (C1). Thus, $(T,\mathcal{B})$ is normal.

    Let $B_x \in \mathcal{B}$ be a bag of maximum size and suppose $\left|B_x\right| \leq k$. Then each bag in $\mathcal{B}$ has size at most $k$. Since $|V(G)| \geq k+1$, we deduce that $|V(T)| \neq 1$, and therefore there is a neighbour $y$ of $x$ in $T$. Since $(T, \mathcal{B})$ is normal, there exists $v \in B_y \setminus B_x$. Construct a $T$-decomposition $\mathcal{B}'$ by adding $v$ to $B_x$ and leaving all other bags unmodified. If $(T, \mathcal{B}')$ is $k$-simple then it contradicts (C2), so $(T, \mathcal{B}')$ is not $k$-simple. Therefore, there is a set $S \subseteq V(G)$ of $k$ vertices contained in three bags of $\mathcal{B}'$. Two of these bags are different from $x$; let $y, z \in V(T)$ be distinct nodes such that $x \neq z, x \neq y$, $S \subseteq B_y$ and $S \subseteq B_z$. However, $|B_y|, |B_z| \leq k$, so $B_y = B_z = S$. This contradicts the normality of $(T, \mathcal{B})$. Thus, $\left|B_x\right| = k+1$. The rooted tree-decomposition $(T, \mathcal{B}, x)$ is $k$-fine, and therefore $(T, \mathcal{B}, x) \in \mathcal{T}_k(G)$.

    For the score bound, observe that the number of bags of any normal tree-decomposition of $G$ is at most $|V(G)|$. If $(T, \mathcal{B}, r) \in \mathcal{T}_k(G)$, then
    \begin{equation*}
        \score((T, \mathcal{B}, r))\leq \sum_{x \in V(T)} (k+1) (|V(G)| + 1) \leq (k+1) \cdot |V(G)| \cdot (|V(G)| + 1),
    \end{equation*}
    as desired.
\end{proof}
Observe that \cref{lem:well_def} implies that there is a rooted tree-decomposition $(T, \mathcal{B}, r)$ in $\mathcal{T}_k(G)$ maximising the score.
\begin{lem}
    \label{lem:rooted_score}
    Let $k \geq 1$ be an integer. Suppose $G$ is a graph with $\left|V(G)\right| \geq k+1$ and $\stw(G) \leq k$. Let $(T, \mathcal{B}, r)$ be a rooted tree-decomposition in $\mathcal{T}_k(G)$ maximising the score. Let $y \in V(T)$, $x$ be a child of $y$, and let $S := B_x \cap B_y$. Then no child $z \neq x$ of $y$ has $S \subseteq B_z$.
\end{lem}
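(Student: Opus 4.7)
The plan is to argue by contradiction using the maximality of the score via an exchange argument. Suppose there is a child $z \neq x$ of $y$ with $S \subseteq B_z$. I would construct a new tree-decomposition $(T', \mathcal{B}, r)$ from $(T, \mathcal{B}, r)$ by detaching the subtree $T_x$ from $y$ and reattaching it at $z$ — that is, delete the edge $xy$, add the edge $xz$, and leave every bag unchanged. Since $z \in V(T) \setminus V(T_x)$ (as $z$ is a distinct child of $y$), $T'$ is still a tree, and its root is still $r$.

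The first task is to verify that $(T', \mathcal{B})$ is a valid tree-decomposition. Every edge of $G$ is still realised in some bag since the bags are untouched, so only the connectivity of $\Phi_{(T', \mathcal{B})}(\{v\})$ needs checking for each $v \in V(G)$. I would split into cases by whether $v \in S$: if $v \notin S$, then the subtree $\Phi_{(T, \mathcal{B})}(\{v\})$ either lies entirely inside $V(T_x)$ or entirely outside $V(T_x)$ (otherwise it would contain both $x$ and $y$, forcing $v \in S$), and in either case it remains connected in $T'$ because the modification only touches edges incident to $x$. If $v \in S$, then by hypothesis $v \in B_z$ as well, so $z \in \Phi_{(T',\mathcal{B})}(\{v\})$ acts as a bridge: the part of the subtree inside $V(T_x)$ is connected to the part outside through the new edge $xz$ and the bag $B_z$.

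Next, I would confirm that $(T', \mathcal{B}, r) \in \mathcal{T}_k(G)$. Normality, width at most $k$, $k$-simplicity, and $k$-fineness (i.e.\ $|B_r| = k+1$) are all properties of the collection of bags together with the identity of the root, so they transfer to $(T', \mathcal{B}, r)$ automatically. Finally, I would compare the scores. Every vertex $w \in V(T_x)$ has its depth increase by exactly $1$ in $T'$ (the root-path now traverses $y \to z \to x \to \cdots$ instead of $y \to x \to \cdots$), while depths of nodes outside $V(T_x)$ are unchanged. Hence
\begin{equation*}
    \score((T', \mathcal{B}, r)) - \score((T, \mathcal{B}, r)) \;=\; \sum_{w \in V(T_x)} |B_w| \;>\; 0,
\end{equation*}
contradicting the maximality of the score and completing the proof.

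The main obstacle is the connectivity check for the modified tree-decomposition in the case $v \in S$: the hypothesis $S \subseteq B_z$ is used precisely here, as it provides the bridge node $z$ that keeps $\Phi_{(T', \mathcal{B})}(\{v\})$ connected after severing the edge $xy$. Everything else reduces to routine accounting, since the bags and the root are unchanged and the only depths that move do so by exactly one.
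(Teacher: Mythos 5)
Your proof is correct and follows essentially the same approach as the paper: detach $T_x$ from $y$, reattach it at $z$, use $S \subseteq B_z$ to preserve the connectivity condition, observe that all other properties depend only on the bags and the root, and note that every node in $T_x$ gains depth $1$, increasing the score. You simply spell out the connectivity check and the score bookkeeping in more detail than the paper does.
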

\begin{proof}
    Suppose $z \neq x$ is a child of $y$ and $S \subseteq B_{z}$. Consider the tree $T'$ obtained from $T$ by removing the edge $xy$ and adding the edge $xz$. We claim that $(T', \mathcal{B})$ is a tree-decomposition of $G$.
    It suffices to show that for each vertex $v \in V(G)$, $\Phi_{(T', \mathcal{B})}(\{v\})$ induces a connected subtree of $T'$. This holds because $B_x \cap B_y \subseteq B_z$.
    Since $\depth(z) > \depth(y)$, the depth of each node in $T_x$ increases by $1$. Since $\mathcal{B}$ remains unmodified, we have $(T', \mathcal{B}, r) \in \mathcal{T}_k(G)$, and $(T', \mathcal{B}, r)$ has a higher score than $(T, \mathcal{B}, r)$, a contradiction.
\end{proof}
\begin{lem}
    \label{lem:sizes}
    Let $k \geq 1$ be an integer. Suppose $G$ is a graph with $\left|V(G)\right| \geq k+1$ and $\stw(G) \leq k$. Let $(T, \mathcal{B}, r)$ be a rooted tree-decomposition in $\mathcal{T}_k(G)$ maximising the score. Then each bag of $\mathcal{B}$ has size $k+1$.
\end{lem}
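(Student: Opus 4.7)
I will argue by contradiction: suppose some bag $B_x$ has $|B_x| \le k$, and choose such an $x$ of minimum depth. Since $(T, \mathcal{B}, r)$ is $k$-fine, $|B_r|=k+1$, so $x \ne r$; let $y$ be the parent of $x$. The minimality of $\depth(x)$ forces $|B_y|=k+1$, and likewise $|B_p|=k+1$ for the grandparent $p$ of $x$ (if $y \ne r$). Writing $A := B_x \cap B_y$, normality gives $B_x \not\subseteq B_y$, so $|A| \le k-1$ and hence $|B_y \setminus B_x| = (k+1) - |A| \ge 2$. The plan is to choose $v \in B_y \setminus B_x$ appropriately, then form $(T, \mathcal{B}')$ by replacing $B_x$ with $B'_x := B_x + v$. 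Since the score strictly increases by $\depth(x)+1$, it suffices to verify that $(T, \mathcal{B}', r) \in \mathcal{T}_k(G)$.

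The tree-decomposition axioms and the width bound $k+1$ are immediate ($v \in B_y$ keeps $\Phi(\{v\})$ connected after attaching $x$ to it). For normality, any $v$ works: if $B_z \subseteq B'_x$ for some $z \ne x$, then $v \in B_z$ (else $B_z \subseteq B_x$, violating original normality), so $\{y,z\} \subseteq \Phi(\{v\})$. Because $v \notin B_x$ gives $x \notin \Phi(\{v\})$, connectedness of $\Phi(\{v\})$ (\cref{lem:connected}) forces $\Phi(\{v\}) \cap T_x = \varnothing$, so $z \notin T_x$; then the $xz$-path in $T$ passes through $y$, and \cref{lem:connected} applied to $\Phi'(B_z)$ yields $B_y \supseteq B_z$, which by original normality collapses to $z = y$. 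But $B_y \subseteq B_x + v$ would force $|B_y \setminus B_x| \le 1$, contradicting $|B_y \setminus B_x| \ge 2$.

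The main obstacle is the $k$-simplicity condition, which is where the choice of $v$ actually matters. The dangerous $k$-subsets of $B'_x$ all contain $v$, so they have the form $S = (B_x - u) + v$ with $u \in B_x$, and each requires $|\Phi(S)| \le 1$ in the original $(T,\mathcal{B})$. If $B_x - u \not\subseteq B_y$, the same path-and-connectedness argument gives $\Phi(B_x - u) \subseteq T_x$, while $\Phi(\{v\}) \cap T_x = \varnothing$, so $\Phi(S) = \varnothing$. This handles every $(u,v)$ except when $|A|=k-1$ (so $|B_x|=k$), $u$ is the unique element $u^* \in B_x \setminus B_y$, and $S=A+v$. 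In this residual case $y \in \Phi(A+v)$ is automatic; any other bag in $\Phi(A+v)$ must be adjacent to $y$ (by connectedness) and must contain $A=B_x \cap B_y$, so by \cref{lem:rooted_score} it cannot be a child of $y$, leaving only the grandparent $p$ (trivial if $y=r$). Writing $B_y \setminus B_x = \{v_1, v_2\}$: if both $v_1, v_2 \in B_p$, then $B_p \supseteq A+v_1+v_2 = B_y$, and equal cardinality $k+1$ forces $B_p = B_y$, violating normality. Hence at least one $v_j \notin B_p$; taking $v := v_j$ gives $\Phi(A+v) = \{y\}$, completing the contradiction.
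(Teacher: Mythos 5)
Your proof takes essentially the same approach as the paper: pick a minimum-depth small bag $B_x$ with parent $y$, enlarge $B_x$ by a vertex $v \in B_y \setminus B_x$, verify normality and the tree-decomposition axioms, observe that the only $k$-set at risk of appearing in three bags is $(B_x \cap B_y)+v$ when $|B_x|=k$, and invoke \cref{lem:rooted_score} to rule out the children of $y$ at the crucial step. The paper phrases this as a contradiction over all choices of $v$ (assuming every $\mathcal{B}_v$ fails to be $k$-simple and extracting structure from the failures), whereas you pick a good $v$ directly; these are two formulations of the same argument, and your direct version is arguably cleaner.

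There is one small logical gap in the residual case. The deduction ``if both $v_1, v_2 \in B_p$, then $B_p \supseteq A + v_1 + v_2 = B_y$'' silently assumes $A \subseteq B_p$, which you have not established; $v_1,v_2 \in B_p$ alone does not give $B_p \supseteq B_y$. The repair is trivial: if $A \not\subseteq B_p$ then $p \notin \Phi(A+v)$ for every $v$, so any choice of $v$ gives $\Phi(A+v)=\{y\}$; if $A \subseteq B_p$ then your argument applies verbatim. A second, purely cosmetic point: parametrising the dangerous $k$-subsets as $(B_x-u)+v$ with $u\in B_x$ presupposes $|B_x|=k$. When $|B_x|<k$ the only possible $k$-subset of $B_x'$ is $B_x'$ itself (and only when $|B_x|=k-1$), and $\Phi'(B_x')=\{x\}$ since $B_x' \subseteq B_z$ for $z\ne x$ would force $B_x\subseteq B_z$ against normality; so nothing breaks, but it would be worth saying so.
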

\begin{proof}
    Suppose there is a node $x \in V(T)$ such that $|B_x| \leq k$. Let $x$ be the node closest to $r$. Since $\left|B_r\right| = k+1$, $x \neq r$. Let $y$ be the parent of $x$, so $\left|B_y\right| = k+1$. If $\left|B_y \setminus B_x\right| = 1$, then $B_x \subseteq B_y$, contradicting the assumption that $(T, \mathcal{B})$ is normal. Therefore, $\left|B_y \setminus B_x\right| \geq 2$.

    Suppose $\mathcal{B}= (B_t : t \in V(T))$. For each vertex $v \in B_y \setminus B_x$, we construct a $T$-decomposition of $G$ denoted by $\mathcal{B}_v := (B'_t : t \in V(T))$, where $B_x':= (B_x + v)$, and all other bags are unmodified.
    We claim that $(T, \mathcal{B}_v)$ is a normal tree-decomposition of $G$.  Since all bags apart from $B_x$ are unmodified, it suffices to check that for each neighbour $z$ of $x$ in $T$, $B'_{x} \not\subseteq B_z$ and $B_z \not\subseteq B'_{x}$. Since $B_x \not\subseteq B_z$, $B'_{x} \not\subseteq B_z$.
    To show that $B_z \not\subseteq B_x'$, consider two cases. If $z \neq y$, then since $(T, \mathcal{B})$ is normal, we have $B_z \not\subseteq B_x$, and there is a vertex $w \in B_z \setminus B_x$. By \cref{lem:connected} and the fact that $v \notin B_x$, $v \neq w$. Therefore, $w \in B_z \setminus (B_x + v)$ and $B_z \not\subseteq B'_{x}$. If $z = y$, then since $\left|B_y \setminus B_x\right| \geq 2$, there is a vertex $w\in B_y \setminus B_x$ with $w \neq v$, so $B_y \not\subseteq B'_{x}$. So $(T, \mathcal{B}_v)$ is a normal tree-decomposition for each vertex $v \in B_y \setminus B_x$.

    If $(T, \mathcal{B}_v)$ is $k$-simple, then $(T, \mathcal{B}_v, r)$ is in $\mathcal{T}_k(G)$, and $(T, \mathcal{B}_v, r)$ is a rooted tree-decomposition that attains a higher score than $(T, \mathcal{B}, r)$, a contradiction. Therefore, we deduce that for each $v \in B_y \setminus B_x$, $(T, \mathcal{B}_v)$ is not $k$-simple, and there is a subset $S'_v \subseteq B_x$ of $k-1$ vertices such that $S'_v + v$ is contained in three bags of $\mathcal{B}_v$; that is,  $|\Phi_{(T, \mathcal{B}_v)}(S'_v + v)| = 3$. Since $S'_v + v \subseteq B_x'$, $x \in \Phi_{(T, \mathcal{B}_v)}(S'_v + v)$. Since $v \notin B_x$, by \cref{lem:connected}, $x$ is the only vertex in $V(T_x)$ in $\Phi_{(T, \mathcal{B}_v)}(S'_v + v)$. Therefore, by \cref{lem:connected}, $y \in \Phi_{(T, \mathcal{B}_v)}(S'_v + v)$.
    Therefore, $S'_v \subseteq B_y$, and $S'_v \subseteq B_x \cap B_y$.  Since $\left| S'_v\right| = k-1$, $|B_x \cap B_y| \geq k-1$. However, $\left|B_y \setminus B_x\right| \geq 2$, so $\left|B_x \cap B_y\right| = k-1$ and $B_x \cap B_y = S'_v$. All $S_v'$ are equal, and we define $S':= B_x \cap B_y$. Since $(T, \mathcal{B})$ is normal, we have $B_x \setminus B_y \neq \varnothing$, and $\left|B_x\right| = k$.

    Let $v_1, v_2 \in B_y \setminus B_x$ be distinct. For each $i \in \{1, 2\}$, because $|\Phi_{(T, \mathcal{B}_v)}(S'_v + v)| = 3$, we let $\Phi_{(T, \mathcal{B}_v)}(S'_v + v) = \{x, y, z_i\}$. By \cref{lem:connected}, $z_1$ and $z_2$ are adjacent to $y$. If $z_1 = z_2$, then since $v_1, v_2 \in B_{z_1}$ and $S \subseteq B_{z_1}$, we have $B_{z_1} = B_{y}$, contradicting the normality of $(T, \mathcal{B})$. Hence, $z_1 \neq z_2$. Since in a rooted tree each non-root vertex has a unique parent, there is an index $i \in \{ 1, 2 \}$ such that $\depth(z_i) > \depth(y)$. However, then $z_i \neq x$ is a child of $y$ with $B_x \cap B_y \subseteq B_{z_i}$, contradicting \cref{lem:rooted_score}.
\end{proof}

\begin{lem}
    \label{lem:symmetric_diff}
    Let $k \geq 1$ be an integer. Suppose $G$ is a graph with $\left|V(G)\right| \geq k+1$ and $\stw(G) \leq k$. Let $(T, \mathcal{B}, r)$ be a rooted tree-decomposition in $\mathcal{T}_k(G)$ maximising the score. Then for each edge $xy \in E(T)$, we have $\left|B_y \setminus B_x\right| = 1$ and $|B_x \setminus B_y| = 1$.
\end{lem}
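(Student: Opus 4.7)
Since \cref{lem:sizes} forces every bag to have size exactly $k+1$, we have $|B_x|=|B_y|=k+1$ for every edge $xy\in E(T)$, and hence $|B_x\setminus B_y|=|B_y\setminus B_x|$. So it suffices to prove $|B_x\setminus B_y|=1$. Normality already gives $|B_x\setminus B_y|\geq 1$, so I will argue by contradiction: assume $s:=|B_x\cap B_y|\leq k-1$ for some edge $xy\in E(T)$, and without loss of generality assume $y$ is the parent of $x$ in the rooted tree.

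The idea is to exhibit a subdivision of the edge $xy$ that yields a rooted tree-decomposition in $\mathcal{T}_k(G)$ with strictly larger score, contradicting maximality. Inserting any new node $z$ on the edge $xy$ (making $z$ the parent of $x$ and a child of $y$) increases $\depth(w)$ by $1$ for every $w\in V(T_x)$, and adds the positive term $(\depth(z)+1)|B_z|$ to the score. So once I produce a valid bag $B_z$ for $z$, the score strictly increases.

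Concretely, pick $v\in B_x\setminus B_y$ and $u\in B_y\setminus B_x$ and define $B_z:=(B_x\setminus\{v\})\cup\{u\}$, which has size $k$. Because $v\notin B_y$, we have $B_x\cap B_y\subseteq B_z$, which is exactly what is needed for the subdivision to remain a tree-decomposition. Normality is then easy: $v\in B_x\setminus B_z$ and $u\in B_z\setminus B_x$ handle the pair $(B_x,B_z)$; $|B_y|>|B_z|$ gives $B_y\not\subseteq B_z$; and $B_z\setminus B_y\supseteq (B_x\setminus B_y)\setminus\{v\}\neq\varnothing$ since $|B_x\setminus B_y|=k+1-s\geq 2$. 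The $k$-fine condition is preserved because $B_r$ is untouched, and the width stays $\leq k$.

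The only delicate verification is $k$-simplicity. Since $|B_z|=k$, the only $k$-subset of $B_z$ is $B_z$ itself, so it suffices to show $|\Phi_{(T,\mathcal{B})}(B_z)|\leq 1$; in fact I will show it is $0$. The key observation is that $B_z$ contains both $u\in B_y\setminus B_x$ and some $v'\in (B_x\setminus B_y)\setminus\{v\}$, where $v'$ exists precisely because $|B_x\setminus B_y|\geq 2$. By \cref{lem:connected}, in $T$ the bags containing $u$ form a connected subtree that avoids $x$ (since $u\notin B_x$), hence lies entirely on the $y$-side of the edge $xy$; symmetrically, the bags containing $v'$ lie on the $x$-side. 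No bag of $T$ can contain both $u$ and $v'$, so $\Phi_{(T,\mathcal{B})}(B_z)=\varnothing$ and $\Phi_{(T',\mathcal{B}')}(B_z)=\{z\}$, which is fine. This completes the check; the subtle point is that the choice of $v'$ relies on $|B_x\setminus B_y|\geq 2$, which is exactly our standing assumption $s\leq k-1$.
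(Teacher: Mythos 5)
Your construction has a basic arithmetic error that unravels the key step. You define $B_z := (B_x\setminus\{v\})\cup\{u\}$ with $v\in B_x\setminus B_y$ and $u\in B_y\setminus B_x$, and claim $|B_z|=k$. But by \cref{lem:sizes} we have $|B_x|=k+1$, so $|B_x\setminus\{v\}|=k$, and since $u\notin B_x$, adding $u$ gives $|B_z|=k+1$, not $k$. (The same slip also undermines your stated reason for $B_y\not\subseteq B_z$, which was ``$|B_y|>|B_z|$'' --- in fact $|B_y|=|B_z|$, though the conclusion $B_y\not\subseteq B_z$ can be salvaged by noting $(B_y\setminus B_x)\setminus\{u\}\neq\varnothing$.)

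This matters because your entire $k$-simplicity argument hinges on $|B_z|=k$, so that $B_z$ is its own only $k$-subset. With $|B_z|=k+1$ there are $k+1$ many $k$-subsets $B_z-w$, and showing $\Phi_{(T,\mathcal{B})}(B_z)=\varnothing$ via \cref{lem:connected} is not enough. The dangerous cases are exactly the ones your argument silently skips: for $w=u$ we get $B_z-u=B_x\setminus\{v\}\subseteq B_x$, and if this set already lies in two bags of $(T,\mathcal{B})$ then adding $z$ makes three; similarly, when $|B_x\setminus B_y|=2$ and $w=v'$ we get $B_z-v'=(B_x\cap B_y)+u\subseteq B_y$, with the same risk. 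These are precisely the obstructions the paper's proof is designed to handle: it uses the bag $B_z'=(B_x\cap B_y)+u+v$, first shows that the only possible violation forces $|B_x\cap B_y|=k-1$, and then runs a separate argument (choosing $u$ and $v$ carefully, invoking \cref{lem:rooted_score} and a tree re-rooting that compares subtree sizes) to rule out that case. So the approach of subdividing $xy$ and scoring is the right instinct, but producing a bag $B_z$ for which $k$-simplicity is actually preserved is the crux, and it is not addressed once the size of $B_z$ is corrected.
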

\begin{proof}
    Without loss of generality, suppose $\depth(x) > \depth(y)$. Suppose for contradiction that $\left|B_y \setminus B_x\right| \geq 2$. By \cref{lem:sizes}, $\left|B_x\right| = \left|B_y\right| = k+1$. Therefore, $|B_x \cap B_y| \leq k-1$ and $|B_x \setminus B_y| \geq 2$.

    Let $T'$ be the tree obtained from $T$ by replacing the edge $xy$ with a $2$-edge path $xzy$. Suppose $\mathcal{B}= (B_t : t \in V(T))$. For each $u \in B_x \setminus B_y$ and $v \in B_y \setminus B_x$, we construct a $T'$-decomposition of $G$ given by $\mathcal{B}_{uv}:= ( B'_t: t \in V(T') )$, where $B_z' := ((B_x \cap B_y) + u + v)$, and all other bags are unmodified.

    We claim that $(T', \mathcal{B}_{uv})$ is a normal tree-decomposition of $G$ with width at most $k$. To see that $(T', \mathcal{B}_{uv})$ is a tree-decomposition, we check that for each vertex $g \in V(G)$, the subtree of $T'$ induced by $\Phi_{(T', \mathcal{B}_{uv})}(\{g\})$ is connected. This is because if $g  \in B_x$ and $g \in B_y$, then $g \in B_z'$. The other properties of $(T', \mathcal{B}_{uv})$ being a tree-decomposition are straightforward to verify. Since $\left|B_x \setminus B_y\right| \geq 2$ and $\left|B_y \setminus B_x\right| \geq 2$, $B_x \not \subseteq B_z'$ and $B_y \not \subseteq B_z'$. Since $v \notin B_x$ and $u \notin B_y$, we have $B_z' \not \subseteq B_x$ and $B_z' \not \subseteq B_y$, hence $(T', \mathcal{B}_{uv})$ is normal. $(T', \mathcal{B}_{uv})$ has width at most $k$ because $|B_z'| = \left|B_x \cap B_y\right| + 2 \leq k+1$.

    Observe that the depth of each vertex in the subtree rooted at $x$ increases by $1$ in $T'$. If $(T', \mathcal{B}_{uv})$ is $k$-simple, then $(T', \mathcal{B}_{uv}, r)$ is in $\mathcal{T}_k(G)$ and has a higher score than $(T, \mathcal{B}, r)$, a contradiction. Thus, $(T', \mathcal{B}_{uv})$ is not $k$-simple, and there is a set $S \subseteq V(G)$ of size $k$ such that $|\Phi_{(T', \mathcal{B}_{uv})}(S)| = 3$. If $\left|B_x \cap B_y\right| < k-1$, then $|B_z'| \leq k$ and we deduce that $S = B_z'$ and $u, v \in B_z'$. However, $S \not \subseteq B_x$ and $S \not \subseteq B_y$ since $u \notin  B_y$ and $v \notin B_x$. Then $|\Phi_{(T', \mathcal{B}_{uv})}(S)| = 3$ contradicts \cref{lem:connected}. Therefore, $\left|B_x \cap B_y\right| = k-1$, $|B_x| = |B_y| = k+1$, and $\left|B_y \setminus B_x\right| = \left|B_x \setminus B_y\right| = 2$.

    We claim that there is a vertex $v \in B_y \setminus B_x$ such that if $S:= (B_x \cap B_y) + v$, then $|\Phi_{(T, \mathcal{B})}(S)| \leq 1$.
    Since $\left|B_y \setminus B_x\right| = 2$, there are distinct vertices $v_1, v_2 \in B_y \setminus B_x$. For each $i \in \{ 1, 2 \}$, let $S_i= (B_x \cap B_y) + v_i$. Suppose that $|\Phi_{(T, \mathcal{B})}(S_i)| \geq 2$ for each $i \in \{1, 2\}$. By \cref{lem:connected}, there exist nodes $z_1, z_2 \in V(T)$ adjacent to $y$ such that $S_1 \subseteq B_{z_1}$ and $S_2 \subseteq B_{z_2}$. If $z_1 = z_2$, then $v_1, v_2 \in B_{z_1}$, and $B_{z_1} = (B_x \cap B_y) + v_1 + v_2 = B_y$, a contradiction. Further, because $v_1 \notin B_x$ and $v_2 \notin B_x$, then $z_1 \neq x$ and $z_2 \neq x$. Hence, there exists $i \in \{ 1, 2 \}$ such that $\depth(z_i) > \depth(y)$. However, then $B_x \cap B_y \subseteq B_{z_i}$, contradicting \cref{lem:rooted_score}. Therefore, there is a vertex $v \in B_y \setminus B_x$ such that $|\Phi_{(T, \mathcal{B})}((B_x \cap B_y) + v)| \leq 1$.

    We claim that there is a vertex $u \in B_x \setminus B_y$ such that if $S:= (B_x \cap B_y) + u$, then $|\Phi_{(T, \mathcal{B})}(S)| \leq 1$.
    Since $\left|B_x \setminus B_y\right| = 2$, there are distinct vertices $u_1, u_2 \in B_x \setminus B_y$. For each $i \in \{ 1, 2 \}$, let $S_i= (B_x \cap B_y) + u_i$. Suppose that $|\Phi_{(T, \mathcal{B})}(S_i)| \geq 2$ for each $i \in \{1, 2\}$. By \cref{lem:connected}, there exist nodes $z_1, z_2$ adjacent to $x$ such that $S_1 \subseteq B_{z_1}$ and $S_2 \subseteq B_{z_2}$. If $z_1 = z_2$, then $u_1, u_2 \in B_{z_1}$ and $B_{z_1} = (B_x \cap B_y) + u_1 + u_2 = B_x$, a contradiction. Further, because $u_1 \notin B_y$ and $u_2 \notin B_y$, then $z_1 \neq y$ and $z_2 \neq y$. Therefore, for each $i \in \{ 1, 2 \}$, $\depth(z_i) > \depth(x)$. Let $T_i$ be the subtree rooted at $z_i$. Without loss of generality, assume that $\left|V(T_1)\right| \geq \left|V(T_2)\right|$.

    Consider the tree $T^*$ obtained from $T$ by removing the edge $xy$ and adding the edge $yz_2$. To see that $(T^*, \mathcal{B})$ is a valid tree-decomposition of $G$, we show that for each vertex $g \in V(G)$, the subtree of $T^*$ induced by $\Phi_{(T^*, \mathcal{B})}(\{g\})$ is connected. It suffices to show that if $v \in B_x$ and $v \in B_{y}$, then $v \in B_{z_2}$, which is true since $B_x \cap B_y \subseteq B_{z_2}$.

    The depth of each node in $V(T_2)$ decreases by $1$, while the depth of $x$ and each node in $V(T_1)$ increases by $1$. Consequently,
    \begin{align*}
        \score((T^*, \mathcal{B}, r)) & \geq \score((T, \mathcal{B}, r)) + (|V(T_1)| + 1)(k+1) - |V(T_2)|(k+1).
    \end{align*}
    Observe that $(T^*, \mathcal{B}, r)$ is normal, $k$-fine and $k$-simple, so $(T^*, \mathcal{B}, r)$ is in $\mathcal{T}_k(G)$.
    Since $\left|V(T_1)\right| \geq \left|V(T_2)\right|$, $(T^*, \mathcal{B}, r)$ attains a higher score than $(T, \mathcal{B}, r)$, a contradiction. Therefore, there is a vertex $u \in B_x \setminus B_y$ such that $|\Phi_{(T, \mathcal{B})}((B_x \cap B_y) + u)| \leq 1$. However, then $(T', \mathcal{B}_{uv})$ is $k$-simple, which is a contradiction since we have previously shown that $(T', \mathcal{B}_{uv})$ is not $k$-simple. This completes the proof.
\end{proof}

\thmstw*

\begin{proof}
    (1) $\Rightarrow$ (2): Let $(T, \mathcal{B}, r)$ be a rooted tree-decomposition in $\mathcal{T}_k(G)$ maximising the score. By \cref{lem:sizes,lem:symmetric_diff}, $(T, \mathcal{B})$ satisfies (2).

    (2) $\Rightarrow$ (3): Let $(T, \mathcal{B})$ be a normal $k$-simple $k$-smooth tree-decomposition of $G$. Root $T$ at a node $t_1 \in V(T)$. Suppose $B_{t_1} = \{v_1, \dots, v_{k+1} \}$, and let $t_1 \preceq \dots \preceq t_{|V(T)|}$ be a BFS-ordering of $V(T)$. For a non-root node $t_i \in V(T)$, let \defn{$p(t_i)$} denote the parent of $t_i$ in $T$. Since $(T, \mathcal{B})$ is $k$-smooth, for each non-root node $t_i \in V(T)$, there is a vertex $v_{t_i}$ such that $\{v_{t_i}\} = B_{t_i} \setminus B_{p(t_i)}$. Define $G'$ by adding edges to $G$ so that each bag is a $(k+1)$-clique. Note that $G$ is a spanning subgraph of $G'$.

    We claim that $G'$ is a $k$-tree. $G'$ can be constructed by  starting from a $(k+1)$-clique on the vertices $\{v_1, \dots, v_{k+1}\}$. In the order of the BFS-ordering, for each non-root node $t_i \in V(T)$ ($i \geq 2$), we recursively add the vertex $v_{t_i} \in B_{t_i} \setminus B_{p(t_i)}$ to $G'$ so that $v_{t_i}$ is adjacent to all vertices in the $k$-clique $B_{t_i} \cap B_{p(t_i)}$. For each non-root node $t_i \in V(T)$, we have $p(t_i) \prec t_i$, so the construction is well-defined.

    We claim that $G'$ is a simple $k$-tree. Suppose to the contrary. Then there is a $k$-clique $C$ used at least twice in the construction of $G'$. Suppose $C$ is used at steps $i$ and $j$ for some $i \neq j$. Note that $i, j \geq 2$. By construction of $G'$, $B_{t_i} = C + v_{t_i}$, $B_{t_j} = C + v_{t_j}$ and note that $\{v_{t_i}\} = B_{t_i} \setminus B_{p(t_i)}$ and $\{v_{t_j}\} = B_{t_j} \setminus B_{p(t_j)}$. Since $(T, \mathcal{B})$ is $k$-smooth, $|B_{t_i} \cap B_{p(t_i)}| = |B_{t_j} \cap B_{p(t_j)}| = k$, so $C = B_{t_i} \cap B_{p(t_i)} = B_{t_j} \cap B_{p(t_j)}$. Consequently, $C \subseteq V(G)$ is a set of $k$ vertices contained in $B_{t_i}, B_{t_j}, B_{p(t_i)}, B_{p(t_j)} \in \mathcal{B}$. Since $i \neq j$, at least three of these bags are distinct, contradicting the assumption that $(T, \mathcal{B})$ is $k$-simple.

    (3) $\Rightarrow$ (1): It suffices to show that every simple $k$-tree $G$ has a $k$-simple tree-decomposition. Suppose $G$ is constructed from a $(k+1)$-clique on the vertices $\{v_1, \dots, v_{k+1}\}$, and then by adding each of the remaining vertices $v_{k+2}, \dots, v_{|V(G)|}$ in order.
    Let the simple $k$-tree obtained at the $i$-th step of this process be $G_i$. We recursively construct a tree-decomposition $(T_i, \mathcal{B}_i)$ of $G_i$. When $i = 1$, $T_1$ is the tree defined by $V(T_1) = \{t_1\}$ and $E(T_1) = \varnothing$. Let $\mathcal{B}_1$ be the $T_1$-decomposition of $G_1$ defined by $B_{t_1} := \{v_1, \dots, v_{k+1}\}$. For $i \geq 2$, suppose $v_{k+i} \in V(G_i) \setminus V(G_{i-1})$ has neighbourhood $C$, where $C$ is a $k$-clique in $V(G_{i-1})$. Let $y \in V(T_{i-1})$ be a node such that $C \subseteq B_y$. We construct $T_i$ from $T_{i-1}$ by adding a new node $t_i$ adjacent to $y$, and we define $\mathcal{B}_i$ to be the $T_i$-decomposition of $G_i$ obtained by adding the bag $B_{t_i}:= C + v_{k+i}$ to $\mathcal{B}_{i-1}$.
    Let $(T, \mathcal{B})$ be the tree-decomposition of $G$ obtained at the end of this process, and root $T$ at the node $t_1$. By construction, $(T, \mathcal{B})$ is $k$-smooth, and for each non-root node $t_i \in V(T)$, the vertex $v_{k+i} \in B_{t_i} \setminus B_{p(t_i)}$ is adjacent to each vertex in $B_{t_i} \cap B_{p(t_i)}$ in $G$.

    Suppose $(T, \mathcal{B})$ is not $k$-simple. Then there is a set $S \subseteq V(G)$ of $k$ vertices contained in three bags of $\mathcal{B}$. Since each bag $B_{t_i} \in \mathcal{B}$ induces a $(k+1)$-clique in $G$, $S$ is a $k$-clique in $G$. Let $t_{a}, t_{b}, t_{c} \in V(T)$ be distinct nodes such that $S \subseteq B_{t_a}$, $S \subseteq B_{t_b}$ and $S \subseteq B_{t_c}$ (so $a, b, c \geq 1$ are distinct integers). By \cref{lem:connected}, we may without loss of generality assume that $t_a, t_b$ and $t_c$ induce a $3$-vertex-path in $T$. We may further assume either I) $t_a = p(t_b)$ and $t_a = p(t_c)$ or II) $t_a = p(t_b)$ and $t_b = p(t_c)$. In either case, $B_{p(t_b)}$ and $B_{p(t_c)}$ contain $S$, and the $k$-clique $S$ is used at least twice (by $v_{k+b} \in B_{t_b} \setminus B_{p(t_b)}$ and $v_{k+c} \in B_{t_c} \setminus B_{p(t_c)}$) in the construction of $G$, a contradiction.
\end{proof}
\end{document}